\documentclass{amsart}
\usepackage{amssymb,latexsym}
\usepackage{amscd,amsthm}

\usepackage[all]{xy}
\usepackage{tikz}

\usepackage{tikz-cd}

\newtheorem{theorem}{Theorem}[section]
\newtheorem{them}{Theorem}

\newtheorem{lemma}[theorem]{Lemma}
\newtheorem{proposition}[theorem]{Proposition}
\newtheorem{corollary}[theorem]{Corollary}

\theoremstyle{definition}
\newtheorem{definition}[theorem]{Definition}

\newtheorem*{remark}{Remark}

\newtheorem{example}[theorem]{Example}

\newtheorem*{setup}{Set up}

\DeclareMathOperator{\Ext}{Ext}
\DeclareMathOperator{\Hom}{Hom}

\DeclareMathOperator{\im}{Im}

\newcommand{\cat}[1]{\mathcal{#1}}           

\newcommand{\class}[1]{\mathcal{#1}}   


\newcommand{\ch}{\textnormal{Ch}(R)}
\newcommand{\cha}[1]{\textnormal{Ch}(\mathcal{#1})}

\newcommand{\tilclass}[1]{\widetilde{\class{#1}}}
\newcommand{\dgclass}[1]{dg\widetilde{\class{#1}}}
\newcommand{\dwclass}[1]{dw\widetilde{\class{#1}}}
\newcommand{\exclass}[1]{ex\widetilde{\class{#1}}}

\newcommand{\rightperp}[1]{#1^{\perp}}
\newcommand{\leftperp}[1]{{}^\perp #1}

\newcommand{\homcomplex}{\mathit{Hom}}

\begin{document}

\title[Homotopy categories of injectives and Gorenstein injectives]{Models for homotopy categories of injectives and Gorenstein injectives}

\author{James Gillespie}
\address{Ramapo College of New Jersey \\
         School of Theoretical and Applied Science \\
         505 Ramapo Valley Road \\
         Mahwah, NJ 07430}
\email[Jim Gillespie]{jgillesp@ramapo.edu}
\urladdr{http://pages.ramapo.edu/~jgillesp/}

\date{\today}

\begin{abstract}
A natural generalization of locally noetherian and locally coherent categories leads us to define locally type $FP_{\infty}$ categories. They include not just all categories of modules over a ring, but also the category of sheaves over any concentrated scheme. In this setting we generalize and study the absolutely clean objects recently introduced in~\cite{bravo-gillespie-hovey}. We show that $\class{D}(\class{AC})$, the derived category of absolutely clean objects, is always compactly generated and that it is embedded in $K(Inj)$, the chain homotopy category of injectives, as a full subcategory containing the DG-injectives. Assuming the ground category $\cat{G}$ has a set of generators satisfying a certain vanishing property, we also show that there is a recollement relating $\class{D}(\class{AC})$ to the (also compactly generated) derived category $\class{D}(\cat{G})$. Finally, we generalize the Gorenstein AC-injectives of~\cite{bravo-gillespie-hovey}, showing that they are the fibrant objects of a cofibrantly generated model structure on $\cat{G}$.
\end{abstract}

\maketitle

\section{Introduction}
Let $\cat{G}$ be a Grothendieck category.  Recall that this is a cocomplete abelian category with a set of generators and such that direct limits are exact.
We say an object $F \in \cat{G}$ is of \emph{type $FP_{\infty}$} if  $\Ext^n_{\cat{G}}(F,-)$ preserves direct limits for all $n \geq 0$. Such objects are automatically finitely presented ($n = 0$). Thinking of these objects as our ``finite'' objects, we call $\cat{G}$ a \emph{locally type $FP_{\infty}$} category if it possesses a generating set $\{G_i\}$ with each $G_i$ of type $FP_{\infty}$. Besides including all Grothendieck categories with a set of finitely generated projective generators, this class of categories includes all locally noetherian and locally coherent categories as well as a vast collection of sheaf and quasi-coherent sheaf categories. Following~\cite{bravo-gillespie-hovey}, we say an object $A$ is \emph{absolutely clean} if $\Ext^1_{\cat{G}}(F,A) = 0$ for all objects $F$ of type $FP_{\infty}$. When $\cat{G}$ is locally noetherian, the absolutely clean objects are precisely the injective objects. When $\cat{G}$ is locally coherent, they are precisely the absolutely pure objects (also called FP-injectives).  For a general locally type $FP_{\infty}$ category, the absolutely clean objects enjoy the same nice properties that injective objects have when $\cat{G}$ is locally noetherian. See Propositions~\ref{prop-abs-clean-properties}/\ref{prop-abs-clean-deconstructible} and Theorems~\ref{them-locally noetherian characterizations}/\ref{them-locally coherent characterizations}.

Now if $\cat{G}$ is locally type $FP_{\infty}$, then so will be the chain complex category $\cha{G}$. Letting $\class{AC}$ denote the class of absolutely clean objects, it inherits the structure of an exact category where the short exact sequences are the usual ones but with all three terms in $\class{AC}$. With respect to this exact structure, an acyclic complex is a complex $A$ which is exact (acyclic) in the usual sense but with each cycle $Z_nA \in \class{AC}$. In the case of modules over a ring $R$, it was explicitly shown in~\cite[Proposition~2.6]{bravo-gillespie} that these are precisely the absolutely clean objects in the category $\ch$. Therefore we will call these \emph{absolutely clean complexes} and denote the class of all of them by $\tilclass{AC}$. Finally, we will say that a complex of injectives $I$ is \emph{AC-injective} if each chain map $A \xrightarrow{} I$ is null homotopic whenever $A$ is an absolutely clean complex. Note that every DG-injective complex is AC-injective since the definition of DG-injective is exactly the same but requires null homotopy when mapping into $I$ from \emph{any} exact complex $A$. Referring to~\cite{neeman-exact category} and~\cite{keller-derived cats} for the notion of the derived category of an exact category, we prove the following in Corollary~\ref{cor-compact generation of abs clean derived cat} and Theorem~\ref{them-injective model for abs clean}.

\begin{them}\label{them-one}
Let $\cat{G}$ be any locally type $FP_{\infty}$ category. Then $\class{D}(\class{AC})$, the derived category of absolutely clean objects, is a compactly generated triangulated category. Moreover, it is equivalent to the chain homotopy category $K(\textnormal{AC-}Inj)$, of all AC-injective complexes.
\end{them}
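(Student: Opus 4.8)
The plan is to realize $\class{D}(\class{AC})$ as the homotopy category of a cofibrantly generated exact model structure on $\cha{AC}$ — the ``AC-injective model structure'' — whose cofibrant objects are everything, whose fibrant objects are exactly the AC-injective complexes, and whose class of trivial objects is $\tilclass{AC}$; both assertions of the theorem are then read off from this model. First I would record the structural facts about $\class{AC}$. By Proposition~\ref{prop-abs-clean-properties} it is closed under extensions, kernels of epimorphisms and cokernels of monomorphisms, so it is an exact category whose conflations are the short exact sequences of $\cat{G}$ with all three terms in $\class{AC}$; since every object of $\class{AC}$ embeds into an injective of $\cat{G}$ — which is again in $\class{AC}$ — with cokernel in $\class{AC}$, the exact category $\class{AC}$ has enough injectives and its injective objects are precisely the injectives of $\cat{G}$. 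It is deconstructible by Proposition~\ref{prop-abs-clean-deconstructible}, and closed under direct limits (immediate from the definition of type $FP_{\infty}$), so that $\cha{AC}$ is an efficient exact category to which the cotorsion-pair machinery applies. Finally, $\tilclass{AC}$ is exactly the class of acyclic objects of $\cha{AC}$, so $\class{D}(\class{AC})$ is obtained from $\cha{AC}$ by inverting the maps whose mapping cone lies in $\tilclass{AC}$.

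For the model structure I would produce two complete cotorsion pairs in $\cha{AC}$ and apply the standard recipe turning two compatible complete cotorsion pairs into a Hovey triple. The first is $(\cha{AC}, \textnormal{injective complexes})$, the trivially complete pair coming from enough injectives, cogenerated by a set built from a generating set of $\class{AC}$; its right class is the contractible complexes of injectives of $\cat{G}$. The second is $(\tilclass{AC}, \rightperp{\tilclass{AC}})$: since $\class{AC}$ is deconstructible, so is $\tilclass{AC}$, and being closed under extensions it is the left class of a complete cotorsion pair cogenerated by a set. The crucial identification is $\rightperp{\tilclass{AC}} = \textnormal{AC-}Inj$; here testing $\Ext^1$-vanishing against the disk complexes on objects of $\class{AC}$ (which lie in $\tilclass{AC}$) forces the entries to be injective, after which, for a complex of injectives, $\Ext^1$-vanishing against all of $\tilclass{AC}$ is equivalent to every chain map into it from an absolutely clean complex being null homotopic — that is, to being AC-injective — by the $\class{AC}$-analogue of the standard characterization of DG-injective complexes. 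The two pairs are compatible (both small intersections equal the injective complexes), so one gets a Hovey triple $(\cha{AC}, \class{W}, \textnormal{AC-}Inj)$, and a direct check gives $\class{W}=\tilclass{AC}$: it contains $\tilclass{AC}$, while any object of $\class{W}$ fits in a conflation with its two other terms in $\tilclass{AC}$, which is closed under cokernels of admissible monomorphisms. Hence the weak equivalences are the $\class{AC}$-quasi-isomorphisms, so the homotopy category is $\class{D}(\class{AC})$; and since the model structure is exact and hereditary, its homotopy category is equally the chain homotopy category of the cofibrant-fibrant objects, namely $K(\textnormal{AC-}Inj)$. This gives the equivalence $\class{D}(\class{AC})\simeq K(\textnormal{AC-}Inj)$.

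For compact generation I would invoke the type $FP_{\infty}$ hypothesis directly, since it cannot be extracted from the model structure alone: the objects generating $\class{AC}$ as a deconstructible class are absolutely clean and hence typically far from finitely presented. The mechanism is that each object $F$ of type $FP_{\infty}$ of $\cat{G}$ produces a compact object of $\class{D}(\class{AC})$ — obtained by resolving $F$ by absolutely clean objects (available from the complete cotorsion pair $(\leftperp{\class{AC}}, \class{AC})$ on $\cat{G}$, which is cogenerated by a set of type $FP_{\infty}$ objects), passing to $\class{D}(\class{AC})$, and, when $F$ has infinite injective dimension, correcting by a ``stable'' summand so as also to see the acyclic AC-injective complexes — and these objects, as $F$ runs over (a set of) type $FP_{\infty}$ objects, form a set of compact generators. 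Compactness comes from the defining property of type $FP_{\infty}$: the relevant $\Hom$-groups in $\class{D}(\class{AC})$ rewrite in terms of the groups $\Ext^n_{\cat{G}}(F,-)$, which commute with coproducts — indeed with all direct limits. Generation follows because a generating set of $\cat{G}$ by type $FP_{\infty}$ objects, together with the stable corrections, detects the vanishing of all these $\Hom$-groups. This is the content of Corollary~\ref{cor-compact generation of abs clean derived cat}.

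I expect this last step to be the genuine obstacle, and within it the generation statement. The difficulty is that an acyclic complex of injectives which happens to be AC-injective need not be contractible — already for $\cat{G}$ locally noetherian one has $\class{D}(\class{AC})\simeq K(\textnormal{Inj}\,\cat{G})$, and the acyclic part of $K(\textnormal{Inj}\,\cat{G})$ can be nonzero — so it does not suffice to know that type $FP_{\infty}$ objects detect acyclicity; one must control the whole triangulated subcategory of acyclic AC-injective complexes, which is precisely where the finiteness packaged into type $FP_{\infty}$ (commutation of $\Ext^{\ast}_{\cat{G}}(F,-)$ with direct limits) must be used, and where the naive candidate generators — injective resolutions of a generating set of $\cat{G}$ — fail. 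A second, more routine, delicate point is the exact identification $\rightperp{\tilclass{AC}}=\textnormal{AC-}Inj$, bridging the $\Ext$-theoretic right class of the cotorsion pair with the homotopy-theoretic definition of AC-injective complexes.
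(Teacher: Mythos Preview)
Your construction of the injective model structure on $\cha{AC}$ and the resulting equivalence $\class{D}(\class{AC})\simeq K(\textnormal{AC-}Inj)$ is correct; the paper does essentially the same thing inside the proof of Theorem~\ref{them-injective model for abs clean} (it first builds an injective model on all of $\cha{G}$ and then restricts, but the core identification $\rightperp{\tilclass{AC}}=\textnormal{AC-}Inj$ inside $\cha{AC}$ is exactly the one you describe).

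The compact generation argument, however, has a real gap---and you have correctly located it. Your ``stable corrections'' are undefined, and there is no evident mechanism that takes an object $F$ of type $FP_{\infty}$ and manufactures from it a compact object of $\class{D}(\class{AC})$ that detects the acyclic part of $K(\textnormal{AC-}Inj)$. The paper sidesteps this entirely by a different strategy: rather than hunting for compact generators inside $\cha{AC}$, it builds a \emph{second} model structure, the \emph{Abs clean model structure} (Theorem~\ref{them-Abs clean model struc}), on the ambient category $\cha{G}$, with Hovey triple $(\leftperp{\tilclass{AC}},\class{V}_1,\dwclass{AC})$; here the fibrant objects are \emph{all} complexes of absolutely clean objects and the trivially fibrant ones are $\tilclass{AC}$. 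The payoff is that this model structure is \emph{finitely generated} in Hovey's sense: its generating cofibrations and trivial cofibrations are maps between spheres and disks on objects of type $FP_{\infty}$, and these are finitely presented in $\cha{G}$. Hovey's general result~\cite[Corollary~7.4.4]{hovey-model-categories} then yields compact generation of the homotopy category automatically---the compact weak generators are just the cokernels $S^n(F)$, $D^n(G_i)$ of the generating cofibrations, with no resolutions or ad hoc corrections required. A short restriction argument (Corollary~\ref{cor-compact generation of abs clean derived cat}) identifies this homotopy category with $\class{D}(\class{AC})$.

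So the idea you are missing is architectural: do not try to extract both conclusions from a single model on $\cha{AC}$. Use one model on $\cha{G}$ that is finitely generated (cofibrants $\leftperp{\tilclass{AC}}$, fibrants $\dwclass{AC}$) to get compactness for free, use your injective model to get the identification with $K(\textnormal{AC-}Inj)$, and then show---as the paper does in part~(4) of Theorem~\ref{them-injective model for abs clean}---that the two model structures share the same class $\class{V}_1$ of trivial objects, hence the same homotopy category.
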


The above is a generalization of a recent result of Stovicek~\cite{stovicek-purity}. He shows this in the locally coherent case, and in fact he is able to show in this case that $K(\textnormal{AC-}Inj)$ is nothing more that $K(Inj)$, the chain homotopy category of all complexes of injectives. For non-coherent situations, it is not clear if or when all maps $A \xrightarrow{} I$ are null homotopic whenever $A \in \tilclass{AC}$ and $I$ is a complex of injectives.

Now since reading the paper~\cite{becker}, the author has been interested in the relationship between cotorsion pairs and recollement of triangulated categories. (See Section~\ref{subsec-abelian model cats} and Definition~\ref{def-recollement}.) Using the methods of~\cite{gillespie-recollements} we construct several cotorsion pairs that are interrelated in such a way to at once yield a recollement. To state the results, we say an object $A \in \cat{G}$ has \emph{finite projective dimension} if for each $B \in \cat{G}$, there is an $n$ such that $\Ext^m_{\cat{G}}(A,B) = 0$ for all $m \geq n$.

\begin{them}\label{them-two}
Let $\cat{G}$ be a Grothendieck category possessing a set of generators $\{G_i\}$ with each $G_i$ of finite projective dimension. We call such a $\cat{G}$ locally finite dimensional (See Section~\ref{subsec-generators of finite projective dimension}).
\begin{enumerate}
\item Letting $S(Inj)$ denote the chain homotopy category of all exact complexes of injectives, there is a recollement
\[
\xy
(-28,0)*+{S(Inj)};
(0,0)*+{K(Inj)};
(25,0)*+{\class{D}(\cat{G})};
{(-19,0) \ar (-10,0)};
{(-10,0) \ar@<0.5em> (-19,0)};
{(-10,0) \ar@<-0.5em> (-19,0)};
{(10,0) \ar (19,0)};
{(19,0) \ar@<0.5em> (10,0)};
{(19,0) \ar@<-0.5em> (10,0)};
\endxy
.\]

\item Suppose $\{G_i\}$ is contained in the class of all $FP_{\infty}$ objects. We then call $\cat{G}$ a locally finite dimensionally type $FP_{\infty}$ category. Then we have the recollement below and in fact all three triangulated categories are compactly generated:
\[
\xy
(-28,0)*+{S(\class{AC})};
(0,0)*+{\class{D}(\class{AC})};
(25,0)*+{\class{D}(\cat{G})};
{(-19,0) \ar (-10,0)};
{(-10,0) \ar@<0.5em> (-19,0)};
{(-10,0) \ar@<-0.5em> (-19,0)};
{(10,0) \ar (19,0)};
{(19,0) \ar@<0.5em> (10,0)};
{(19,0) \ar@<-0.5em> (10,0)};
\endxy
.\]
Here, $S(\class{AC})$ is the full subcategory of $\class{D}(\class{AC})$ consisting of all exact complexes of absolutely clean objects.

\item With the same hypotheses as the above (2), there is an injective model for $S(\class{AC})$ showing that it is equivalent to $S(\textnormal{AC-}Inj)$. This is the full subcategory of  $K(\textnormal{AC-}Inj)$ consisting of all exact AC-injective complexes. Using these injective models the above recollement becomes
\[
\xy
(-30,0)*+{S(\textnormal{AC-}Inj)};
(0,0)*+{\ K(\textnormal{AC-}Inj)};
(26,0)*+{\ \class{D}(\cat{G})};
{(-19,0) \ar (-10,0)};
{(-10,0) \ar@<0.5em> (-19,0)};
{(-10,0) \ar@<-0.5em> (-19,0)};
{(12,0) \ar (21,0)};
{(21,0) \ar@<0.5em> (12,0)};
{(21,0) \ar@<-0.5em> (12,0)};
\endxy
.\]
\end{enumerate}
\end{them}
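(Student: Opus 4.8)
The plan is to realize each of the three triangulated categories as the homotopy category of an abelian model structure on $\cha{G}$ --- equivalently, as the stable category of the fibrant objects of such a structure --- and then to feed these structures into the recollement machinery of~\cite{gillespie-recollements}. Recall (after~\cite{becker} and Gillespie) that an injective model structure on a suitable exact category is the same data as a single complete hereditary injective cotorsion pair $(\class{W},\class{F})$, i.e.\ one with $\class{W}\cap\class{F}$ equal to the class of injective objects, and that its homotopy category is the stable category of $\class{F}$. The input from~\cite{gillespie-recollements} that I will use is this: two injective cotorsion pairs $(\class{W}_1,\class{F}_1)$ and $(\class{W}_2,\class{F}_2)$ with $\class{F}_1\subseteq\class{F}_2$ (equivalently $\class{W}_2\subseteq\class{W}_1$) combine, through the third injective cotorsion pair whose fibrant class is $\class{F}_3=\class{W}_1\cap\class{F}_2$, into a recollement $\textnormal{Ho}(\class{F}_3)\to\textnormal{Ho}(\class{F}_2)\to\textnormal{Ho}(\class{F}_1)$ with $\textnormal{Ho}(\class{F}_3)$ the kernel and $\textnormal{Ho}(\class{F}_1)$ the quotient. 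So everything reduces to exhibiting the correct pairs and checking one containment; the real content is verifying that the cotorsion pairs are complete, which is where the dimension hypotheses are consumed.

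For~(1), on $\cha{G}$ I take the standard injective cotorsion pair $(\class{W}_1,\class{F}_1)$ with $\class{W}_1$ the exact complexes and $\class{F}_1$ the DG-injective complexes --- complete for every Grothendieck category, with homotopy category $\class{D}(\cat{G})$ --- together with the injective cotorsion pair $(\class{W}_2,\class{F}_2)$ whose fibrant class $\class{F}_2$ is all complexes of injectives and whose homotopy category is $K(Inj)$ (its existence, like that of the third pair below, being a point where the finite-dimensionality hypothesis is used). Since a DG-injective complex has injective components we have $\class{F}_1\subseteq\class{F}_2$, and then $\class{F}_3=\class{W}_1\cap\class{F}_2$ is exactly the class of exact complexes of injectives, whose stable category is $S(Inj)$; as $\class{F}_3\subseteq\class{W}_1$, the composite $S(Inj)\to\class{D}(\cat{G})$ is zero, as a recollement requires. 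The local-finite-dimensionality hypothesis enters in guaranteeing completeness of the cotorsion pairs involved --- above all of the one cutting out the exact complexes of injectives: a uniform bound on the projective dimensions of the generators $\{G_i\}$ lets one cogenerate that pair, and deconstruct its right half, from a set assembled from the $G_i$ and their finite projective resolutions, so that the Eklof--Trlifaj/small-object construction applies. I expect this deconstructibility verification to be the main obstacle: on the injective side it is genuinely more delicate than in dual situations where projectives or Noetherian hypotheses are available.

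For~(2), I run the same argument with ``AC-injective'' in place of ``injective''. By Theorem~\ref{them-one} --- via Corollary~\ref{cor-compact generation of abs clean derived cat} and Theorem~\ref{them-injective model for abs clean} --- there is a complete injective cotorsion pair on $\cha{G}$ with fibrant class the AC-injective complexes and homotopy category $K(\textnormal{AC-}Inj)\simeq\class{D}(\class{AC})$, compactly generated; here completeness is more accessible, leaning on the deconstructibility of the absolutely clean complexes (Proposition~\ref{prop-abs-clean-deconstructible}). Pairing this with the standard pair, the containment required is ``every DG-injective complex is AC-injective'', noted in the introduction, so $\class{F}_3$ is the class of exact AC-injective complexes, with stable category $S(\textnormal{AC-}Inj)$; this yields the recollement $S(\textnormal{AC-}Inj)\to K(\textnormal{AC-}Inj)\to\class{D}(\cat{G})$ of part~(3), which transports along $\class{D}(\class{AC})\simeq K(\textnormal{AC-}Inj)$ and $S(\class{AC})\simeq S(\textnormal{AC-}Inj)$ to the recollement of part~(2). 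For compact generation: $\class{D}(\class{AC})$ is compactly generated by Theorem~\ref{them-one}; $\class{D}(\cat{G})$ is compactly generated because under the stronger hypothesis each $G_i$ is of type $FP_{\infty}$ \emph{and} of finite projective dimension, hence compact in $\class{D}(\cat{G})$ (the functor $\Ext^n_{\cat{G}}(G_i,-)$ commutes with coproducts and only finitely many $n$ survive); and since the quotient $\class{D}(\class{AC})\to\class{D}(\cat{G})$ carries the compact generators furnished by Theorem~\ref{them-one} (essentially the injective resolutions of the $FP_{\infty}$ objects) to stalks of $FP_{\infty}$ objects, which are compact in $\class{D}(\cat{G})$ and generate it, Neeman's localization theorem forces $S(\class{AC})$ to be compactly generated too.

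For~(3), it remains to identify $S(\class{AC})$ with $S(\textnormal{AC-}Inj)$ and to pin down the injective model. The model is precisely the third model structure produced above, whose fibrant objects are the exact AC-injective complexes; and the equivalence $\class{D}(\class{AC})\simeq K(\textnormal{AC-}Inj)$ of Theorem~\ref{them-one} restricts to the full subcategories of exact complexes on either side because (i) the AC-injective replacement of an exact complex of absolutely clean objects stays exact --- by the long exact $\Ext$-sequence together with the vanishing $\Ext^{\geq 1}_{\cat{G}}(F,A)=0$ for $F$ of type $FP_{\infty}$ and $A$ absolutely clean (Proposition~\ref{prop-abs-clean-properties}) --- and (ii) an exact AC-injective complex is already a complex of absolutely clean objects, since injective objects are absolutely clean. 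With this identification the recollement of~(2) becomes the final display, and nothing remains but to carry the adjoint functors through the two equivalences.
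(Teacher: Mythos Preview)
Your overall strategy---three injective cotorsion pairs fed into the recollement machinery of~\cite{gillespie-recollements}---is the paper's, and your derivation of parts~(1) and~(3) matches Corollaries~\ref{cor-recollement of krause} and~\ref{cor-AC-injective recollement of Krause} closely. Two corrections and one genuine difference are worth flagging.

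First, you misplace the finite-dimensionality hypothesis in part~(1). The pair with $\class{F}_2=\dwclass{I}$ (all complexes of injectives) is complete for \emph{any} Grothendieck category---this is Theorem~\ref{them-Inj model structure}. The hypothesis is needed only for the third pair, with right side $\exclass{I}$ the \emph{exact} complexes of injectives (Theorem~\ref{them-exact Inj model structure}). Second, your proposed mechanism for that pair is off: $\cat{G}$ need not have any projectives, so ``finite projective resolutions'' of the $G_i$ are unavailable, and the paper's definition of finite projective dimension does not even ask for a uniform bound. The actual argument (via~\cite[Prop.~4.6]{gillespie-degreewise-model-strucs}) cogenerates the pair by the set $\{D^n(G_i/C)\}\cup\{S^n(G_i)\}$; the finite projective dimension enters only through a dimension-shifting step showing $\Ext^1(S^n(G_i),I)=0$ for every exact complex of injectives $I$, so that such $I$ lie in the right orthogonal.

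For part~(2) the paper takes a different route. Rather than first proving~(3) and transporting across the equivalence of Theorem~\ref{them-one}, it works directly with the (non-injective) Abs clean Hovey triples $(\leftperp{\tilclass{AC}},\class{V}_1,\dwclass{AC})$, $(\leftperp{\tilclass{AC}},\class{V}_2,\exclass{AC})$, $(\leftperp{\exclass{AC}},\class{E},\dwclass{AC})$ and invokes~\cite[Theorem~3.4]{gillespie-mock projectives}. This pays off for compact generation: all three model structures are \emph{finitely generated} (the domains and codomains of the generating (trivial) cofibrations in $I'$, $J'_1$, $J'_2$ are finitely presented), so compact generation of all three homotopy categories is immediate from~\cite[Corollary~7.4.4]{hovey-model-categories}. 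Your route through Neeman's localization theorem is viable, but your description of the compact generators of $\class{D}(\class{AC})$ as ``injective resolutions of the $FP_\infty$ objects'' is not what the construction actually delivers---they are cokernels of the maps in $I'$, i.e.\ disks and spheres on $FP_\infty$ objects---and tracking these through the quotient functor is more bookkeeping than the paper's one-line argument.
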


\begin{proof}
We emphasize that (2) and (3) have been established in the coherent case by Stovicek in~\cite{stovicek-purity}. Our general versions are the subject of Section~\ref{sec-recollements}. In particular, the above three results are Corollaries~\ref{cor-recollement of krause} and~\ref{cor-FP-infinity recollement of krause} and~\ref{cor-AC-injective recollement of Krause}.
\end{proof}

The remainder of the paper, Sections~\ref{sec-AC-acyclic models} and~\ref{sec-Gorenstein-inj}, is dedicated to Gorenstein homological algebra. Here we again work in the general setting of the locally finite dimensionally type $FP_{\infty}$ categories of Section~\ref{subsec-generators of finite projective dimension}. Following~\cite{bravo-gillespie-hovey}, we define an object $M$ in such a category to be \emph{Gorenstein AC-injective} if
$M=Z_{0}I$ for some exact complex $I$ of injectives for which $\Hom_{\cat{G}}(A,I)$ remains exact for every absolutely clean object $A$.  Letting $\class{GI}$ denote the class of all Gorenstein AC-injectives,
we establish the following result in Theorem~\ref{thm-Gor-module} and Proposition~\ref{prop-cogenerated}.

\begin{them}\label{them-three}
Let $\cat{G}$ be a locally finite dimensionally type $FP_{\infty}$ category. Then
there is a cofibrantly generated abelian model structure on $\cat{G}$ in which each object is cofibrant and the fibrant objects are precisely the Gorenstein AC-injectives. We call this the \textbf{Gorenstein AC-injective model structure}.
\end{them}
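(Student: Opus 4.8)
The plan is to build the model structure via Hovey's correspondence between abelian model structures and compatible pairs of complete cotorsion pairs (the "Hovey triple" approach). Since we want every object to be cofibrant, the cofibrant class must be all of $\cat{G}$, which forces the trivially cofibrant class to be exactly the class of trivial objects $\class{W}$, and the fibrant class to be $\class{GI}$, the Gorenstein AC-injectives. So I need: (i) a thick subcategory $\class{W}\subseteq\cat{G}$; (ii) the pair $(\cat{G}, \class{W}\cap\class{GI})$ is a complete cotorsion pair; (iii) the pair $(\class{W},\class{GI})$ is a complete cotorsion pair; and all cotorsion pairs should be cogenerated by a set so that the resulting model structure is cofibrantly generated. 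The natural guess for $\class{W}$ is the class of objects of finite injective dimension — or, more precisely in this relative setting, the right orthogonal considerations suggest taking $\class{W} = \rightperp{\class{GI}}$ once we know $\leftperp{(\rightperp{\class{GI}})} = \class{GI}$, i.e. that $\class{GI}$ is the left half of a cotorsion pair.

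\smallskip

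First I would establish the structural properties of $\class{GI}$: it is closed under extensions, under direct summands, contains the injectives, and — crucially — it is a \emph{Kaplansky class} or at least a deconstructible class. The deconstructibility is what will let me invoke the small object argument / Eklof–Trlifaj machinery to get a cogenerating set $\class{S}$ with $\class{GI} = \leftperp{(\rightperp{\class{S}})}$, hence a complete cotorsion pair $(\class{GI},\rightperp{\class{GI}})$ cogenerated by a set. To prove deconstructibility I would use the locally finite dimensionally type $FP_\infty$ hypothesis: an object is Gorenstein AC-injective iff it is a syzygy in a totally acyclic complex of injectives (acyclic and $\Hom(A,-)$-acyclic for all absolutely clean $A$), and one shows via a standard filtration/transfinite-induction argument — as in the module case of \cite{bravo-gillespie-hovey} — that such objects form a deconstructible class; the finite-dimensionality of the generators is used to guarantee there are "enough" injectives of bounded size and that finite injective dimension behaves well. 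I expect this to be the technical heart.

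\smallskip

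Next, set $\class{W} := \rightperp{\class{GI}}$. I must show $(\cat{G},\class{W})$ is a complete cotorsion pair — equivalently $\class{W} = \rightperp{\cat{G}}$ in the relevant sense, i.e. $\class{W}$ is exactly the class of objects $Y$ with $\Ext^1(X,Y)=0$ for all $X$, which since $\cat{G}$ is Grothendieck and has enough injectives forces $\class{W}$ to consist of (a subclass of) injectives; more carefully, $\class{W}$ should be the objects of finite injective dimension. Here the key identity to verify is $\class{W}\cap\class{GI} = \class{I}$ (the injectives): a Gorenstein AC-injective of finite injective dimension is injective — this uses that if $M\in\class{GI}$ then $M$ embeds in a totally acyclic complex of injectives, and finite injective dimension forces the complex to split, so $M$ is a summand of an injective. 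Then I check compatibility: $\class{W}$ is thick (closed under retracts and two-out-of-three on short exact sequences), which follows because finite injective dimension is thick, and the two cotorsion pairs $(\cat{G},\class{I})$ — wait, rather $(\cat{G},\class{W}\cap\class{GI})=(\cat{G},\class{I})$ — and $(\class{GI},\class{W})$ satisfy Hovey's condition. Actually the cleaner route: note $(\cat{G},\class{I})$ is trivially a complete cotorsion pair (enough injectives), $(\class{GI},\class{W})$ is complete by the deconstructibility step, and one verifies $\class{GI}\cap\class{W} = \class{I}$ and that $\class{W}$ is thick; Hovey's theorem then produces the model structure, and cofibrant generation follows since both cotorsion pairs are generated by sets (for $(\cat{G},\class{I})$ trivially; for $(\class{GI},\class{W})$ by deconstructibility).

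\smallskip

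The main obstacle I anticipate is proving that $(\class{GI},\class{W})$ is a \emph{complete} cotorsion pair — concretely, showing $\class{GI}$ is deconstructible (special precovers) and that $\rightperp{\class{GI}}$ gives special preenvelopes — in the full generality of locally finite dimensionally type $FP_\infty$ categories rather than module categories. In \cite{bravo-gillespie-hovey} this rests on set-theoretic estimates (Kaplansky-style cardinality bounds on subobjects generated under the relevant closure operations), and transporting those bounds to an arbitrary Grothendieck category with $FP_\infty$ generators of finite projective dimension requires care: one needs a generating set, a notion of "size," and the fact that absolutely clean objects form a deconstructible class (Proposition \ref{prop-abs-clean-deconstructible}) to control the $\Hom(A,-)$-exactness condition along the filtration. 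Once deconstructibility of $\class{GI}$ is in hand, the rest is the now-standard Hovey/Gillespie bookkeeping.
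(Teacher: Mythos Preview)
Your proposal has the cotorsion pair oriented backwards. You initially write that you need $(\class{W},\class{GI})$ to be a complete cotorsion pair, which is correct, but then you set $\class{W} := \rightperp{\class{GI}}$ and spend the rest of the plan trying to show $\class{GI}$ is deconstructible so that $(\class{GI},\class{W})$ is a complete cotorsion pair cogenerated by a set. In an injective model structure the fibrant objects sit on the \emph{right} of the cotorsion pair, so one needs $\class{W}=\leftperp{\class{GI}}$ and it is $\class{W}$, not $\class{GI}$, that must be shown deconstructible. Indeed $\class{GI}$ contains all injectives and is closed under products, so there is no reason to expect it to be deconstructible; the Kaplansky/filtration argument you sketch is the dual of what is needed and would not go through. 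Your identification of $\class{W}$ with the objects of finite injective dimension is also not correct in this generality (that holds over Gorenstein rings, not in general).

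The paper's route is quite different and avoids a direct deconstructibility argument for either class. It bootstraps from the already-constructed exact AC-acyclic model structure on $\cha{G}$ (Theorem~\ref{them-AC-acyclic model}): one characterizes $W\in\class{W}$ by the triviality of $S^0(W)$ in that model structure (Lemma~\ref{lem-spheres}), and then for any $M$ takes a fibrant replacement $0\to S^0M\to I\to X\to 0$ in $\cha{G}$ with $I$ an exact AC-acyclic complex of injectives. Applying $Z_0$ and the snake lemma gives $0\to M\to Z_0I\to Z_0X\to 0$ with $Z_0I\in\class{GI}$ by definition and $Z_0X\in\class{W}$ by Lemma~\ref{lem-cycles-of-W}. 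This gives enough injectives; enough projectives follows by Salce's trick using that the generators $G_i$ lie in $\class{W}$ (here the finite projective dimension of the $G_i$ is used, via dimension shifting). That $(\class{W},\class{GI})$ is actually a cotorsion pair then reduces to closure of $\class{GI}$ under retracts, handled by an Eilenberg swindle. Cogeneration by a set (hence cofibrant generation) is obtained by pulling back the deconstructibility of the trivial class in the complex-level model structure to $\cat{G}$ via $S^0$. The essential idea you are missing is this passage through chain complexes: the approximations in $\cat{G}$ come from fibrant replacement in $\cha{G}$, not from a direct filtration of Gorenstein AC-injectives.
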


The Gorenstein AC-injective model structure allows us to define the stable category of $\cat{G}$, denoted St($\cat{G}$), as the associated homotopy category. It is equivalent to the category of all Gorenstein AC-injectives, modulo $\sim$, where $f \sim g$ if and only if $g-f$ factors through an injective object.

\

\noindent \textbf{Acknowledgments:}
I owe a debt of gratitude to my coauthors of~\cite{bravo-gillespie-hovey}, Daniel Bravo and Mark Hovey. The original idea of this paper was simply to generalize some of the results of that paper to a more general setting.
To this end, the paper started as notes in the Summer of 2014 while preparing the talk~\cite{gillespie-ASTA talk} for the ASTA conference in Spineto, Italy. I would like to thank the conference organizers for the invitation to speak. Finally, I thank Jan Stovicek for the paper~\cite{stovicek-purity}. The final outcome of this paper was highly influenced by the remarkable results appearing in that paper.

\section{Preliminaries}\label{Sec-preliminaries}

The categorical setting for this paper is that of Grothendieck categories,  and we will heavily use Hovey's theory of abelian model categories~\cite{hovey}. We collect some basic information in this section.

\subsection{Grothendieck categories}\label{subsec-Grothendieck cats}

Recall that a Grothendieck category is a cocomplete abelian category $\cat{G}$, with a generating set, and with exact direct limits. We will often refer to~\cite[Chapter~V]{stenstrom}. To orient the reader, we now summarize some standard facts. First, a Grothendieck category is always complete and every object $B \in \cat{G}$ has an injective envelope $E(B)$. In particular, $\cat{G}$ has enough injectives and these can be used to compute $\Ext^n_{\cat{G}}$. A useful fact is that any Grothendieck category is \emph{well-powered}, meaning the class of subobjects of any given object is in fact a set. See~\cite[Prop~IV.6.6]{stenstrom}, although he uses the term \emph{locally small} instead of well-powered. Finally, given any regular cardinal $\gamma$, by~\cite[Corollary~1.69]{adamek-rosicky}, the class  of all $\gamma$-presented objects is essentially small. This means there exists a set of isomorphism representatives for this class.

\subsection{Thick, abelian, and Serre subcategories}\label{subsec-thick-abelian-serre-subcategories}

Let $\cat{S}$ be a non-empty class of objects, or equivalently, a full subcategory of a Grothendieck category $\cat{G}$. There is a hierarchy of nice properties that $\cat{S}$ can have. Consider a short exact sequence
\[ \tag{\text{$*$}}
0 \xrightarrow{} A \xrightarrow{} B \xrightarrow{} C \xrightarrow{} 0. \]
\begin{definition}\label{def-thick abelian-serre}
Given such a class $\class{S} \subseteq \cat{G}$ we say:
\begin{enumerate}
\item $\class{S}$ is a \textbf{thick subcategory} if it is closed under retracts, that is, direct summands, and whenever two out of three of the terms $A,B,C$ in $(*)$ are in $\class{S}$, then so is the third.

\item $\class{S}$ is a \textbf{wide subcategory} if it is an abelian subcategory and closed under extensions. That is, (i), for each $f$ between objects of $\class{S}$, the $\cat{G}$-kernel and $\cat{G}$-cokernel are back in $\class{S}$. And (ii), in $(*)$ we have $A,C \in \class{S}$ implies $B \in \class{S}$.

\item $\class{S}$ is a \textbf{Serre subcategory} if $B \in \class{S}$ if and only if $A,C \in \class{S}$.
\end{enumerate}
\end{definition}

Note that each type of subcategory must contain 0 and be \emph{replete} meaning it is closed under isomorphic objects. An easy argument shows that a wide subcategory must be closed under retracts. So one easily proves the following lemma.

\begin{lemma}\label{lemma-thick-wide-serre-subcats}
Any Serre subcategory is wide and any wide subcategory is thick.
\end{lemma}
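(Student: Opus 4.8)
The plan is to verify the two implications directly from the definitions in Definition~\ref{def-thick abelian-serre}, using only elementary diagram manipulations in the ambient Grothendieck category $\cat{G}$. Both implications amount to checking a handful of closure conditions, so there is no deep obstacle; the only point requiring a small argument is the remark made just before the lemma, namely that a wide subcategory is automatically closed under retracts, since this is needed to conclude thickness.

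First I would show that any Serre subcategory $\class{S}$ is wide. Closure under extensions is immediate: if $A, C \in \class{S}$ in a short exact sequence $(*)$, then the Serre condition gives $B \in \class{S}$. For the abelian subcategory condition, let $f \colon X \to Y$ be a morphism with $X, Y \in \class{S}$. Factor $f$ as $X \twoheadrightarrow \im f \hookrightarrow Y$. From the short exact sequence $0 \to \ker f \to X \to \im f \to 0$ together with $X \in \class{S}$, applying the Serre condition would require knowing $\im f \in \class{S}$ first; so instead I would argue from $0 \to \im f \to Y \to \cok f \to 0$ is not directly available either. The clean route: $\im f$ is a subobject of $Y \in \class{S}$, and $\cok f = Y / \im f$; but to apply Serre to $0 \to \im f \to Y \to \cok f \to 0$ we need one more input. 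So I would instead first observe that a Serre subcategory is closed under subobjects and quotient objects of its members (take $C = 0$, resp.\ $A = 0$, in the biconditional, noting $0 \in \class{S}$). Hence $\im f$, being a quotient of $X \in \class{S}$, lies in $\class{S}$; then $\ker f$ is a subobject of $X \in \class{S}$, so $\ker f \in \class{S}$; and $\cok f = Y/\im f$ is a quotient of $Y \in \class{S}$, so $\cok f \in \class{S}$. This establishes that $\class{S}$ is an abelian subcategory, and together with closure under extensions, that $\class{S}$ is wide.

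Next I would show that any wide subcategory $\class{S}$ is thick. Suppose in $(*)$ two of $A, B, C$ lie in $\class{S}$; I must show the third does. If $A, B \in \class{S}$, then $C = \cok(A \hookrightarrow B)$ is a $\cat{G}$-cokernel of a morphism between objects of $\class{S}$, hence in $\class{S}$ by the abelian subcategory axiom. If $B, C \in \class{S}$, then $A = \ker(B \twoheadrightarrow C)$ is a $\cat{G}$-kernel of a morphism between objects of $\class{S}$, hence in $\class{S}$. If $A, C \in \class{S}$, then $B \in \class{S}$ by closure under extensions. It remains to check closure under retracts, which is the ``easy argument'' alluded to in the text: if $B \in \class{S}$ and $A$ is a direct summand of $B$, say $B \cong A \oplus A'$ with $p \colon B \to A$ the projection and $i \colon A \to B$ the inclusion, then $A = \im(i \circ p)$ is the image of an endomorphism of $B \in \class{S}$, hence $A \in \class{S}$ by the abelian subcategory axiom. (Alternatively, $A = \cok(A' \hookrightarrow B)$.) This completes the verification that $\class{S}$ is thick, and hence the proof of the lemma.

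I expect no serious obstacle here; the only mild subtlety is organizing the Serre-to-wide argument so that closure under subobjects and quotients is extracted first (by specializing the biconditional to the cases $A = 0$ and $C = 0$), after which the abelian subcategory condition follows formally.
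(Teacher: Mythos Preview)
Your proposal is correct and is precisely the direct verification the paper has in mind; the paper itself gives no argument beyond the remark that a wide subcategory is closed under retracts and the assertion that ``one easily proves'' the lemma, so your detailed check of each closure condition is exactly the intended route. One small expository point: the parenthetical ``take $C = 0$, resp.\ $A = 0$'' does not quite say what you want---closure of a Serre subcategory under subobjects and quotients follows simply from the forward direction of the biconditional applied to $0 \to A \to B \to B/A \to 0$ with $B \in \class{S}$---but the correct argument appears in your next sentence, so this is cosmetic rather than a gap.
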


\subsection{Cotorsion pairs and abelian model structures}\label{subsec-abelian model cats}

Let $\cat{A}$ be a bicomplete abelian category. Hovey showed in~\cite{hovey} that an abelian model structure on $\cat{A}$ is nothing more than two nicely related cotorsion pairs in $\cat{A}$.  By definition, a pair of classes $(\class{X},\class{Y})$ in $\cat{A}$ is called a \emph{cotorsion pair} if $\class{Y} = \rightperp{\class{X}}$ and $\class{X} = \leftperp{\class{Y}}$. Here, given a class of objects $\class{C}$ in $\cat{A}$, the right orthogonal  $\rightperp{\class{C}}$ is defined to be the class of all objects $X$ such that $\Ext^1_{\cat{A}}(C,X) = 0$ for all $C \in \class{C}$. Similarly, we define the left orthogonal $\leftperp{\class{C}}$. We call the cotorsion pair \emph{hereditary} if $\Ext^i_{\cat{A}}(X,Y) = 0$ for all $X \in \class{X}$, $Y \in \class{Y}$, and $i \geq 1$. The cotorsion pair is \emph{complete} if it has enough injectives and enough projectives. This means that for each $A \in \cat{A}$ there exist short exact sequences $0 \xrightarrow{} A \xrightarrow{} Y \xrightarrow{} X \xrightarrow{} 0$ and $0 \xrightarrow{} Y' \xrightarrow{} X' \xrightarrow{} A \xrightarrow{} 0$ with $X,X' \in \class{X}$ and $Y,Y' \in \class{Y}$.
Besides their connection to abelian model structures which we describe next, cotorsion pairs are fundamental in modern homological algebra. There are several good references. In particular we will refer to~\cite{enochs-jenda-book} and~\cite{hovey}.

The main theorem of~\cite{hovey} showed that an abelian model structure on $\cat{A}$ is equivalent to a triple $(\class{Q},\class{W},\class{R})$ of classes of objects in $\cat{A}$ for which $\class{W}$ is thick and $(\class{Q} \cap \class{W},\class{R})$ and $(\class{Q},\class{W} \cap \class{R})$ are each complete cotorsion pairs. By \emph{thick} we mean that the class $\class{W}$ is closed under retracts (i.e., direct summands) and satisfies that whenever two out of three terms in a short exact sequence are in $\class{W}$, then so is the third. In this case, $\class{Q}$ is precisely the class of cofibrant objects of the model structure, $\class{R}$ are precisely the fibrant objects, and $\class{W}$ is the class of trivial objects. We therefore denote an abelian model structure $\class{M}$ as a triple $\class{M} = (\class{Q},\class{W},\class{R})$ and for short
we will denote the two associated cotorsion pairs above by $(\tilclass{Q},\class{R})$ and $(\class{Q},\tilclass{R})$. We say that $\class{M}$ is \emph{hereditary} if both of these associated cotorsion pairs are hereditary. We will also call any abelian model structure $\class{M} = (\class{Q},\class{W},\class{R})$ a \emph{Hovey triple}.

By the \emph{core} of a cotorsion pair $(\class{X},\class{Y})$ we mean $\class{X} \cap \class{Y}$, and so by the \emph{core} of an abelian model structure $\class{M} = (\class{Q},\class{W},\class{R})$ we mean the class $\class{Q} \cap \class{W} \cap \class{R}$. A recent result appearing in~\cite{gillespie-hovey triples} gives useful criteria to help one find and construct an abelian model structure. It says that whenever $(\tilclass{Q},\class{R})$ and $(\class{Q},\tilclass{R})$ are complete hereditary cotorsion pairs with equal cores and $\tilclass{R} \subseteq \class{R}$, then there is a unique thick class $\class{W}$ yielding a Hovey triple $\class{M} = (\class{Q},\class{W},\class{R})$ with $\class{Q} \cap \class{W} = \tilclass{Q}$ and $\class{W} \cap \class{R} = \tilclass{R}$. Besides~\cite{hovey} we will refer to~\cite{hovey-model-categories} for any other basics from the theory of model categories. The following lemma will turn out to be especially useful.

\begin{lemma}\label{lemma-half related}
Suppose we have two complete cotorsion pairs $(\class{Q},\tilclass{R})$ and  $(\tilclass{Q},\class{R})$ in an abelian category and that we also have a thick class $\class{W}$. Then the following hold.
\begin{enumerate}
\item  If $\tilclass{Q} = \class{Q} \cap \class{W}$ and $\tilclass{R} \subseteq \class{W}$, then also $\tilclass{R} = \class{W} \cap \class{R}$. That is, $(\class{Q},\class{W},\class{R})$ is a Hovey triple.
\item  If $\tilclass{R} = \class{W} \cap \class{R}$ and $\tilclass{Q} \subseteq \class{W}$, then also $\tilclass{Q} = \class{Q} \cap \class{W}$. That is, $(\class{Q},\class{W},\class{R})$ is a Hovey triple.
\end{enumerate}
\end{lemma}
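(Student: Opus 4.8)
The plan is to prove part (1) directly and obtain part (2) by a formal dualization, so I concentrate on (1). Throughout I will use completeness of a cotorsion pair $(\class{X},\class{Y})$ in the form stated in the text: every object $A$ fits into short exact sequences $0 \to A \to Y \to X \to 0$ and $0 \to Y' \to X' \to A \to 0$ with $X,X' \in \class{X}$ and $Y,Y' \in \class{Y}$.

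First I would record the inclusion $\tilclass{R} \subseteq \class{R}$, which is forced by the hypotheses: since $\tilclass{Q} = \class{Q}\cap\class{W} \subseteq \class{Q}$ and the two pairs are cotorsion pairs, taking right orthogonals reverses the inclusion, giving $\tilclass{R} = \rightperp{\class{Q}} \subseteq \rightperp{\tilclass{Q}} = \class{R}$. Combined with the assumption $\tilclass{R} \subseteq \class{W}$ this already gives the easy half, $\tilclass{R} \subseteq \class{W}\cap\class{R}$.

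For the reverse inclusion I would take $W \in \class{W}\cap\class{R}$ and apply ``enough injectives'' of the cotorsion pair $(\class{Q},\tilclass{R})$ to get a short exact sequence $0 \to W \to T \to C \to 0$ with $T \in \tilclass{R}$ and $C \in \class{Q}$. Since $\tilclass{R} \subseteq \class{W}$ we have $T \in \class{W}$, and $W \in \class{W}$, so thickness of $\class{W}$ (two-out-of-three) puts $C \in \class{W}$; hence $C \in \class{Q}\cap\class{W} = \tilclass{Q}$. Now $W \in \class{R} = \rightperp{\tilclass{Q}}$ forces $\Ext^1(C,W) = 0$, so the sequence splits and $W$ becomes a direct summand of $T \in \tilclass{R}$. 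Because the right-hand class of any cotorsion pair is closed under direct summands (an immediate consequence of additivity of $\Ext^1$ in the first variable), we conclude $W \in \tilclass{R}$. This establishes $\tilclass{R} = \class{W}\cap\class{R}$; since $\class{W}$ is thick and $(\tilclass{Q},\class{R}) = (\class{Q}\cap\class{W},\class{R})$ and $(\class{Q},\tilclass{R}) = (\class{Q},\class{W}\cap\class{R})$ are complete cotorsion pairs, this is exactly the statement that $(\class{Q},\class{W},\class{R})$ is a Hovey triple.

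Part (2) is the precise dual: one first deduces $\tilclass{Q} \subseteq \class{Q}$ from $\tilclass{R} = \class{W}\cap\class{R} \subseteq \class{R}$ by taking left orthogonals, giving $\tilclass{Q} \subseteq \class{Q}\cap\class{W}$ for free; then, for $W \in \class{Q}\cap\class{W}$, one applies ``enough projectives'' of $(\tilclass{Q},\class{R})$ to get $0 \to S \to T \to W \to 0$ with $T \in \tilclass{Q}$ and $S \in \class{R}$, observes $S \in \class{W}\cap\class{R} = \tilclass{R}$ by thickness, and splits the sequence using $W \in \class{Q} = \leftperp{\tilclass{R}}$, so that $W$ is a summand of $T \in \tilclass{Q}$ and hence in $\tilclass{Q}$. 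There is no serious obstacle in this lemma; the one subtlety worth flagging is that one must invoke completeness on the correct side --- ``enough injectives'' for the first implication and ``enough projectives'' for its dual --- so that the chosen approximation has its cokernel (resp. kernel) in $\class{Q}$ (resp. $\class{R}$) and the remaining middle term can be swept into $\class{W}$ by the two-out-of-three property.
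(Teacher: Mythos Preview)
Your proof is correct and follows essentially the same approach as the paper's: use enough injectives of $(\class{Q},\tilclass{R})$, apply thickness of $\class{W}$ to push the cokernel into $\tilclass{Q}$, and split using $\class{R} = \rightperp{\tilclass{Q}}$. The paper is slightly terser in deriving $\tilclass{Q}\subseteq\class{Q}$ (it just says ``by assumption'' from $\tilclass{Q}=\class{Q}\cap\class{W}$) and in the two-out-of-three step, but the argument is identical.
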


\begin{proof}
The statements are proved similarly, and we will prove (1). We have by assumption that $\tilclass{Q} \subseteq \class{Q}$ and consequently $\tilclass{R} \subseteq \class{R}$. We are also assuming $\tilclass{R} \subseteq \class{W}$, and so we have $\tilclass{R} \subseteq \class{W} \cap \class{R}$.

It is left to show $\tilclass{R} \supseteq \class{W} \cap \class{R}$. Letting $X \in \class{W} \cap \class{R}$, we use completeness of the cotorsion pair $(\class{Q},\tilclass{R})$ to find a short exact sequence $0 \xrightarrow{} X \xrightarrow{} R \xrightarrow{} Q \xrightarrow{} 0$ with $R \in \tilclass{R}$ and $Q \in \class{Q}$. We see that this forces $Q \in \class{Q} \cap \class{W} = \tilclass{Q}$. Hence the sequence must split, forcing $X$ to be a retract of an object in $\tilclass{R}$. So $X$ is also in $\tilclass{R}$.
\end{proof}

\subsection{Recollement situations}
Here we define what is meant by a recollement of triangulated categories. The standard reference is~\cite{BBD-perverse sheaves}, although the definitions below will suffice for our purposes.

\begin{definition}\label{def-localization sequence}
Let $\class{T}' \xrightarrow{F} \class{T} \xrightarrow{G} \class{T}''$ be a sequence of exact functors between triangulated categories. We say it is a \emph{localization sequence} when there exists right adjoints $F_{\rho}$ and $G_{\rho}$ giving a diagram of functors as below with the listed properties.
$$\begin{tikzcd}
\class{T}'
\rar[to-,
to path={
([yshift=0.5ex]\tikztotarget.west) --
([yshift=0.5ex]\tikztostart.east) \tikztonodes}][swap]{F}
\rar[to-,
to path={
([yshift=-0.5ex]\tikztostart.east) --
([yshift=-0.5ex]\tikztotarget.west) \tikztonodes}][swap]{F_{\rho}}
& \class{T}
\rar[to-,
to path={
([yshift=0.5ex]\tikztotarget.west) --
([yshift=0.5ex]\tikztostart.east) \tikztonodes}][swap]{G}
\rar[to-,
to path={
([yshift=-0.5ex]\tikztostart.east) --
([yshift=-0.5ex]\tikztotarget.west) \tikztonodes}][swap]{G_{\rho}}
& \class{T}'' \\
\end{tikzcd}$$
\begin{enumerate}
\item The right adjoint $F_{\rho}$ of $F$ satisfies $F_{\rho} \circ F \cong \text{id}_{\class{T}'}$.
\item The right adjoint $G_{\rho}$ of $G$ satisfies $G \circ G_{\rho} \cong \text{id}_{\class{T}''}$.
\item For any object $X \in \class{T}$, we have $GX = 0$ iff $X \cong FX'$ for some $X' \in \class{T}'$.
\end{enumerate}
The notion of a \emph{colocalization sequence} is the dual. That is, there must exist left adjoints $F_{\lambda}$ and $G_{\lambda}$ with the analogous properties.
\end{definition}

Note the similarity in the definitions above to the notion of a split exact sequence, but for adjunctions. It is true that if  $\class{T}' \xrightarrow{F} \class{T} \xrightarrow{G} \class{T}''$ is a localization sequence then  $\class{T}'' \xrightarrow{G_{\rho}} \class{T} \xrightarrow{F_{\rho}} \class{T}'$ is a colocalization sequence and if  $\class{T}' \xrightarrow{F} \class{T} \xrightarrow{G} \class{T}''$ is a colocalization sequence then  $\class{T}'' \xrightarrow{G_{\lambda}} \class{T} \xrightarrow{F_{\lambda}} \class{T}'$ is a localization sequence. This brings us to the definition of a recollement where the sequence of functors  $\class{T}' \xrightarrow{F} \class{T} \xrightarrow{G} \class{T}''$ is both a localization sequence and a colocalization sequence.

\begin{definition}\label{def-recollement}
Let $\class{T}' \xrightarrow{F} \class{T} \xrightarrow{G} \class{T}''$ be a sequence of exact functors between triangulated categories. We say $\class{T}' \xrightarrow{F} \class{T} \xrightarrow{G} \class{T}''$ induces a \emph{recollement} if it is both a localization sequence and a colocalization sequence as shown in the picture.
\[
\xy
(-20,0)*+{\class{T}'};
(0,0)*+{\class{T}};
(20,0)*+{\class{T}''};
{(-18,0) \ar^{F} (-2,0)};
{(-2,0) \ar@/^1pc/@<0.5em>^{F_{\rho}} (-18,0)};
{(-2,0) \ar@/_1pc/@<-0.5em>_{F_{\lambda}} (-18,0)};
{(2,0) \ar^{G} (18,0)};
{(18,0) \ar@/^1pc/@<0.5em>^{G_{\rho}} (2,0)};
{(18,0) \ar@/_1pc/@<-0.5em>_{G_{\lambda}} (2,0)};
\endxy
\]
\end{definition}
So the idea is that a recollement is a colocalization sequence ``glued'' with a localization sequence.

\section{Locally type $FP_{\infty}$ categories}\label{Sec-locally type FP-infinity}

The point of this section is to introduce the new class of categories, and objects, that we will be working with throughout the rest of the paper. These are the locally type $FP_{\infty}$ categories and the absolutely clean objects.
So let $\cat{G}$ be a Grothendieck category.  Note that for any object $C$, and any direct system $\{X_i\}_{i \in I}$, there is a canonical map $\xi_n : \varinjlim \Ext^n_{\cat{G}}(C,X_i) \xrightarrow{} \Ext^n_{\cat{G}}(C,\varinjlim X_i)$ for each $n \geq 0$. To say $\Ext^n_{\cat{G}}(C,-)$ \emph{preserves direct limits} means that $\xi_n$ is an isomorphism for each direct system.

\begin{definition}\label{def-objects of type FP-infinity}
An object $F \in \cat{G}$ is said to be of \textbf{type $\boldsymbol{FP_{\infty}}$} if the functors $\Ext^n_{\cat{G}}(F,-)$ preserve direct limits for all $n \geq 0$.
\end{definition}

Recall that an object $C \in \cat{G}$ is called \textbf{finitely presented} if $\Hom_{\cat{G}}(C,-)$ preserves direct limits, that is, when $\xi_0$ is an isomorphism for each direct system. Also, $C$ is called \textbf{finitely generated} when $\Hom_{\cat{G}}(C,-)$ preserves direct unions of subobjects of any given object. So any object of type $FP_{\infty}$ is certainly finitely presented and hence finitely generated.

\begin{example}\label{Example-fg-projectives}
Any finitely generated projective object must be of type $FP_{\infty}$. (Reason) $\Ext^n_{\cat{G}}(P,-)$ vanishes for $n > 0$ and projective objects $P$. So it is enough to show that for any finitely generated projective $P$, and direct system $\{X_i\}_{i \in I}$, the canonical map $\xi_0 : \varinjlim \Hom_{\cat{G}}(P,X_i) \xrightarrow{} \Hom_{\cat{G}}(P,\varinjlim X_i)$ is an isomorphism. The fact that $\xi_0$ is a monomorphism follows just because $P$ is finitely generated; see paragraph three of the proof of~\cite[Prop.V.3.4]{stenstrom} for the argument. To see $\xi_0$ is an epimorphism, consider a morphism $\alpha : P \xrightarrow{} \varinjlim X_i$. Let $X'_i$ denote $\im{(X_i \xrightarrow{} \varinjlim X_i)}$, so that $\varinjlim X_i = \Sigma X'_i$. Since this is a direct union of subobjects, and $P$ is finitely generated, $\alpha$ must factor through some $X'_i$ by a morphism $\alpha' : P \xrightarrow{} X'_i$. But $P$ is projective and so this $\alpha'$ lifts over the epimorphism $X_i \twoheadrightarrow X'_i$. This shows $\xi_0$ is an epimorphism.
\end{example}

Recalling the notion of a thick subcategory from Section~\ref{subsec-thick-abelian-serre-subcategories}, we have the following proposition.

\begin{proposition}\label{prop-thickness of FP-infinity}
For any Grothendieck category $\cat{G}$, the class of all objects of type $FP_{\infty}$ is a thick subcategory.
\end{proposition}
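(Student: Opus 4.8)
The plan is to verify the three defining conditions of a thick subcategory for the class $\class{S}$ of objects of type $FP_{\infty}$: closure under retracts (direct summands), and the two-out-of-three property for short exact sequences. The whole argument rests on exploiting the long exact sequence in $\Ext$ together with the key facts that (i) direct limits are exact in a Grothendieck category and (ii) a filtered colimit of exact sequences of abelian groups is exact, so that taking $\varinjlim$ commutes with forming the long exact $\Ext$-sequence.

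First I would handle the two-out-of-three property. Fix a short exact sequence $0 \to A \to B \to C \to 0$ and a direct system $\{X_i\}_{i\in I}$. Applying $\Ext^*_{\cat{G}}(-,X_i)$ to the sequence and taking the filtered colimit over $i$ yields a long exact sequence with terms $\varinjlim \Ext^n_{\cat{G}}(A,X_i)$, $\varinjlim \Ext^n_{\cat{G}}(B,X_i)$, $\varinjlim \Ext^n_{\cat{G}}(C,X_i)$; applying $\Ext^*_{\cat{G}}(-,\varinjlim X_i)$ gives the long exact sequence with terms $\Ext^n_{\cat{G}}(A,\varinjlim X_i)$, etc.; and the canonical maps $\xi_n$ assemble into a morphism of long exact sequences (commuting with connecting homomorphisms by naturality). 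Now one invokes the Five Lemma degree by degree: if two of the three objects $A,B,C$ are of type $FP_{\infty}$, then for every $n$ the corresponding two of each triple of vertical maps are isomorphisms, and the long exact sequences force the third to be an isomorphism as well. A small bit of care is needed at the bottom, e.g. using that $\xi_0$ is always a monomorphism for finitely generated objects (as recalled in the excerpt) to seed the induction, but this is routine.

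Next, closure under retracts. Suppose $F$ is a retract of an object $F'$ of type $FP_{\infty}$, so $F \oplus F'' \cong F'$ for some $F''$. Since $\Ext^n_{\cat{G}}(F\oplus F'', -) \cong \Ext^n_{\cat{G}}(F,-)\oplus \Ext^n_{\cat{G}}(F'',-)$ and $\varinjlim$ commutes with finite direct sums, the map $\xi_n$ for $F'$ is, up to this identification, the direct sum of the maps $\xi_n$ for $F$ and for $F''$. A direct summand of an isomorphism is an isomorphism (its components are split mono / split epi and one checks directly), so $\xi_n$ for $F$ is an isomorphism for all $n$, i.e.\ $F$ is of type $FP_{\infty}$.

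The main obstacle, and the only real content, is justifying that filtered colimits commute with the long exact $\Ext$-sequence. Concretely one wants: $\varinjlim$ of the long exact sequence $\cdots \to \Ext^n(C,X_i) \to \Ext^n(B,X_i)\to \Ext^n(A,X_i) \to \Ext^{n+1}(C,X_i)\to\cdots$ is again exact, and that the canonical comparison with $\Ext^*_{\cat{G}}(-,\varinjlim X_i)$ is compatible with all the structure maps including the connecting maps. Exactness of filtered colimits of abelian groups gives the first point; for naturality of the connecting homomorphism one can either compute $\Ext$ via an injective resolution of $X_i$ functorial enough to pass to the colimit, or argue via the standard Yoneda-$\Ext$ description of the boundary map and its naturality in the second variable. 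I would state this as a lemma (filtered colimits are exact, hence commute with homology of a directed system of complexes) and then the Five Lemma bookkeeping is mechanical.
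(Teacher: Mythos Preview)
Your proposal is correct and follows essentially the same route as the paper: long exact $\Ext$-sequence in the second variable, pass to the filtered colimit, compare with the long exact sequence for $\varinjlim X_i$ via the canonical maps $\xi_n$, and apply the Five Lemma; for retracts, decompose $\Ext^n(F\oplus F'',-)$ and use that $\varinjlim$ commutes with finite direct sums so that $\xi_n$ splits as a direct sum of maps, whence each summand is an isomorphism. One small remark: your comment about ``seeding the induction'' with the monomorphism property of $\xi_0$ for finitely generated objects is unnecessary (and slightly off, since you do not know a priori that the third object is finitely generated); the long exact sequences begin with $0$, so the Five Lemma applies cleanly at every degree without any extra input.
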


\begin{proof}
Consider a short exact sequence $0 \xrightarrow{} A \xrightarrow{} B \xrightarrow{} C \xrightarrow{} 0$ and a direct system $\{X_i\}_{i \in I}$. It gives rise to a long exact sequence of direct systems
$$0 \xrightarrow{} \{\Hom_{\cat{G}}(C, X_i)\}_{i \in I} \xrightarrow{} \{\Hom_{\cat{G}}(B, X_i)\}_{i \in I} \xrightarrow{} \{\Hom_{\cat{G}}(A, X_i)\}_{i \in I} \xrightarrow{}$$
$$\{\Ext^1_{\cat{G}}(C, X_i)\}_{i \in I} \xrightarrow{} \{\Ext^1_{\cat{G}}(B, X_i)\}_{i \in I} \xrightarrow{} \{\Ext^1_{\cat{G}}(A, X_i)\}_{i \in I} \xrightarrow{} \cdots $$ Since direct limits (of abelian groups) are exact, we get a long exact sequence
$$0 \xrightarrow{} \varinjlim \Hom_{\cat{G}}(C, X_i) \xrightarrow{} \varinjlim \Hom_{\cat{G}}(B, X_i) \xrightarrow{} \varinjlim \Hom_{\cat{G}}(A, X_i) \xrightarrow{}$$
$$\varinjlim \Ext^1_{\cat{G}}(C, X_i) \xrightarrow{} \varinjlim \Ext^1_{\cat{G}}(B, X_i) \xrightarrow{} \varinjlim \Ext^1_{\cat{G}}(A, X_i) \xrightarrow{} \cdots $$
The natural maps $\xi_0,\xi_1,\xi_2, \dots $ connect this long exact sequence to the long exact sequence below:
$$0 \xrightarrow{}  \Hom_{\cat{G}}(C, \varinjlim X_i) \xrightarrow{} \Hom_{\cat{G}}(B, \varinjlim X_i) \xrightarrow{} \Hom_{\cat{G}}(A, \varinjlim X_i) \xrightarrow{}$$
$$\Ext^1_{\cat{G}}(C, \varinjlim X_i) \xrightarrow{} \Ext^1_{\cat{G}}(B, \varinjlim X_i) \xrightarrow{} \Ext^1_{\cat{G}}(A, \varinjlim X_i) \xrightarrow{} \cdots $$
Now an application of the five lemma shows that whenever two out of three of $A,B,C$ are of type $FP_{\infty}$, then so is the third.

It is left to show that the class of $FP_{\infty}$ objects is closed under retracts. So say $A \oplus B$ is of type $FP_{\infty}$ and $\{X_i\}_{i \in I}$ is a direct system. Then $\{  \Ext^n_{\cat{G}}(A \oplus B, X_i) \}_{i \in I}$ is a direct system of abelian groups, which we note is isomorphic to a direct system $\{\Ext^n_{\cat{G}}(A, X_i) \oplus \Ext^n_{\cat{G}}(B, X_i)\}_{i \in I}$. By Lemma~\ref{lemma-direct limit} below we have
$$\varinjlim [\Ext^n_{\cat{G}}(A, X_i) \oplus \Ext^n_{\cat{G}}(B, X_i)] \cong [\varinjlim \Ext^n_{\cat{G}}(A, X_i)] \oplus [\varinjlim \Ext^n_{\cat{G}}(B,X_i)].$$
This means we have an isomorphism in the top row of the commutative diagram:
$$\begin{CD}
  \varinjlim \Ext^n_{\cat{G}}(A \oplus B, X_i) @>>> [\varinjlim \Ext^n_{\cat{G}}(A, X_i)] \oplus [\varinjlim \Ext^n_{\cat{G}}(B,X_i)]  \\
@VV \xi_{A \oplus B, n} V  @VV\xi_{A,n} \oplus \xi_{B,n} V \\
\Ext^n_{\cat{G}}(A \oplus B, \varinjlim X_i) @>>> \Ext^n_{\cat{G}}(A, \varinjlim X_i) \oplus \Ext^n_{\cat{G}}(B,\varinjlim X_i)  \\
\end{CD}$$
Clearly the bottom row is also an isomorphism, as is $\xi_{A \oplus B, n}$ by hypothesis. Thus $\xi_{A,n} \oplus \xi_{B,n}$ is an isomorphism.
One can check that this implies the summands $\varinjlim \Ext^n_{\cat{G}}(A,X_i) \xrightarrow{\xi_{A,n}} \Ext^n_{\cat{G}}(A,\varinjlim X_i)$ and $\varinjlim \Ext^n_{\cat{G}}(B,X_i) \xrightarrow{\xi_{B,n}} \Ext^n_{\cat{G}}(B,\varinjlim X_i)$ are all isomorphisms.
\end{proof}

\begin{lemma}\label{lemma-direct limit}
Let $I$ be a directed set, and let $\{A_i,\alpha_{ij}\}$ and $\{B_i,\beta_{ij}\}$ each be direct systems, over $I$, of abelian groups. Then the direct system $\{A_i \oplus B_i, \alpha_{ij} \oplus \beta_{ij}\}$ satisfies
$\varinjlim (A_i \oplus B_i) \cong \varinjlim A_i \oplus \varinjlim B_i$.
\end{lemma}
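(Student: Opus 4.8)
The statement to prove is Lemma~\ref{lemma-direct limit}: for two direct systems $\{A_i,\alpha_{ij}\}$ and $\{B_i,\beta_{ij}\}$ of abelian groups indexed by a directed set $I$, one has $\varinjlim (A_i \oplus B_i) \cong \varinjlim A_i \oplus \varinjlim B_i$. The plan is to verify this directly via the universal property of the colimit, rather than invoking any abstract machinery about commuting colimits.

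First I would recall the explicit construction of the direct limit of abelian groups: $\varinjlim A_i = \bigl(\bigoplus_{i} A_i\bigr)/N$, where $N$ is generated by elements of the form $a - \alpha_{ij}(a)$ for $a \in A_i$ and $i \leq j$; equivalently, as a set it is $\bigl(\coprod_i A_i\bigr)/\!\sim$ where $a_i \sim a_j$ iff $\alpha_{ik}(a_i) = \alpha_{jk}(a_j)$ for some $k \geq i,j$. The structure maps $\lambda_i^A \colon A_i \to \varinjlim A_i$ send $a$ to its class. The same applies to $\{B_i\}$ and to $\{A_i \oplus B_i\}$, whose structure maps $\lambda_i$ satisfy $\lambda_i(a,b) = (\lambda_i^A(a), \lambda_i^B(b))$ under the candidate isomorphism. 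Concretely, I would define $\Phi \colon \varinjlim(A_i \oplus B_i) \to \varinjlim A_i \oplus \varinjlim B_i$ to be the unique map induced by the cocone $\{(\lambda_i^A, \lambda_i^B) \colon A_i \oplus B_i \to \varinjlim A_i \oplus \varinjlim B_i\}$, which is a cocone because $(\lambda_j^A, \lambda_j^B)\circ(\alpha_{ij}\oplus\beta_{ij}) = (\lambda_j^A\alpha_{ij}, \lambda_j^B\beta_{ij}) = (\lambda_i^A, \lambda_i^B)$.

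For the inverse, I would produce a map the other way and check the two composites are identities. Using directedness: given classes $[a_i] \in \varinjlim A_i$ and $[b_j] \in \varinjlim B_i$, pick $k \geq i, j$ and send $([a_i],[b_j])$ to the class of $(\alpha_{ik}(a_i), \beta_{jk}(b_j)) \in A_k \oplus B_k$; one checks this is well-defined independently of the choices of representatives and of $k$, again using that $I$ is directed, and that it is a group homomorphism. Then $\Phi$ followed by this map, and this map followed by $\Phi$, are both seen to be the identity by tracing a general element through the explicit descriptions. None of these verifications presents a genuine obstacle — the argument is entirely routine diagram-chasing — so the ``hard part'' is merely bookkeeping: being careful that every representative-choice and common-upper-bound choice washes out in the equivalence relation. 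Alternatively, and perhaps more cleanly, I would simply note that $\oplus$ is both a product and a coproduct (it is a finite biproduct), that colimits commute with coproducts automatically, and that in $\mathbf{Ab}$ over a \emph{directed} index set the forgetful functor to sets preserves and reflects these finite biproducts through the filtered-colimit formula; but for a self-contained low-tech lemma I would present the explicit construction above.
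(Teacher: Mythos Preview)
Your proposal is correct and follows essentially the same approach as the paper: the paper's proof consists of the single sentence ``Show that $\varinjlim A_i \oplus \varinjlim B_i$ satisfies the universal property that is unique to $\varinjlim (A_i \oplus B_i)$; we leave the details to the reader.'' You have simply supplied those details, constructing the cocone map and its inverse explicitly, which is exactly what the paper invites.
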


\begin{proof}
Show that $\varinjlim A_i \oplus \varinjlim B_i$ satisfies the universal property that is unique to $\varinjlim (A_i \oplus B_i)$. We leave the details to the reader.
\end{proof}

But there is no guarantee that a Grothendieck category possesses any nonzero objects of type $FP_{\infty}$. So we propose Definition~\ref{def-locally FP-infinity} below in the spirit of locally finitely generated and locally finitely presented categories. Recall that a Grothendieck category $\cat{G}$ is called \textbf{locally finitely generated} if it has a set of finitely generated generators. This is equivalent to saying that each $C \in \cat{G}$ is a direct union of finitely generated subobjects~\cite[pp.~122]{stenstrom}. $\cat{G}$ is called \textbf{locally finitely presented} if it has a set of finitely presented generators. This is equivalent to saying that each $C \in \cat{G}$ is a direct limit of finitely presented objects~\cite[Theorem~1.11]{adamek-rosicky}.

\begin{definition}\label{def-locally FP-infinity}
We say that $\cat{G}$ \textbf{locally type $\boldsymbol{FP_{\infty}}$} if it has a generating set consisting of objects of type $FP_{\infty}$.
\end{definition}

Note that any locally type $FP_{\infty}$ category is certainly locally finitely presented and hence locally finitely generated. The following lemma will prove to be important. It is based on similar results that can be found in the work of Stovicek. In particular, see~\cite[Proposition B.3]{stovicek-purity}.

\begin{lemma}\label{lemma-stoviceks lemma}
Let $\cat{G}$ be a Grothendieck category. Assume we have a generating set $\class{S}$ consisting of finitely generated objects such that $\class{S}$ is closed under both finite direct sums and taking kernels of epimorphisms between objects of $\class{S}$. Then the following hold.
\begin{enumerate}
\item Every finitely generated object is a quotient of an object in $\class{S}$.
\item For all $F \in \class{S}$ and $A \in \rightperp{\class{S}}$, we have $\Ext^2_{\cat{G}}(F,A) = 0$.
\item $\rightperp{\class{S}}$ is closed under taking cokernels of monomorphisms.
\item $(\leftperp{(\rightperp{\class{S}})},\rightperp{\class{S}})$ is an hereditary cotorsion pair, and it is small in the sense of~\cite[Def.~6.4]{hovey}.  Explicitly, the generating monomorphisms can be taken to be the set $I$ of all monos $F' \hookrightarrow F''$ with $F'$, $F''$, and $F''/F'$  all in $\class{S}$.
\end{enumerate}
\end{lemma}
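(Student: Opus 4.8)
The statement has four parts, and the natural strategy is to establish them in order, each one feeding the next. The engine behind everything is Stovicek's observation that under the stated closure hypotheses on $\class{S}$, one gets very tight control of $\rightperp{\class{S}}$ via a degreewise/dimension-shifting argument. I would first prove (1) as a warm-up: given a finitely generated $F$, write it as a quotient of a coproduct $\bigoplus_{j} G_j$ of generators from the original generating set; finite generation lets us cut down to a finite subcoproduct, and closure of $\class{S}$ under finite direct sums puts that finite coproduct in $\class{S}$. This is essentially a restatement of local finite generation together with the direct-sum closure.

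For (2), the key is a pair of short exact sequences. Given $F \in \class{S}$, use (1) to pick an epimorphism $S_0 \twoheadrightarrow F$ with $S_0 \in \class{S}$; its kernel $K$ is finitely generated (kernel of an epi between f.g. objects in a locally finitely generated category), so again by (1) there is an epimorphism $S_1 \twoheadrightarrow K$ with $S_1 \in \class{S}$ — but now observe that $K = \ker(S_0 \twoheadrightarrow F)$ and the hypothesis says $\class{S}$ is closed under kernels of epis \emph{between objects of $\class{S}$}; so actually one should set it up as $0 \to K \to S_0 \to F \to 0$ with $S_0, F \in \class{S}$, hence $K \in \class{S}$. Then for $A \in \rightperp{\class{S}}$, the long exact sequence in $\Ext_{\cat{G}}(-,A)$ gives $\Ext^1_{\cat{G}}(K,A) \to \Ext^2_{\cat{G}}(F,A) \to \Ext^2_{\cat{G}}(S_0, A)$. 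The left term vanishes because $K \in \class{S}$ and $A \in \rightperp{\class{S}}$; iterating (resolving $S_0$ the same way) pushes the $\Ext^2$ of $S_0$ up into higher Ext of objects of $\class{S}$, which by induction we can show also vanish — so $\Ext^2_{\cat{G}}(F,A) = 0$. In fact this same bootstrapping shows $\Ext^n_{\cat{G}}(F,A) = 0$ for all $n \geq 1$ and all $F \in \class{S}$, $A \in \rightperp{\class{S}}$, which is stronger and is what hereditariness in (4) will need.

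Part (3) is then a direct corollary: given a monomorphism $0 \to A \to B \to C \to 0$ with $A, B \in \rightperp{\class{S}}$, apply $\Ext_{\cat{G}}(F,-)$ for $F \in \class{S}$ to get $\Ext^1_{\cat{G}}(F,B) \to \Ext^1_{\cat{G}}(F,C) \to \Ext^2_{\cat{G}}(F,A)$; the outer terms vanish by definition of $\rightperp{\class{S}}$ and by (2) respectively, so $\Ext^1_{\cat{G}}(F,C) = 0$, i.e. $C \in \rightperp{\class{S}}$. For (4), that $(\leftperp{(\rightperp{\class{S}})}, \rightperp{\class{S}})$ is a cotorsion pair is the standard fact that any class of the form $\rightperp{\class{S}}$ and its left orthogonal form a cotorsion pair; completeness and smallness come from Hovey's version of the Eklof–Trlifaj theorem (\cite[Theorem~6.5 / Def.~6.4]{hovey}): one needs a \emph{set} of monomorphisms cogenerating the pair, and here $\cat{G}$ being well-powered plus $\class{S}$ being essentially small means the collection $I$ of all monos $F' \hookrightarrow F''$ with $F', F'', F''/F' \in \class{S}$ is (up to iso) a set; one checks $\rightperp{I} = \rightperp{\class{S}}$ using that every $F \in \class{S}$ sits in such a sequence (take $F' = 0$) and conversely every cokernel $F''/F'$ appearing is an extension governed by the relevant Ext groups. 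Finally hereditariness follows from the strengthened vanishing $\Ext^{\geq 1}_{\cat{G}}(F,A) = 0$ established in the course of proving (2), together with the characterization of $\leftperp{(\rightperp{\class{S}})}$ as (retracts of) transfinite extensions of objects of $\class{S}$, so that higher Ext from the whole left class into $\rightperp{\class{S}}$ still vanishes.

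**Main obstacle.** The delicate point is the inductive argument in (2): one must be careful that resolving $F$ by objects of $\class{S}$ stays inside $\class{S}$ at every stage — this is exactly where closure under kernels of epimorphisms \emph{between objects of $\class{S}$} is used, and it is essential that the syzygies remain finitely generated so that (1) keeps applying. A secondary subtlety is verifying $\rightperp{I} = \rightperp{\class{S}}$ for the smallness claim in (4): the inclusion $\rightperp{I} \subseteq \rightperp{\class{S}}$ uses $0 \hookrightarrow F$ for $F \in \class{S}$, while the reverse inclusion requires knowing that $\operatorname{Ext}^1$-vanishing against the quotients $F''/F'$ is automatic once one has vanishing against all of $\class{S}$ — which again leans on parts (2) and (3). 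I would present the strengthened "all higher Ext vanish" statement as an explicit sub-lemma so that both the hereditariness in (4) and the $\Ext^2$ claim in (2) fall out of it cleanly.
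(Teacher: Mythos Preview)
Your treatments of (1), (3), and the deduction of hereditariness in (4) from a higher-Ext vanishing statement are all fine and match the paper. The problem is your argument for (2), which is the heart of the lemma, and your proposed induction is circular.

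Concretely: from $0 \to K \to S_0 \to F \to 0$ with $K,S_0,F \in \class{S}$ you get
\[
\Ext^1_{\cat{G}}(K,A) \to \Ext^2_{\cat{G}}(F,A) \to \Ext^2_{\cat{G}}(S_0,A),
\]
and you correctly note that the left term vanishes. But this only gives an \emph{embedding} $\Ext^2_{\cat{G}}(F,A) \hookrightarrow \Ext^2_{\cat{G}}(S_0,A)$; to kill $\Ext^2_{\cat{G}}(F,A)$ you still need $\Ext^2_{\cat{G}}(S_0,A)=0$, which is exactly the statement you are trying to prove for another object of $\class{S}$. ``Resolving $S_0$ the same way'' does not push anything into \emph{higher} Ext---it just produces another embedding into $\Ext^2$ of yet another object of $\class{S}$. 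There is no induction available here: the base case you have is $\Ext^1$, and the long exact sequence in the first variable never trades $\Ext^2$ for $\Ext^1$ without an acyclicity assumption on the middle term that you do not possess (the $S_i$ are not projective, and $\cat{G}$ need not have any projectives at all). Dually, dimension-shifting on the $A$-side via $0 \to A \to I \to A' \to 0$ gives $\Ext^1_{\cat{G}}(F,A') \cong \Ext^2_{\cat{G}}(F,A)$, so (2) and (3) are equivalent and neither follows from $\Ext^1$-vanishing by shifting alone.

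The paper breaks this circularity by working directly with the Yoneda description of $\Ext^2$. Given a $2$-extension
\[
\epsilon:\quad 0 \to A \xrightarrow{k} X_2 \xrightarrow{f} X_1 \xrightarrow{c} F \to 0,
\]
one uses that $F$ is finitely generated to find a finitely generated $S \subseteq X_1$ with $c(S)=F$, then (1) to cover $S$ by some $F' \in \class{S}$ via $p:F' \twoheadrightarrow S$. Pulling back $f$ along $p$ produces a Yoneda-equivalent $2$-extension $\epsilon'$ whose middle short exact sequence is $0 \to A \to X'_2 \to \ker(cp) \to 0$. Now $cp:F' \to F$ is an epimorphism between objects of $\class{S}$, so by hypothesis $\ker(cp) \in \class{S}$; hence this short exact sequence splits because $A \in \rightperp{\class{S}}$, and one reads off that $\epsilon' \sim \sigma$ (the split $2$-sequence). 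This is where the closure-under-kernels hypothesis is actually cashed in---not to build a resolution of $F$, but to recognize a single kernel appearing in the Yoneda representative.

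A similar hands-on argument is used in the paper for the smallness claim in (4): rather than checking $\rightperp{I} = \rightperp{\class{S}}$ via Ext as you sketch, one takes an arbitrary short exact sequence $0 \to C \to X \to F \to 0$ with $F \in \class{S}$, constructs a map \emph{to} it from a sequence $0 \to F' \to F'' \to F \to 0$ with all terms in $\class{S}$ (same pullback trick), and then invokes the lifting property against $F' \hookrightarrow F''$ plus the homotopy lemma to split the original sequence.
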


\begin{proof}
(1) is easy. Indeed if $F$ is finitely generated, then since $\class{S}$ is generating we can find an epimorphism $\oplus_{i \in I} F_i \twoheadrightarrow F$ where each $F_i \in \class{S}$. Write $F = \Sigma_{i \in I} F'_i$ where $F'_i = \im{(F_i \hookrightarrow \oplus F_i \twoheadrightarrow F)}$. Since $F$ is finitely generated, there exists a finite subset $J \subseteq I$ such that $F = \Sigma_{i \in J} F'_i$. This means $\oplus_{i \in J} F_i \twoheadrightarrow F$ is still an epimorphism. By hypothesis, $\oplus_{i \in J} F_i \in \class{S}$.

For (2), let $F \in \class{S}$ and $A \in \rightperp{\class{S}}$. Recall the Yoneda description of the group $\Ext^2_{\cat{G}}(F,A)$. Its elements are equivalence classes of exact sequences of the form
$$\epsilon: \ \  0 \xrightarrow{} A \xrightarrow{} X_2  \xrightarrow{} X_1 \xrightarrow{} F \xrightarrow{} 0$$ As described in~\cite[pp.~79]{weibel}, the equivalence relation is \emph{generated} by the relation $\sim$, where $\epsilon' \sim \epsilon$ means there exists some commutative diagram of the form
$$\begin{CD}
\epsilon' : \ \ 0 @>>>  A @>>> X'_2 @>>> X'_1 @>>> F  @>>> 0 \\
@. @|  @VVV @VVV @|  @.\\
\epsilon : \ \ 0 @>>> A @>>>  X_2  @>>> X_1 @>>> F  @>>> 0 \\
\end{CD}$$
Now let $\epsilon \in \Ext^2_{\cat{G}}(F,A)$ be arbitrary. Our goal is to show that it is equivalent to the split 2-sequence $\sigma : \ \ 0 \xrightarrow{} A \xrightarrow{} A \oplus X  \xrightarrow{} X \oplus F \xrightarrow{} F \xrightarrow{} 0$.  We write
$\epsilon: \ \  0 \xrightarrow{} A \xrightarrow{k} X_2  \xrightarrow{f} X_1 \xrightarrow{c} F \xrightarrow{} 0$, and note that, by~\cite[Lemma~V.3.3]{stenstrom}, we can find a finitely generated subobject $S \subseteq X_1$ such that $c(S) = F$. By (1), there is an epimorphism $F' \xrightarrow{p} S$ where $F' \in \class{S}$. Letting $X'_2$ denote the pullback of $X_2 \xrightarrow{f} X_1 \xleftarrow{p} F'$, one constructs a morphism of exact 2-sequences:
$$\begin{CD}
\epsilon' : \ \ 0 @>>>  A @>k'>> X'_2 @>f'>> F' @>cp>> F  @>>> 0 \\
@. @|  @VVp' V @VVpV @|  @.\\
\epsilon : \ \ 0 @>>> A @>k>>  X_2  @>f>> X_1 @>c>> F  @>>> 0 \\
\end{CD}$$
Looking at the short exact sequence $0 \xrightarrow{} A \xrightarrow{k'} X'_2  \xrightarrow{f'} \im{f'} \xrightarrow{} 0$, we note that $\im{f'} = \ker{(cp)} \in \class{S}$, by hypothesis. Since $A \in \rightperp{\class{S}}$, this means the short exact sequence splits. Using this fact, one can now easily construct a morphism of exact 2-sequences, showing $\epsilon' \sim \sigma$. This means that $\sigma$, $\epsilon'$, and $\epsilon$ all represent the same element, namely zero, in the Yoneda description of $\Ext^2_{\cat{G}}(F,A)$.

(3) follows easily from (2). Indeed let $0 \xrightarrow{} A \xrightarrow{} A' \xrightarrow{}  C  \xrightarrow{} 0$ be a short exact sequence with $A,A' \in \rightperp{\class{S}}$. For $F \in \class{S}$, apply $\Hom_{\cat{G}}(F,-)$, and the corresponding long exact sequence shows $\Ext^1_{\cat{G}}(F,C) = 0$.

We now focus on (4). It is clear that $\class{S}$ cogenerates a cotorsion pair $(\leftperp{(\rightperp{\class{S}})},\rightperp{\class{S}})$. We refer to~\cite[Definition~6.4]{hovey} for the definition of a small cotorsion pair. Considering the hypotheses on $\class{S}$, and property (1) above, it is enough to show that if an object $C \in \cat{G}$ is injective with respect to the set of all monomorphisms $F' \hookrightarrow F''$ with $F'$ , $F''$, and $F''/F'$  each in $\class{S}$, then $C \in \rightperp{\class{S}}$.
So let $C \in \class{G}$ have this extension property and let $0 \xrightarrow{} C \xrightarrow{} X \xrightarrow{} F \xrightarrow{} 0$ be any short exact sequence with $F \in \class{S}$. The proof will be complete once we show this sequence splits. But using a variation on the argument proving the above part~(2), we can construct a morphism of short exact sequences with $F' F''  \in \class{S}$.
$$\begin{CD}
0 @>>> F' @>>> F'' @>>> F  @>>> 0 \\
@. @VVV @VVV @| @. \\
0 @>>> C  @>>> X @>>> F  @>>> 0. \\
\end{CD}$$
The assumption on $C$ means there is a morphism $F'' \xrightarrow{} C$ producing a commutative triangle in the upper left corner. This is in fact equivalent, by a fact sometimes called ``the homotopy lemma'', to a map $F \xrightarrow{} X$ producing a commutative triangle in the lower right corner.  This is a splitting.
\end{proof}

With Lemma~\ref{lemma-stoviceks lemma}, and the appropriate setting of a locally type $FP_{\infty}$ category, we may now go on to define absolutely clean objects. We are following~\cite{bravo-gillespie-hovey}.

\begin{definition}\label{def-abs-clean}
Let $\cat{G}$ be a locally type $FP_{\infty}$ category. Let $A \in \cat{G}$ be an object of $\cat{G}$ and
let $\epsilon : 0 \xrightarrow{} X \xrightarrow{} Y \xrightarrow{} Z \xrightarrow{}  0$ be a short exact sequence.
\begin{itemize}
\item We say $\epsilon$ is \textbf{clean} if $\Hom_{\cat{G}}(F, \epsilon)$ remains exact for each $F$ of type $FP_{\infty}$.

\item We say that $A$ is \textbf{absolutely clean} if $\Ext^1_{\cat{G}}(F,A) = 0$ for all $F$ of type $FP_{\infty}$. We denote the class of all absolutely clean objects by $\class{AC}$.
\end{itemize}
\end{definition}

Note that $A$ is absolutely clean if and only if each short exact sequence starting with $A$ is clean. We will now see that the class of absolutely clean objects possesses many nice properties. First, observe that if $\cat{G}$ is locally type $FP_{\infty}$, then the hypotheses of Lemma~\ref{lemma-stoviceks lemma} are satisfied when we take $\class{S}$ to be a set of isomorphism representatives for the class of all objects of type $FP_{\infty}$. Denoting this choice of the set $\class{S}$ by $FP_{\infty}(\cat{G})$, the following corollary is immediate.

\begin{proposition}\label{prop-abs-clean-cotorsion-pair}
Let $\cat{G}$ be a locally type $FP_{\infty}$ category. Then $(\leftperp{\class{AC}},\class{AC})$ is an hereditary cotorsion pair. It is small, and hence (functorially) complete. Explicitly, we can take the set $I$ of generating monomorphisms to be the set of all monomorphisms $F' \hookrightarrow F''$ with $F'$, $F''$, and $F''/F'$  all in $FP_{\infty}(\cat{G})$.
\end{proposition}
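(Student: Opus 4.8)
The plan is to obtain this as a direct application of Lemma~\ref{lemma-stoviceks lemma} to the set $\class{S} = FP_{\infty}(\cat{G})$ of isomorphism representatives of the objects of type $FP_{\infty}$. First I would confirm that $\class{S}$ is legitimately a \emph{set}: every object of type $FP_{\infty}$ is finitely presented, hence $\aleph_{0}$-presented, so the essential smallness of the class of $\gamma$-presented objects recalled in Section~\ref{subsec-Grothendieck cats} (with $\gamma = \aleph_{0}$) applies. Since $\cat{G}$ is locally type $FP_{\infty}$ it possesses a generating set of $FP_{\infty}$ objects, and up to isomorphism this set sits inside $\class{S}$, so $\class{S}$ is itself a generating set.

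Next I would check the structural hypotheses of Lemma~\ref{lemma-stoviceks lemma}. Each member of $\class{S}$ is finitely generated, since type $FP_{\infty}$ implies finitely presented implies finitely generated (noted after Definition~\ref{def-objects of type FP-infinity}). Closure of $\class{S}$ under finite direct sums, and under kernels of epimorphisms between its members, are both instances of the thickness of the $FP_{\infty}$ subcategory established in Proposition~\ref{prop-thickness of FP-infinity}: for the latter, if $F \twoheadrightarrow G$ with $F, G$ of type $FP_{\infty}$, apply two-out-of-three to $0 \to \ker \to F \to G \to 0$.

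The one substantive identification is $\rightperp{\class{S}} = \class{AC}$. Because $\Ext^{1}_{\cat{G}}$ is invariant under isomorphism, demanding $\Ext^{1}_{\cat{G}}(F,A) = 0$ for all $F \in \class{S}$ is equivalent to demanding it for all $F$ of type $FP_{\infty}$, which is exactly Definition~\ref{def-abs-clean}; hence also $\leftperp{(\rightperp{\class{S}})} = \leftperp{\class{AC}}$. Lemma~\ref{lemma-stoviceks lemma}(4) then delivers at once that $(\leftperp{\class{AC}},\class{AC})$ is an hereditary cotorsion pair, small in the sense of~\cite[Def.~6.4]{hovey}, with the displayed set $I$ of generating monomorphisms. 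That small cotorsion pairs are (functorially) complete, via the small object argument, is recorded in~\cite{hovey}, which finishes the proof.

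I do not anticipate a real obstacle: every ingredient is already available, and the argument amounts to recording the substitution $\class{S} = FP_{\infty}(\cat{G})$. The only points deserving a sentence of care are the set-theoretic one above and the remark that closure of $\class{S}$ under kernels of epimorphisms is subsumed by Proposition~\ref{prop-thickness of FP-infinity} and needs no fresh argument.
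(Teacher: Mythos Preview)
Your proposal is correct and follows exactly the approach the paper takes: the paper also just observes that the hypotheses of Lemma~\ref{lemma-stoviceks lemma} are satisfied for $\class{S} = FP_{\infty}(\cat{G})$ and declares the proposition an immediate corollary. You have simply spelled out in more detail the verifications (that $\class{S}$ is a set, generating, finitely generated, and closed under finite sums and kernels of epimorphisms via Proposition~\ref{prop-thickness of FP-infinity}) that the paper leaves implicit.
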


The next proposition is the analog of~\cite[Prop.~2.5]{bravo-gillespie-hovey}.

\begin{proposition}\label{prop-abs-clean-properties}
Let $\cat{G}$ be a locally type $FP_{\infty}$ category. Then the class $\class{AC}$ of absolutely clean objects satisfies the following:
\begin{enumerate}
\item The class $\class{AC}$ of absolutely clean objects is coresolving; that is, it contains the injectives and is closed under extensions and cokernels of monomorphisms.

\item If $A$ is absolutely clean, then $\Ext^n_{\cat{G}}(F,A) = 0$ for all $n > 0$ and $F$ of type $FP_{\infty}$.

\item The class $\class{AC}$ of absolutely clean objects is closed under pure subobjects and pure quotients. In fact, it is closed under clean subobjects and clean quotients.

\item The class $\class{AC}$ of absolutely clean objects is closed under direct products, direct sums, retracts, direct limits, and transfinite extensions.
\end{enumerate}
\end{proposition}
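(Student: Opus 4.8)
The plan is to derive all four parts from the cotorsion-pair description in Proposition~\ref{prop-abs-clean-cotorsion-pair} — which in particular identifies $\class{AC}$ with $\rightperp{FP_{\infty}(\cat{G})}$ and records that this pair is hereditary — together with Lemma~\ref{lemma-stoviceks lemma}(3) and the one genuinely new ingredient: the defining property that $\Ext^n_{\cat{G}}(F,-)$ preserves direct limits when $F$ is of type $FP_{\infty}$. The logical order matters, since (1) feeds into (2), and (1) together with the direct-limit case of (4) feeds into the transfinite-extension case of (4) and into (3).

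First I would prove (1). That $\class{AC}$ contains the injectives and is closed under extensions is immediate from the long exact sequence obtained by applying $\Hom_{\cat{G}}(F,-)$ to a short exact sequence, for $F$ of type $FP_{\infty}$: the term $\Ext^1_{\cat{G}}(F,A)$ vanishes for injective $A$, and if the two outer $\Ext^1$-terms vanish so does the middle one. Closure under cokernels of monomorphisms is exactly Lemma~\ref{lemma-stoviceks lemma}(3) applied to the generating set $\class{S} = FP_{\infty}(\cat{G})$, using $\class{AC} = \rightperp{\class{S}}$. Part (2) then follows either by invoking hereditariness of $(\leftperp{\class{AC}},\class{AC})$ from Proposition~\ref{prop-abs-clean-cotorsion-pair} (noting $F \in \leftperp{\class{AC}}$), or, self-containedly, by dimension shifting: embed $A$ in an injective $I$, so $I/A \in \class{AC}$ by part (1), and $\Ext^n_{\cat{G}}(F,A) \cong \Ext^{n-1}_{\cat{G}}(F,I/A)$ for $n \geq 2$, which inducts down to the case $n=1$.

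For (4), closure under direct products uses that $\class{AC}$ is a right-orthogonal class: an extension of $F$ by $\prod_i A_i$ splits once each of its pushouts along the projections $\prod_i A_i \to A_j$ splits, so $\Ext^1_{\cat{G}}(F,\prod_i A_i) = 0$ — and this assembly of the pushout extensions never requires exactness of products in $\cat{G}$. Retracts are automatic since $\Ext^1_{\cat{G}}(F,-)$ is additive. The heart of (4) is closure under direct limits: if each $A_i \in \class{AC}$ then $\Ext^1_{\cat{G}}(F,\varinjlim A_i) \cong \varinjlim \Ext^1_{\cat{G}}(F,A_i) = 0$ precisely because $F$ is of type $FP_{\infty}$; this is the one place the strengthened finiteness hypothesis is essential rather than mere finite presentation. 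Direct sums follow by writing $\bigoplus_i A_i$ as the direct limit of its finite subsums, each in $\class{AC}$ by repeated closure under extensions. Transfinite extensions then follow by transfinite induction: at successor stages apply closure under extensions, and at limit stages the continuous union is a direct limit of objects already known to lie in $\class{AC}$, so closure under direct limits applies.

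Finally (3). Since every object of type $FP_{\infty}$ is finitely presented, every pure exact sequence is clean, so it suffices to treat clean subobjects and clean quotients. Given a clean short exact sequence $0 \to X \to Y \to Z \to 0$ with $Y \in \class{AC}$, apply $\Hom_{\cat{G}}(F,-)$ for $F$ of type $FP_{\infty}$: cleanness makes $\Hom_{\cat{G}}(F,Y) \to \Hom_{\cat{G}}(F,Z)$ surjective, so the connecting map into $\Ext^1_{\cat{G}}(F,X)$ is zero, whence $\Ext^1_{\cat{G}}(F,X) \hookrightarrow \Ext^1_{\cat{G}}(F,Y) = 0$ and $X \in \class{AC}$; then $Z = Y/X \in \class{AC}$ by part (1). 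I expect no serious obstacle: the work is bookkeeping, and the only points requiring real care are running the four parts in the correct order, pinning down that "type $FP_{\infty}$" (versus "finitely presented") is used exactly in the direct-limit closure of (4), and checking that the product argument genuinely avoids any appeal to exactness of products in $\cat{G}$, which can fail in a general Grothendieck category.
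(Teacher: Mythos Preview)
Your proof is correct and follows essentially the same route as the paper, just with considerably more detail filled in where the paper is terse (the paper dispatches (1) as ``clear'', (2) by ``dimension shifting'', and (4) largely by the single remark that $\class{AC}$ is the right-hand side of a cotorsion pair). Your extra care about products---verifying the splitting argument directly rather than appealing to exactness of products---is a nice touch that the paper elides, and your explicit ordering (1)$\Rightarrow$(2), then (4), then (3) using (1)) matches the actual logical dependencies.
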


\begin{proof}
(1) is clear, and (2) follows from the fact that it holds for $n = 2$ along with a ``dimension shifting'' argument.

For (3), let $0 \xrightarrow{} C \xrightarrow{} A \xrightarrow{} A/C \xrightarrow{}  0$ be a clean exact sequence, with $A \in \class{AC}$. Then it is easy to argue that $C \in \class{AC}$, and hence $A/C \in \class{AC}$ too.

For (4), we see that $\class{AC}$ is closed under extensions, direct products, and retracts, since it is the right hand side of a cotorsion pair. $\class{AC}$ is closed under direct limits since $\Ext^1_{\cat{G}}(F,-)$
commutes with direct limits whenever $F$ is type $FP_{\infty}$. Now since $\class{AC}$ is closed under both extensions and direct limits, it is therefore closed under transfinite extensions. Finally, $\class{AC}$ is closed under direct sums, since any direct sum can be realized as the direct limit of all the finite sums of the direct summands.
\end{proof}

\begin{proposition}\label{prop-abs-clean-deconstructible}
Let $\cat{G}$ be a locally type $FP_{\infty}$ category. Then there is a set $\class{S}$ of absolutely clean objects such that every absolutely clean object is a transfinite extension of $\class{S}$. Equivalently, there exists a regular cardinal $\gamma$ such that each absolutely clean object is a transfinite extension of $\gamma$-presented absolutely clean objects.
\end{proposition}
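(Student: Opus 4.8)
The plan is to produce, for a suitably large regular cardinal $\gamma$, small \emph{clean} subobjects of any absolutely clean object by a Löwenheim--Skolem argument, and then to run a transfinite induction that is powered entirely by the closure properties of $\class{AC}$ recorded in Proposition~\ref{prop-abs-clean-properties}. The two ingredients are: (a) the purity/accessibility machinery of locally presentable categories, which lets one shrink to a $\gamma$-presented subobject while keeping a clean quotient; and (b) Proposition~\ref{prop-abs-clean-properties}(3), which says that clean subobjects and clean quotients of absolutely clean objects are again absolutely clean. I expect the main obstacle to lie in (a): pinning down precisely how large $\gamma$ must be and invoking the correct form of the decomposition below, together with the (easy) observation that a $\gamma$-pure monomorphism is clean once $\gamma$ dominates the objects of type $FP_\infty$.

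Concretely, I would fix a regular cardinal $\gamma$ large enough that: every object of type $FP_\infty$ is $\gamma$-presented (automatic, as these objects are finitely presented); $\cat{G}$ is locally $\gamma$-presentable (automatic, since a locally type $FP_\infty$ category is locally finitely presented); and the standard $\gamma$-purity decomposition holds, namely that every $A \in \cat{G}$ is the $\gamma$-directed union of a family $\{X_i\}$ of $\gamma$-presented subobjects for which each inclusion $X_i \hookrightarrow A$ is $\gamma$-pure, meaning $\Hom_{\cat{G}}(P,A) \to \Hom_{\cat{G}}(P,A/X_i)$ is surjective for every $\gamma$-presented $P$. (Such decompositions are available for all sufficiently large $\gamma$; this is part of the general theory of locally presentable categories, see~\cite{adamek-rosicky}, and is in the spirit of the purity arguments of~\cite{stovicek-purity}.) Since objects of type $FP_\infty$ are among the $\gamma$-presented objects, a $\gamma$-pure monomorphism $X \hookrightarrow A$ automatically makes $0 \to X \to A \to A/X \to 0$ a \emph{clean} short exact sequence in the sense of Definition~\ref{def-abs-clean}. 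This yields the key lemma: for every $A \in \cat{G}$ and every $\gamma$-generated subobject $Y_0 \subseteq A$, there is a $\gamma$-presented subobject $X$ with $Y_0 \subseteq X \subseteq A$ such that $0 \to X \to A \to A/X \to 0$ is clean. Indeed, writing $A = \bigcup_i X_i$ as above, the fact that $Y_0$ is $\gamma$-generated and that the union is $\gamma$-directed with $\gamma$ regular forces $Y_0 \subseteq X_i$ for some $i$, and we take $X = X_i$. Combined with Proposition~\ref{prop-abs-clean-properties}(3): if in addition $A \in \class{AC}$, then $X \in \class{AC}$ and $A/X \in \class{AC}$.

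With this tool in hand, given $A \in \class{AC}$ I would construct a continuous chain $0 = B_0 \subseteq B_1 \subseteq \cdots$ of subobjects of $A$ by transfinite recursion, maintaining the invariant $B_\alpha \in \class{AC}$ and $A/B_\alpha \in \class{AC}$. At a successor step, if $B_\alpha \neq A$ then $A/B_\alpha$ is a nonzero object of $\class{AC}$, hence has a nonzero finitely generated (so $\gamma$-generated) subobject $Y_0$; applying the key lemma to $A/B_\alpha$ produces a $\gamma$-presented $\bar{X}$ with $Y_0 \subseteq \bar{X} \subseteq A/B_\alpha$, $\bar{X} \in \class{AC}$, and $(A/B_\alpha)/\bar{X} \in \class{AC}$. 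Letting $B_{\alpha+1}$ be the preimage of $\bar{X}$ under $A \twoheadrightarrow A/B_\alpha$, we get $B_{\alpha+1}/B_\alpha \cong \bar{X}$ (which is $\gamma$-presented and absolutely clean), $B_\alpha \subsetneq B_{\alpha+1}$, $B_{\alpha+1} \in \class{AC}$ (an extension of two absolutely clean objects, by Proposition~\ref{prop-abs-clean-properties}(1)), and $A/B_{\alpha+1} \cong (A/B_\alpha)/\bar{X} \in \class{AC}$. At a limit ordinal $\lambda$ I set $B_\lambda = \bigcup_{\alpha<\lambda} B_\alpha$; then $B_\lambda \in \class{AC}$ and $A/B_\lambda = \varinjlim_{\alpha<\lambda} A/B_\alpha \in \class{AC}$ because $\class{AC}$ is closed under direct limits (Proposition~\ref{prop-abs-clean-properties}(4)), and continuity holds by construction. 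Since $\cat{G}$ is well-powered, this strictly increasing chain of subobjects of $A$ must stabilize, necessarily at some $B_\kappa = A$, and the chain $(B_\alpha)_{\alpha \leq \kappa}$ exhibits $A$ as a transfinite extension of $\gamma$-presented absolutely clean objects. Finally, taking $\class{S}$ to be a set of isomorphism representatives of the $\gamma$-presented absolutely clean objects — a set by~\cite[Corollary~1.69]{adamek-rosicky} — gives the first formulation, and the equivalence of the two formulations is routine (enlarge $\gamma$ to a regular cardinal above the presentability ranks of the members of $\class{S}$).
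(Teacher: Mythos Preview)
Your proposal is correct and follows essentially the same approach as the paper's proof: both invoke the purity machinery of locally presentable categories (the paper via~\cite[Theorem~2.33]{adamek-rosicky}) to find, for a suitable regular cardinal $\gamma$, a nonzero $\gamma$-presented pure (hence clean) subobject inside any absolutely clean object, then apply Proposition~\ref{prop-abs-clean-properties}(3) and run the obvious transfinite recursion. The paper simply defers the recursion to~\cite[Prop.~2.6]{bravo-gillespie-hovey}, whereas you spell it out, and you package the purity input as a full $\gamma$-directed decomposition rather than mere existence of one small pure subobject; note that for the argument you only need the inclusions to be pure (i.e., $\aleph_0$-pure), not $\gamma$-pure, which is all the cited result guarantees and all that is required to conclude cleanness.
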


\begin{proof}
Since $\cat{G}$ is a locally type $FP_{\infty}$ category, it is in particular locally finitely presentable. So
an application of~\cite[Theorem~2.33]{adamek-rosicky} immediately leads to the following:

\noindent \underline{Fact}: There exists a regular cardinal $\gamma$ such that for any object $C \in \cat{G}$, there exists a nonzero pure subobject $P \subseteq C$ such that $P$ is $\gamma$-presented.

Moreover, by~\cite[Corollary~1.69]{adamek-rosicky}, the class  of all $\gamma$-presented objects is essentially small. Let $\textbf{Pres}_{\gamma}\cat{G}$ denote a set of isomorphism representatives for this class.  Finally, set $\class{S} = \textbf{Pres}_{\gamma}\cat{G} \cap \class{AC}$. Now the argument from~\cite[Prop.~2.6 ]{bravo-gillespie-hovey} shows that every absolutely clean object is a transfinite extension of $\class{S}$. Although that proposition is proved for modules over a ring, the above observations make it clear that the argument carries to our setting. The properties of purity used in the argument can be found in~\cite[Appendix~A]{gillespie-G-derived}, stated in the current generality.
\end{proof}

\subsection{Examples of locally type $\boldsymbol{FP_{\infty}}$ categories}\label{subsec-Examples of FP-infinity categories}
The remainder of this section turns to provide examples of Grothendieck categories of type $FP_{\infty}$.

\begin{example}\label{example-modules}
For any ring $R$, the category $R$-Mod of (left) $R$-modules is locally type $FP_{\infty}$. Indeed, ${}_RR$ is a generator of type $FP_{\infty}$. In fact, Example~\ref{Example-fg-projectives} provides an easy generalization of this: Any Grothendieck category possessing a generating set consisting of finitely generated projective objects must be locally type $FP_{\infty}$.
\end{example}

\begin{example}\label{example-chain complexes}
It is well known that if $\cat{G}$ is a Grothendieck category, then so is the chain complex category $\cha{G}$. If $\{G_i\}$ is a generating set for $\cat{G}$, then $\{D^n(G_i)\}$ is a generating set for $\cha{G}$. The notation $D^n(A)$ denotes the complex that is $A$ in degrees $n$ and $n-1$, connected by $1_A$, and 0 elsewhere. One can show that there are natural isomorphisms $\Ext^m_{\cha{G}}(D^n(A),Y) \cong \Ext^m_{\cat{G}}(A,Y_n)$ for all $m \geq 0$. It follows that if $\cat{G}$ is locally type $FP_{\infty}$, then so is $\cha{G}$.
\end{example}

\begin{example}\label{example-sheaves}
Locally type $FP_{\infty}$ categories naturally arise in algebraic geometry as categories of sheaves.
We will explain this briefly and refer to Murfet's notes~\cite{murfet-notes} for further details.

Following~\cite[CON: Def.~3]{murfet-notes}, a scheme $X$ is called \emph{concentrated} if it is quasi-compact and quasi-separated. By~\cite[COS: Lemma~8]{murfet-notes} we see that a scheme $X$ is concentrated if and only if its underlying topological space is \emph{quasi-noetherian}. So the following proposition gives a nice class of Grothendieck categories of type $FP_{\infty}$.

\begin{proposition}
Let $(X,\class{O}_X)$ be a quasi-noetherian ringed space. For example, let $X$ be a concentrated scheme. Then the category $\class{O}_X$-Mod of all sheaves of $\class{O}_X$-modules is a locally type $FP_{\infty}$ category.
\end{proposition}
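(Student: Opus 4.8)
The plan is to produce an explicit generating set of $\class{O}_X$-Mod consisting of objects of type $FP_{\infty}$. Since $X$ is quasi-noetherian, the quasi-compact open subsets of $X$ form a basis $\class{B}$ for the topology which is moreover closed under finite intersections; in particular each $U \in \class{B}$, with the induced topology and $\class{O}_X|_U$, is again a quasi-noetherian ringed space. For $U \in \class{B}$ let $j \colon U \hookrightarrow X$ be the inclusion and set $\class{O}_U := j_!(\class{O}_X|_U)$, the extension by zero of the structure sheaf of $U$. First I would record the standard fact that $\{\class{O}_U\}_{U \in \class{B}}$ is a generating set: one has $\Hom_{\class{O}_X}(\class{O}_U, \class{F}) \cong \class{F}(U)$ naturally in $\class{F}$, and since $\class{B}$ is a basis, any proper subobject $\class{A} \subsetneq \class{F}$ admits a section $s \in \class{F}(U) \setminus \class{A}(U)$ for some $U \in \class{B}$, yielding a map $\class{O}_U \to \class{F}$ that does not factor through $\class{A}$.

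The crux is to show that each $\class{O}_U$ is of type $FP_{\infty}$, and for this I would translate the condition into one about sheaf cohomology. The functor $j_!$ is exact and is left adjoint to the restriction functor $j^* = (-)|_U$; being the right adjoint of an exact functor, $j^*$ preserves injectives, so it carries an injective resolution of $\class{F}$ in $\class{O}_X$-Mod to an injective resolution of $\class{F}|_U$ in $\class{O}_U$-Mod. The adjunction isomorphism then derives to natural isomorphisms
\[
\Ext^{n}_{\class{O}_X}(\class{O}_U,\class{F}) \;\cong\; \Ext^{n}_{\class{O}_U}(\class{O}_X|_U,\class{F}|_U) \;\cong\; H^{n}(U,\class{F}|_U) \qquad (n \geq 0),
\]
where $H^n(U,-)$ is ordinary sheaf cohomology. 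Since $j^*$ is also a left adjoint it commutes with direct limits, so $(\varinjlim_i \class{F}_i)|_U \cong \varinjlim_i (\class{F}_i|_U)$, and one checks that the canonical comparison maps $\xi_n$ correspond under these identifications. Consequently $\class{O}_U$ is of type $FP_{\infty}$ if and only if each functor $H^{n}(U,-)$ commutes with direct limits of sheaves.

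The one substantive ingredient that remains---and the place where the hypothesis on $X$ is genuinely used---is precisely that on a quasi-noetherian space the cohomology functors $H^{n}(U,-)$ commute with direct limits; this is the technical heart of the matter, and it is established in Murfet's notes~\cite{murfet-notes}. The case $n = 0$ is just the quasi-compactness together with the quasi-separatedness packaged into ``quasi-noetherian''; for $n \geq 1$ one chooses a finite cover of $U$ by members of $\class{B}$, uses that all of its finite intersections again lie in $\class{B}$ (hence are quasi-compact), and induces via the \v{C}ech-to-derived-functor spectral sequence. Granting this, the displayed isomorphisms show that every $\class{O}_U$ with $U \in \class{B}$ is of type $FP_{\infty}$, so $\{\class{O}_U\}_{U \in \class{B}}$ is a generating set of $FP_{\infty}$ objects and $\class{O}_X$-Mod is locally type $FP_{\infty}$. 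The case of a concentrated scheme follows immediately, since by~\cite[COS: Lemma~8]{murfet-notes} its underlying topological space is quasi-noetherian.
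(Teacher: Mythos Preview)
Your proof is correct and follows essentially the same approach as the paper: both take the generators $j_!(\class{O}_X|_U)$ for $U$ ranging over the quasi-compact open basis, identify $\Ext^n(j_!(\class{O}_X|_U),-)$ with sheaf cohomology $H^n(U,(-)|_U)$, and then invoke Murfet's notes for the fact that on a quasi-noetherian space these cohomology functors commute with direct limits. You supply more detail than the paper does---in particular the adjunction argument deriving the $\Ext$-cohomology identification and a sketch of the \v{C}ech spectral sequence argument---but the strategy is identical.
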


\begin{proof}
Recall that there is a standard set of generators for $\class{O}_X$-Mod. It is the set of $\class{O}_X$-modules $\{j!(\class{O}_U)\}$ where $U \subseteq X$ ranges through the open subsets of $X$. In fact it is enough to let $U \subseteq X$ range through just a basis for $X$~\cite[MRS: Cor.~31]{murfet-notes}. But the definition of a quasi-noetherian space $X$ implies that the quasi-compact open subsets of $X$ form a basis for $X$.
So we will be done once we show that $j!(\class{O}_U)$ is of type $FP_{\infty}$ whenever $U \subseteq X$ is a quasi-compact open subset. Now there is an isomorphism $$\Ext^n(j!(\class{O}_U), -) \cong  H^n(U,(-)|_U).$$ Moreover, a quasi-compact open subset $U \subseteq X$ is itself quasi-noetherian~\cite[see COS: Def.~4]{murfet-notes}. So finally, combining~\cite[see COS: proof of Prop.~23]{murfet-notes} and~\cite[see COS: Them~26]{murfet-notes} we conclude that $\Ext^n(j!(\class{O}_U), -)$ preserves direct limits.
\end{proof}
\end{example}

\subsection{Locally noetherian categories}\label{subsec-locally noetherian categories}
Here we show in detail how locally noetherian categories are particularly nice locally type $FP_{\infty}$ categories. Recall that an object $C$ in a Grothendieck category $\cat{G}$ is called \textbf{noetherian} if its lattice of subobjects satisfies the ascending chain condition. That is, there is no strictly ascending infinite chain of subobjects of $C$. It is equivalent to say that each subobject of $C$ is finitely generated. We then say the category $\cat{G}$ is \textbf{locally noetherian} if it has a generating set consisting of noetherian objects. We refer the reader to the classic reference~\cite[Section~V.4]{stenstrom} for basic information.  Note that every locally noetherian category is locally finitely generated.

\begin{proposition}\cite[Prop.~V.4.2.]{stenstrom}\label{prop-noetherian objects serre}
Let $\cat{G}$ be any Grothendieck category. Then the class of noetherian objects forms a Serre subcategory.
\end{proposition}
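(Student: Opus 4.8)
The plan is to verify directly the defining biconditional of a Serre subcategory from Definition~\ref{def-thick abelian-serre}(3): for any short exact sequence $0 \to A \to B \to C \to 0$ in $\cat{G}$, the object $B$ should be noetherian if and only if both $A$ and $C$ are. Repleteness and the fact that $0$ is noetherian are immediate, so only the biconditional needs attention. Throughout I would use the characterization recorded in the excerpt — $C$ is noetherian precisely when its subobject lattice satisfies the ascending chain condition (ACC) — together with the fact that a Grothendieck category is well-powered, so that these subobject lattices are genuine sets.

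For the ``only if'' direction I would argue as follows. If $B$ is noetherian, then every ascending chain of subobjects of $A$ is in particular an ascending chain of subobjects of $B$, hence stabilizes, so $A$ is noetherian; and since $C \cong B/A$, the correspondence theorem identifies the subobject lattice of $C$, order-preservingly, with the sublattice of subobjects of $B$ containing $A$, so ACC passes to $C$ as well.

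The heart of the matter is the ``if'' direction: assuming $A$ and $C$ noetherian, with quotient map $\pi \colon B \to C$, I would take an arbitrary ascending chain $U_0 \subseteq U_1 \subseteq \cdots$ of subobjects of $B$, form the two ascending chains $\{U_n \cap A\}$ in $A$ and $\{\pi(U_n)\}$ in $C$, and use the noetherian hypotheses to produce an $N$ with $U_n \cap A = U_N \cap A$ and $\pi(U_n) = \pi(U_N)$ for all $n \geq N$. It then remains to see that these two stabilizations force $U_n = U_N$. Here $\pi(U_n) = \pi(U_N)$ gives $U_n \subseteq \pi^{-1}\pi(U_N) = U_N + A$, so $U_n = U_n \cap (U_N + A)$, and the modular law for the subobject lattice of $B$ (applicable since $U_N \subseteq U_n$) yields $U_n \cap (U_N + A) = U_N + (U_n \cap A) = U_N + (U_N \cap A) = U_N$. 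Thus the chain stabilizes at $N$ and $B$ is noetherian.

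I do not anticipate a real obstacle; the one place that calls for care is the lattice-theoretic step — that intersections and sums of subobjects in an abelian category obey the modular law, and that $\pi^{-1}$ behaves as expected on the subobject $\pi(U_N)$. These are standard properties of abelian categories, provable directly via pullbacks and pushouts or by reduction to modules, and with them in place the proof is the familiar ``noetherian objects are closed under subobjects, quotients, and extensions'' argument carried out in $\cat{G}$.
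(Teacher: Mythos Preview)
Your argument is correct and is the standard proof of this fact; the paper itself does not supply a proof but simply cites \cite[Prop.~V.4.2]{stenstrom}, so there is no in-paper argument to compare against. The one step worth flagging is the modular law for subobject lattices in an abelian category, which you rightly identify as the key ingredient and which is indeed available (e.g., via the Mitchell embedding or by a direct pullback/pushout verification).
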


The next proposition contains standard characterizations of locally noetherian categories. We leave its proof as an exercise. In fact, a proof can be adapted by referring to the proof of Proposition~\ref{prop-locally coherent characterizations} below; just make necessary changes and refer to facts in~\cite[Section~V.4.]{stenstrom}.

\begin{proposition}\label{prop-locally noetherian characterizations}
The following are equivalent for any Grothendieck category $\cat{G}$.
\begin{enumerate}
\item $\cat{G}$ is locally noetherian.

\item $\cat{G}$ is locally finitely generated and the finitely generated objects coincide with the noetherian objects.

\item $\cat{G}$ is locally finitely generated and the full subcategory of finitely generated objects is a Serre subcategory.

\item Each $C \in \cat{G}$ is a direct union of noetherian objects.

\item $\cat{G}$ is locally finitely generated and the injective objects are closed under direct limits (or just under direct sums).
 \end{enumerate}
\end{proposition}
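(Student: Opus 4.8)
The plan is to run a cycle of implications, using throughout two basic facts about locally finitely generated categories: every object is the directed union of its finitely generated subobjects, and a finitely generated subobject of a directed union already lies inside one of the terms (\cite[Lemma~V.3.3]{stenstrom}); and the fact, Proposition~\ref{prop-noetherian objects serre}, that the noetherian objects always form a Serre subcategory, so in particular are closed under finite direct sums, subobjects, and quotients. I would also record at the outset that under $(1)$ every finitely generated object is even finitely presented: an epimorphism onto it from a finite direct sum of noetherian generators has noetherian, hence finitely generated, kernel.

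For $(1)\Rightarrow(2)$: a locally noetherian category is locally finitely generated since noetherian objects are finitely generated, and conversely any finitely generated object is a quotient of a finite direct sum of noetherian generators, hence noetherian by Proposition~\ref{prop-noetherian objects serre}; so the two classes coincide. Then $(2)\Rightarrow(3)$ is immediate since the class of noetherian objects is Serre. For $(3)\Rightarrow(1)$, pick a generating set $\{G_i\}$ of finitely generated objects; for a subobject $S\subseteq G_i$, applying the Serre hypothesis to $0\to S\to G_i\to G_i/S\to 0$, whose middle term is finitely generated, forces $S$ to be finitely generated, so each $G_i$ is noetherian and $\cat{G}$ is locally noetherian. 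For $(1)\Leftrightarrow(4)$: assuming $(1)$, every finitely generated subobject of any object is noetherian (as in $(1)\Rightarrow(2)$), so local finite generation makes every object a directed union of noetherian subobjects; conversely, if every member of some generating set $\{G_i\}$ is the directed union of its noetherian subobjects $N_{ij}$, then the canonical epimorphism $\bigoplus_j N_{ij}\twoheadrightarrow G_i$ shows that $\{N_{ij}\}$ is again a generating set — and it is genuinely a set because Grothendieck categories are well-powered — so $\cat{G}$ is locally noetherian.

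It remains to tie in $(5)$. For $(1)\Rightarrow(5)$ in its strong form, let $\{E_\lambda\}$ be a directed system of injectives with colimit $E$; by the standard Baer-type criterion $E$ is injective as soon as every morphism $H\to E$ from a subobject $H$ of a (noetherian) generator extends over that generator. Such an $H$ is finitely presented, so $\Hom_{\cat{G}}(H,-)$ commutes with the colimit, the morphism factors through some $E_\lambda$, we extend over the injective $E_\lambda$, and compose back into $E$. Passing from the direct-limit form to the direct-sum form is trivial, a coproduct being the directed colimit of its finite subcoproducts. Finally, $(5)\Rightarrow(1)$ needs only closure under direct sums; this is the categorical Bass–Papp argument. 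If some finitely generated $G$ were not noetherian, fix a strictly increasing chain $S_1\subsetneq S_2\subsetneq\cdots$ with union $S$. For fixed $m$, the composite $S_m\hookrightarrow S\to S/S_n$ vanishes whenever $n\ge m$, so the quotient maps assemble into a morphism $\phi\colon S\to\bigoplus_n E(S/S_n)$ into the injective object $\bigoplus_n E(S/S_n)$ (using exactness of direct limits to see the image of $S$ lies in the coproduct, not just the product). Extend $\phi$ along $S\hookrightarrow G$; since $G$ is finitely generated its image lands in a finite subcoproduct $\bigoplus_{n\le N}E(S/S_n)$, so comparing components forces $S\to S/S_n$ to vanish for $n>N$, i.e. $S=S_n$, a contradiction. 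Hence every finitely generated object, in particular each member of the finitely generated generating set supplied by $(5)$, is noetherian.

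I expect the Bass–Papp step $(5)\Rightarrow(1)$ to be the only genuine obstacle: one must check that $\phi$ really factors through the coproduct (the finite-support verification rests on $S_m\to S/S_n$ being zero for $n\ge m$ together with $\mathrm{Im}(S\to\prod_n S/S_n)=\bigcup_m\mathrm{Im}(S_m\to\prod_n S/S_n)$ by exactness of direct limits) and that ``the image of the finitely generated $G$ sits in a finite subcoproduct'' is exactly \cite[Lemma~V.3.3]{stenstrom}. Everything else is formal bookkeeping with the Serre property and with finitely generated versus finitely presented objects, and mirrors the proof of Proposition~\ref{prop-locally coherent characterizations}; the reader may also consult \cite[Section~V.4.]{stenstrom}.
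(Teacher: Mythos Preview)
Your proof is correct and is exactly what the paper asks for: the paper leaves this proposition as an exercise, suggesting only that one adapt the proof of Proposition~\ref{prop-locally coherent characterizations} and consult \cite[Section~V.4]{stenstrom}, which is precisely what you do (and you say so explicitly). The cycle $(1)\Rightarrow(2)\Rightarrow(3)\Rightarrow(1)$ mirrors the coherent case, the equivalence with $(4)$ is the standard direct-union argument, and your $(5)\Rightarrow(1)$ is the categorical Bass--Papp theorem as in Stenstr\"om~V.4.3; all steps are sound.
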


We show now that locally noetherian categories are particularly nice locally type $FP_{\infty}$ categories.
This theorem characterizes them in terms of the objects of type $FP_{\infty}$ and the absolutely clean objects.

\begin{theorem}\label{them-locally noetherian characterizations}
The following are equivalent for any Grothendieck category $\cat{G}$.
\begin{enumerate}
\item $\cat{G}$ is locally noetherian.

\item $\cat{G}$ is locally type $FP_{\infty}$ and the objects of type $FP_{\infty}$ coincide with the finitely generated objects.

\item $\cat{G}$ is locally type $FP_{\infty}$ and the objects of type $FP_{\infty}$ coincide with the noetherian objects.

\item $\cat{G}$ is locally type $FP_{\infty}$ and the objects of type $FP_{\infty}$ form a Serre subcategory.

\item $\cat{G}$ is locally type $FP_{\infty}$ and absolutely clean objects coincide with the injective objects.

\item $\cat{G}$ is locally finitely generated and each object is a direct union of subobjects of type $FP_{\infty}$.
\end{enumerate}
\end{theorem}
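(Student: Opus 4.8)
The plan is to show that (1) is equivalent to each of (2)--(6). Almost all of these equivalences are formal consequences of Propositions~\ref{prop-locally noetherian characterizations}, \ref{prop-noetherian objects serre}, \ref{prop-thickness of FP-infinity} and~\ref{prop-abs-clean-properties}; the single substantive point is the fact that \emph{in a locally noetherian category every noetherian object is of type $FP_\infty$}, and that is where I expect the real work to lie.

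I would prove that fact as follows. If $\cat{G}$ is locally noetherian then, by Proposition~\ref{prop-locally noetherian characterizations}, the finitely generated objects are precisely the noetherian ones (and, by the classical theory of locally noetherian categories, \cite[Section~V.4]{stenstrom}, these are also precisely the finitely presented ones), while the injective objects are closed under direct limits. Fix a noetherian object $N$ and a direct system $\{X_i\}$. Choose a direct-limit-compatible (e.g.\ functorial) injective coresolution $X_i \to E^\bullet_i$ and pass to the direct limit: the terms of $\varinjlim E^\bullet_i$ are injective (injectives being closed under direct limits) and the complex is exact over $\varinjlim X_i$ (direct limits being exact), so it is an injective coresolution of $\varinjlim X_i$. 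Since $N$ is finitely presented, $\Hom_{\cat{G}}(N,-)$ commutes with direct limits, whence $\Hom_{\cat{G}}(N,\varinjlim E^\bullet_i)\cong\varinjlim\Hom_{\cat{G}}(N,E^\bullet_i)$; taking cohomology and using exactness of $\varinjlim$ once more identifies $\Ext^n_{\cat{G}}(N,\varinjlim X_i)$ with $\varinjlim\Ext^n_{\cat{G}}(N,X_i)$ compatibly with the canonical maps $\xi_n$. Thus $N$ is of type $FP_\infty$. In particular a generating set of noetherian objects is a generating set of objects of type $FP_\infty$, so $\cat{G}$ is locally type $FP_\infty$; and since objects of type $FP_\infty$ are always finitely generated, in the locally noetherian case they are exactly the noetherian objects. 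This proves (1)$\Rightarrow$(3), and records that the class of objects of type $FP_\infty$ equals the class of noetherian objects.

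The remaining implications are then routine. (3)$\Rightarrow$(4) is immediate from Proposition~\ref{prop-noetherian objects serre}. For (4)$\Rightarrow$(2): a Serre subcategory is closed under subobjects and quotients, and a finitely generated $C$ can be realized (using that $C$ is finitely generated and that the objects of type $FP_\infty$ generate) as a quotient of a finite direct sum of generators of type $FP_\infty$, which is again of type $FP_\infty$; the Serre property then forces $C$ to be of type $FP_\infty$, the reverse containment being automatic. For (2)$\Rightarrow$(1): the class of objects of type $FP_\infty$ is thick by Proposition~\ref{prop-thickness of FP-infinity}, and the finitely generated objects are always closed under quotients and extensions, so if the two classes agree then two-out-of-three forces closure under subobjects as well; hence the finitely generated objects form a Serre subcategory and $\cat{G}$ is locally noetherian by Proposition~\ref{prop-locally noetherian characterizations}(3). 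For (1)$\Rightarrow$(6): $\cat{G}$ is locally finitely generated and, by Proposition~\ref{prop-locally noetherian characterizations}(4), each object is a direct union of noetherian subobjects, which are of type $FP_\infty$ by the fact proved above. For (6)$\Rightarrow$(2): a finitely generated object expressed as a direct union of subobjects of type $FP_\infty$ must coincide with one of them, so the finitely generated objects are exactly the objects of type $FP_\infty$, and since $\cat{G}$ is locally finitely generated it is in fact locally type $FP_\infty$; this is (2).

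Finally, for (1)$\Leftrightarrow$(5): the injectives always lie in $\class{AC}$ by Proposition~\ref{prop-abs-clean-properties}(1). Conversely, if $\cat{G}$ is locally noetherian and $A\in\class{AC}$, then for a noetherian generator $G$ and any subobject $U\subseteq G$ the quotient $G/U$ is noetherian, hence of type $FP_\infty$ by (1)$\Rightarrow$(3), so $\Ext^1_{\cat{G}}(G/U,A)=0$; by the generalized Baer criterion $A$ is injective, giving (5). For (5)$\Rightarrow$(1): $\cat{G}$ is locally type $FP_\infty$, hence locally finitely generated, and the injectives, now equal to $\class{AC}$, are closed under direct limits by Proposition~\ref{prop-abs-clean-properties}(4), so $\cat{G}$ is locally noetherian by Proposition~\ref{prop-locally noetherian characterizations}(5). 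The one genuine obstacle is the injective-coresolution argument in (1)$\Rightarrow$(3); the only mild technical point there is the existence of direct-limit-compatible injective coresolutions, which is available in any Grothendieck category.
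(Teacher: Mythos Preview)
Your proof is correct and follows essentially the same strategy as the paper: the substantive step is the injective-coresolution argument showing that noetherian (equivalently, finitely generated) objects are of type $FP_\infty$ in a locally noetherian category, and your version of this is identical to the paper's. The remaining implications are handled with minor variations. The one noteworthy difference is your (2)$\Rightarrow$(1): the paper argues via Baer's criterion (if finitely generated $=$ type $FP_\infty$, then $\Ext^1_{\cat{G}}(G_i/S,-)$ commutes with direct limits for every subobject $S\subseteq G_i$, so direct limits of injectives are injective), whereas you use the thickness of the $FP_\infty$ class to force closure of finitely generated objects under subobjects and then invoke the Serre-subcategory characterization. Both work; your route is slightly more elementary in that it avoids Baer. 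The paper also handles (4)$\Rightarrow$(1) directly (an $FP_\infty$ object has all subobjects finitely generated, hence is noetherian) rather than passing through (2), but again this is only a reorganization.
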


\begin{proof}
Say that $\cat{G}$ is locally noetherian. By Proposition~\ref{prop-locally noetherian characterizations}, the finitely generated objects coincide with the noetherian objects, and these form a Serre subcategory. So (2), (3), and (4) will each follow once we show that the the finitely generated objects are in fact of type $FP_{\infty}$. So let $F$ be finitely generated.
As a first step, we note that $F$ must be finitely presented. Indeed by~\cite[Prop.~V.3.4]{stenstrom} it is enough to show that for any morphism $B \xrightarrow{} F$ with $B$ finitely generated, its kernel is also finitely generated. But since in this case $B$ is noetherian, clearly $\ker{(B \xrightarrow{} F)}$ is finitely generated. Now we continue to show that $F$ is of type $FP_{\infty}$. It is left to show that the functors $\Ext^n_{\cat{G}}(F,-)$ preserve direct limits for all $n \geq 1$. But since $\cat{G}$ is locally noetherian, direct limits of injective objects are injective~\cite[page~124]{stenstrom}. So for any direct system $\{X_i\}_{i \in I}$, if we take injective coresolutions $X_i \hookrightarrow I_{X_i}$, then exactness of direct limits tells us that $\varinjlim X_i \hookrightarrow \varinjlim I_{X_i}$ is an injective coresolution. The canonical map $\xi_n : \varinjlim \Ext^n_{\cat{G}}(F,X_i) \xrightarrow{} \Ext^n_{\cat{G}}(F,\varinjlim X_i)$ can now easily be seen to be an isomorphism. Indeed the computation below holds since $F$ is finitely presented and direct limits commute with homology:
$$\varinjlim \Ext^n_{\cat{G}}(F,X_i) \cong \varinjlim H^n[\Hom(F,I_{X_i})] \cong  H^n[\Hom(F, \varinjlim I_{X_i})]  \cong \Ext^n_{\cat{G}}(F,\varinjlim X_i).$$ This completes the proof that the finitely generated objects coincide with the objects of type $FP_{\infty}$, and so (1) implies (2), (3), and (4).

Next, (3) implies (1) is immediate from definitions. We now show (2) implies (1). So say $\cat{G}$ is locally type $FP_{\infty}$, and let  $\{ G_i\}$ denote a generating set of objects of type $FP_{\infty}$. Then note that for any subobject $S \subseteq G_i$, we have $G_i/S$ is finitely generated and hence also of type $FP_{\infty}$, by hypothesis. So $\Ext^1_{\cat{G}}(G_i/S, -)$ preserves direct limits for all $S \subseteq G_i$. By Baer's criterion for injectivity, see~\cite[Prop.~V.2.9]{stenstrom}, it follows that direct limits of injectives are injective. So by Proposition~\ref{prop-locally noetherian characterizations} we conclude that $\cat{G}$ is locally noetherian.

To finish showing (1), (2), (3), and (4) are all equivalent, we will now show (4) implies (1). We are assuming $\cat{G}$ is locally type $FP_{\infty}$, so it suffices to show that each object of type $FP_{\infty}$ is noetherian. Certainly any type $FP_{\infty}$ object $F$ is finitely generated, and the hypothesis implies that each subobject is also finitely generated. This implies $F$ is noetherian.

We now turn to condition (5). Note that (2) implies (5), again using Bear's criterion for injectivity. Conversely, if (5) is true, then the injectives are closed under direct limits, and so (1) is true from Proposition~\ref{prop-locally noetherian characterizations}.

So it is left to tie in condition (6). Clearly, (2) implies (6). Conversely, suppose each object is a direct union of subobjects of type $FP_{\infty}$. We will show that all finitely generated objects are of type $FP_{\infty}$. So we take a finitely generated $F$, and write it as a direct union $F = \Sigma F_i$ where each $F_i$ is of type $FP_{\infty}$. Since it is a direct union and $F$ is finitely generated we have $F = F_{i_0}$ for some $i_0$.
\end{proof}

\subsection{Locally coherent categories}\label{subsec-locally coherent categories}
In the same way, we now show that all locally coherent categories are particularly nice locally type $FP_{\infty}$ categories. An object $C$ in a Grothendieck category $\cat{G}$ is called \textbf{coherent} if it is finitely presented and each finitely generated subobject is also finitely presented. The category $\cat{G}$ is called \textbf{locally coherent} if it has a generating set consisting of coherent objects. Such a category is clearly locally finitely presented and hence locally finitely generated. Locally noetherian categories are easily seen to be locally coherent.

We now recall some general facts about finitely presented objects and coherent objects in locally finitely generated Grothendieck categories. So assume $\cat{G}$ is locally finitely generated. Then by~\cite[Prop.~V.3.4]{stenstrom}, $F$ is finitely presented, meaning $\Hom_{\cat{G}}(F,-)$ preserves direct limits, if and only if for each short exact sequence  $0 \xrightarrow{} K \xrightarrow{} B \xrightarrow{} F \xrightarrow{} 0$ with $B$ finitely generated, then $K$ is also finitely generated. Using this characterization, it is an exercise to show that $F$ is finitely presented if and only if there exists a short exact sequence  $0 \xrightarrow{} K \xrightarrow{} F' \xrightarrow{} F \xrightarrow{} 0$ with $K$ finitely generated and $F'$ finitely presented. The following proposition also makes a very nice exercise. For a nicely presented proof we refer to~\cite[Proposition~1.5]{herzog-ziegler spectrum}. It is credited there as going back to~\cite[pp.~199]{auslander-coherent functors}.

\begin{proposition}\label{prop-coherent objects wide}
Let $\cat{G}$ be any locally finitely presented Grothendieck category. Then the class of coherent objects forms a wide subcategory.
\end{proposition}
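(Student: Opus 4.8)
The plan is to verify directly the two defining conditions of a wide subcategory from Definition~\ref{def-thick abelian-serre} for the class $\class{C}$ of coherent objects of $\cat{G}$: that it is closed under $\cat{G}$-kernels and $\cat{G}$-cokernels of morphisms between coherent objects, and that it is closed under extensions. (Repleteness and $0 \in \class{C}$ are clear.) Throughout I would use the two characterizations of finitely presented objects recalled just before the statement, both consequences of \cite[Prop.~V.3.4]{stenstrom}, together with \cite[Lemma~V.3.3]{stenstrom} on lifting finitely generated subobjects along epimorphisms — the same lemma already used in the proof of Lemma~\ref{lemma-stoviceks lemma}.

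First I would record three elementary closure facts. (a) A finitely generated subobject $D$ of a coherent object $C$ is itself coherent: it is finitely presented by coherence of $C$, and any finitely generated subobject of $D$ is a finitely generated subobject of $C$, hence finitely presented. (b) If $D$ is finitely presented and $E \subseteq D$ is finitely generated, then $D/E$ is finitely presented — immediate from the second characterization applied to the presentation $0 \to E \to D \to D/E \to 0$. (c) The finitely presented objects are closed under extensions: for $0 \to A \to B \to C \to 0$ with $A,C$ finitely presented and any $0 \to K \to P \to B \to 0$ with $P$ finitely generated, let $P' = \ker(P \twoheadrightarrow B \twoheadrightarrow C)$; then $P'$ is finitely generated since $C$ is finitely presented, and $K \subseteq P'$ with $P'/K \cong A$, so $K$ is finitely generated since $A$ is finitely presented, whence $B$ is finitely presented by \cite[Prop.~V.3.4]{stenstrom}.

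For kernels and cokernels, let $f \colon C \to D$ with $C,D$ coherent. Then $\im f$ is finitely generated as a quotient of $C$, hence coherent by (a). From $0 \to \ker f \to C \to \im f \to 0$, \cite[Prop.~V.3.4]{stenstrom} gives that $\ker f$ is finitely generated, so $\ker f$ is coherent by (a) again. For $\cok f = D/\im f$ it suffices to prove that $D/E$ is coherent for any finitely generated $E \subseteq D$ with $D$ coherent: such $D/E$ is finitely presented by (b), and given a finitely generated $H \subseteq D/E$ one lifts it via \cite[Lemma~V.3.3]{stenstrom} to a finitely generated subobject of $D$, then enlarges it by $E$ to obtain $E \subseteq \tilde H \subseteq D$ with $\tilde H$ finitely generated and $\tilde H/E \cong H$; since $\tilde H$ is finitely presented by coherence of $D$, $H$ is finitely presented by (b). Hence $\cok f \in \class{C}$.

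For extensions, take $0 \to A \to B \to C \to 0$ with $A,C$ coherent. Then $B$ is finitely presented by (c). Given a finitely generated $H \subseteq B$, its image $\bar H$ in $C$ is finitely generated, hence finitely presented by coherence of $C$; from $0 \to H \cap A \to H \to \bar H \to 0$ and \cite[Prop.~V.3.4]{stenstrom}, $H \cap A$ is finitely generated, hence finitely presented by (a); and $H$ is then finitely presented by (c). So $B \in \class{C}$, completing the verification that $\class{C}$ is wide. The only step needing genuine care is the cokernel case, where controlling the finitely generated subobjects of the quotient $D/E$ forces the use of the lifting lemma \cite[Lemma~V.3.3]{stenstrom}; everything else is a routine diagram chase with \cite[Prop.~V.3.4]{stenstrom}.
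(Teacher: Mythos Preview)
Your proof is correct. The paper does not give its own argument for this proposition; it simply says the result ``makes a very nice exercise'' and refers to \cite[Proposition~1.5]{herzog-ziegler spectrum} (crediting Auslander) for a detailed proof. Your direct verification via the three auxiliary facts (a)--(c) and the systematic use of \cite[Prop.~V.3.4]{stenstrom} and \cite[Lemma~V.3.3]{stenstrom} is exactly the standard approach found in those references, so there is nothing to contrast.
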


\begin{corollary}\label{cor-thick-wide-serre-subcats}
Let $\cat{G}$ be any locally finitely presented Grothendieck category. Then the following hold.
\begin{enumerate}
\item The class of all objects of type $FP_{\infty}$ forms a thick subcategory.
\item The class of all coherent objects forms a wide, so in particular, an abelian, subcategory.
\item The class of all noetherian objects forms a Serre subcategory.
\end{enumerate}
\end{corollary}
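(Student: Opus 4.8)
The plan is to recognize this corollary as a direct repackaging of three results already established, since each hypothesis it needs has been arranged in the preceding subsections. For part (1), I would simply cite Proposition~\ref{prop-thickness of FP-infinity}, which shows that the class of objects of type $FP_{\infty}$ is a thick subcategory of \emph{any} Grothendieck category; a locally finitely presented Grothendieck category is in particular a Grothendieck category, so no further argument is needed. For part (3), the same remark applies verbatim to Proposition~\ref{prop-noetherian objects serre}, which gives that the noetherian objects form a Serre subcategory in any Grothendieck category.

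Part (2) is the one place where the locally finitely presented hypothesis is genuinely used, and here I would invoke Proposition~\ref{prop-coherent objects wide}, whose hypothesis (``$\cat{G}$ locally finitely presented'') matches that of the corollary exactly; it yields that the coherent objects form a wide subcategory. That a wide subcategory is in particular an abelian subcategory is immediate from Definition~\ref{def-thick abelian-serre}(2), where ``abelian subcategory'' is built into the very notion of wide. If one also wants thickness for the coherent or noetherian classes, it comes for free from Lemma~\ref{lemma-thick-wide-serre-subcats} (every Serre subcategory is wide, every wide subcategory is thick).

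I expect no real obstacle: the mathematical content lives entirely in Propositions~\ref{prop-thickness of FP-infinity}, \ref{prop-coherent objects wide}, and~\ref{prop-noetherian objects serre}, and the only thing to observe beyond citing them is the trivial point that all three classes are nonempty and replete (each contains $0$ and is closed under isomorphism), which is part of the respective definitions. The single item worth a moment's attention is simply confirming that ``locally finitely presented'' is precisely the standing assumption under which Proposition~\ref{prop-coherent objects wide} was proved, so that part (2) applies without strengthening the hypotheses.
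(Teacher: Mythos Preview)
Your proposal is correct and matches the paper's approach: the corollary is stated without proof, being an immediate consequence of Propositions~\ref{prop-thickness of FP-infinity}, \ref{prop-coherent objects wide}, and~\ref{prop-noetherian objects serre} exactly as you indicate. Your observation that only part~(2) actually uses the locally finitely presented hypothesis is also correct.
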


The next proposition contains standard characterizations of locally coherent categories. It can all be found  scattered about the literature. In particular, see~\cite[Section~2]{roos-locally noetherian}, \cite[Theorem~1.6]{herzog-ziegler spectrum}, and \cite[App.~B]{stovicek-purity}.

\begin{proposition}\label{prop-locally coherent characterizations}
The following are equivalent for any Grothendieck category $\cat{G}$.
\begin{enumerate}
\item $\cat{G}$ is locally coherent.

\item $\cat{G}$ is locally finitely presented and the finitely presented objects coincide with the coherent objects.

\item $\cat{G}$ is locally finitely presented and the full subcategory of finitely presented objects is a wide subcategory.

\item Each $C \in \cat{G}$ is a direct limit of coherent objects.

\item $\cat{G}$ is locally finitely presented and the absolutely pure objects, that is, FP-injective objects, are closed under direct limits (or just under direct sums).
 \end{enumerate}
\end{proposition}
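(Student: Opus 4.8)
The plan is to prove the cycle $(1)\Rightarrow(2)\Rightarrow(3)\Rightarrow(1)$, then to handle $(2)\Leftrightarrow(4)$ and $(2)\Leftrightarrow(5)$ separately. For the first three conditions the engine will be Proposition~\ref{prop-coherent objects wide} (the coherent objects form a wide subcategory) together with the characterizations of finitely presented objects in a locally finitely generated category recalled just before it; for $(5)$ the main input will be Lemma~\ref{lemma-stoviceks lemma}.

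For $(1)\Rightarrow(2)$: since a coherent object is by definition finitely presented, I only need to show every finitely presented $F$ is coherent. I would pick an epimorphism $P\twoheadrightarrow F$ with $P$ a finite direct sum of coherent generators; a wide subcategory is closed under finite direct sums (being closed under extensions, Definition~\ref{def-thick abelian-serre}(2)), so $P$ is coherent, and $K:=\ker(P\twoheadrightarrow F)$ is finitely generated because $F$ is finitely presented. A finitely generated subobject of a coherent object is again coherent, so $K$ is coherent, and then $F=\cok(K\hookrightarrow P)$ is coherent by Proposition~\ref{prop-coherent objects wide}. Then $(2)\Rightarrow(3)$ is immediate from Proposition~\ref{prop-coherent objects wide}. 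For $(3)\Rightarrow(1)$: given a finitely presented generator $G$ and a finitely generated subobject $S\subseteq G$, choose an epimorphism $P\twoheadrightarrow S$ with $P$ a finite direct sum of generators; then $S$ is the image of the morphism $P\to G$ between finitely presented objects, hence finitely presented because the finitely presented objects form an abelian subcategory (the image being the kernel of the cokernel). So $G$ is coherent and $\cat{G}$ is locally coherent.

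For $(2)\Leftrightarrow(4)$: if $(2)$ holds then $\cat{G}$ is locally finitely presented, so every object is a direct limit of finitely presented — hence coherent — objects, giving $(4)$. Conversely, if every object is a direct limit of coherent objects then in particular a direct limit of finitely presented objects, so $\cat{G}$ is locally finitely presented (\cite[Theorem~1.11]{adamek-rosicky}); and if $F$ is finitely presented and $F=\varinjlim C_j$ with the $C_j$ coherent, then $\mathrm{id}_F$ factors through some $C_{j_0}$, exhibiting $F$ as a retract of a coherent object, hence coherent, since a wide subcategory is in particular thick (Lemma~\ref{lemma-thick-wide-serre-subcats}) and so closed under retracts. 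This gives $(2)$.

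Finally, for $(2)\Rightarrow(5)$: under $(2)$ the category $\cat{G}$ is locally coherent, and a set $\class{S}$ of isomorphism representatives of the coherent objects satisfies the hypotheses of Lemma~\ref{lemma-stoviceks lemma} — it is a generating set of finitely generated objects closed under finite direct sums and under kernels of epimorphisms (the last point by Proposition~\ref{prop-coherent objects wide}) — while $\rightperp{\class{S}}$ is exactly the class of FP-injective objects, since ``coherent'' and ``finitely presented'' coincide. By Lemma~\ref{lemma-stoviceks lemma}(4), an object $C$ lies in $\rightperp{\class{S}}$ if and only if it is injective with respect to the set $I$ of all monomorphisms $F'\hookrightarrow F''$ with $F'$, $F''$, $F''/F'$ coherent; equivalently, $\Hom_{\cat{G}}(F'',C)\to\Hom_{\cat{G}}(F',C)$ is surjective for every such pair. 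Since $F'$ and $F''$ are finitely presented, $\Hom_{\cat{G}}(F',-)$ and $\Hom_{\cat{G}}(F'',-)$ commute with direct limits, and because direct limits of abelian groups are exact, the surjectivity above passes to direct limits of $C$'s; hence the FP-injectives are closed under direct limits, in particular under direct sums. The converse $(5)\Rightarrow(2)$ is the step I expect to be the real obstacle: one has to deduce from closure of the FP-injectives under direct sums that the natural map $\varinjlim\Ext^1_{\cat{G}}(F,X_i)\to\Ext^1_{\cat{G}}(F,\varinjlim X_i)$ is an isomorphism for every finitely presented $F$ (this being precisely what can fail for a non-coherent $\cat{G}$), after which the five lemma applied to $0\to S\to F\to F/S\to 0$, with $F$ finitely presented and $S\subseteq F$ finitely generated, forces $S$ to be finitely presented, i.e.\ finitely presented $=$ coherent. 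For this passage from ``closed under direct sums'' to ``closed under direct limits'' I would rely on the arguments in Roos~\cite{roos-locally noetherian}, Herzog~\cite[Theorem~1.6]{herzog-ziegler spectrum}, and Stovicek~\cite[Appendix~B]{stovicek-purity}.
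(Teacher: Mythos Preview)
Your proof is correct and follows essentially the same architecture as the paper's: cycle through (1), (2), (3) using Proposition~\ref{prop-coherent objects wide}, then tie in (4) and (5). A few minor differences are worth noting. For $(3)\Rightarrow(1)$ you verify only that the \emph{generators} are coherent via the image argument, while the paper proves the stronger $(3)\Rightarrow(2)$ by the same mechanism; both are fine. For $(4)$, the paper shows $(4)\Rightarrow(1)$ by observing that $\bigoplus C_i\twoheadrightarrow\varinjlim C_i$ exhibits a generating set of coherent objects, whereas you show $(4)\Rightarrow(2)$ by the retract trick (factoring $1_F$ through some $C_{j_0}$); again both are standard. The one substantive difference is for $(2)\Rightarrow(5)$: the paper simply cites \cite[Prop.~B.3]{stovicek-purity}, while you give a self-contained argument via Lemma~\ref{lemma-stoviceks lemma}(4), reducing FP-injectivity to a lifting condition against monomorphisms between finitely presented objects and then passing to direct limits --- this is a genuine improvement in exposition. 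For the hard direction $(5)\Rightarrow(2)$ both you and the paper defer to the literature; your outline (commutation of $\Ext^1_{\cat{G}}(F,-)$ with direct limits, then the five lemma on $0\to S\to F\to F/S\to 0$) is correct in spirit, though as you note the real content lies in the cited references.
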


\begin{proof}
(1) implies (2). Let $\cat{G}$ be locally coherent and $F$ be finitely presented object. Then we can find an epimorphism $C \xrightarrow{g} F$ where $C$ is a finite direct sum of coherent objects. Note then that $C$ itself is coherent by Proposition~\ref{prop-coherent objects wide}. Since $F$ is finitely presented, we see that $\ker{g}$ must be finitely generated. A finitely generated subobject of a coherent object is clearly coherent, so $\ker{g}$ is coherent. It follows again from Proposition~\ref{prop-coherent objects wide} that $F$ is coherent. This proves (1) implies (2), and (2) implies (1) is clear from definitions.

(2) implies (3) is immediate from Proposition~\ref{prop-coherent objects wide}. We now prove (3) implies (2). Suppose the subcategory of finitely presented objects is wide. Then in particular it is abelian. Let $F$ be finitely presented and let $S \subseteq F$ be finitely generated. We wish to show $S$ is also finitely presented. But consider the short exact sequence $0 \xrightarrow{} S \xrightarrow{}  F \xrightarrow{} F/S \xrightarrow{} 0$. It shows $F/S$ must be finitely presented, so by hypothesis, $S$ must be too. This proves (3), and at this point we have shown (1) -- (3) are all equivalent.

(2) implies (4) follows from a standard fact about locally finitely presented categories. See~\cite[Theorem~1.11]{adamek-rosicky}. We now show (4) implies (1). Given $C \in \cat{G}$, write $C = \varinjlim C_i$ where each $C_i$ is coherent. Observe that we have an epimorphism $\oplus C_i \xrightarrow{} \varinjlim C_i$. Since the class of coherent objects has a small skeleton, this epimorphism shows that the ``set'' of all coherent objects generates $\cat{G}$.

This shows (1) -- (4) are all equivalent and condition (5) is also known to be equivalent. For example, see~\cite[Prop.~B.3]{stovicek-purity}.
\end{proof}

We now give the characterization of locally coherent categories in terms of objects of type $FP_{\infty}$ and absolutely clean objects.

\begin{theorem}\label{them-locally coherent characterizations}
The following are equivalent for any Grothendieck category $\cat{G}$.
\begin{enumerate}
\item $\cat{G}$ is locally coherent.

\item $\cat{G}$ is locally type $FP_{\infty}$ and the objects of type $FP_{\infty}$ coincide with the finitely presented objects.

\item $\cat{G}$ is locally type $FP_{\infty}$ and the objects of type $FP_{\infty}$ coincide with the coherent objects.

\item $\cat{G}$ is locally type $FP_{\infty}$ and the objects of type $FP_{\infty}$ form a wide subcategory.

\item $\cat{G}$ is locally type $FP_{\infty}$ and absolutely clean objects coincide with the absolutely pure objects.

\item $\cat{G}$ is locally finitely presented and each object is a direct limit of objects of type $FP_{\infty}$.
\end{enumerate}
\end{theorem}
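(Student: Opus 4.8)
The plan is to follow the template of the proof of Theorem~\ref{them-locally noetherian characterizations}: establish (1) $\Rightarrow$ (2),(3),(4),(5),(6), and then each of (2)--(6) $\Rightarrow$ (1). Essentially all of the work sits in the first implication, and within it in one step, which I expect to be the main obstacle: showing that in a locally coherent $\cat{G}$ every coherent object $C$ is of type $FP_{\infty}$. Here is how I would do this. Let $\class{S}$ be a set of isomorphism representatives of the coherent objects (these are finitely presented, hence essentially small, so such a set exists). By Proposition~\ref{prop-coherent objects wide} the coherent objects form a wide subcategory, so $\class{S}$ is a generating set closed under finite direct sums and under kernels, and Lemma~\ref{lemma-stoviceks lemma} applies. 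Since in a locally coherent category the coherent objects are exactly the finitely presented ones, $\rightperp{\class{S}}$ is precisely the class $\class{AP}$ of absolutely pure (FP-injective) objects; by Lemma~\ref{lemma-stoviceks lemma}(4) the pair $(\leftperp{\class{AP}},\class{AP})$ is a small, hence functorially complete, hereditary cotorsion pair, so in particular $\Ext^n_{\cat{G}}(C,A)=0$ for every coherent $C$, every $A\in\class{AP}$, and all $n\geq 1$. Now given a direct system $\{X_i\}_{i\in I}$, use functorial completeness to build, functorially in $i$, right resolutions $0\to X_i\to A^0_i\to A^1_i\to\cdots$ with every $A^j_i\in\class{AP}$. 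Taking the (exact) direct limit gives a resolution $0\to\varinjlim X_i\to\varinjlim A^0_i\to\varinjlim A^1_i\to\cdots$ whose terms lie again in $\class{AP}$, because FP-injective objects are closed under direct limits in a locally coherent category (Proposition~\ref{prop-locally coherent characterizations}(5)). Since both resolutions consist of objects acyclic for $\Hom_{\cat{G}}(C,-)$, and $\Hom_{\cat{G}}(C,-)$ commutes with direct limits as $C$ is finitely presented, the chain
$$\varinjlim \Ext^n_{\cat{G}}(C,X_i)\cong \varinjlim H^n\bigl(\Hom_{\cat{G}}(C,A^\bullet_i)\bigr)\cong H^n\bigl(\Hom_{\cat{G}}(C,\varinjlim A^\bullet_i)\bigr)\cong \Ext^n_{\cat{G}}(C,\varinjlim X_i)$$
shows $\Ext^n_{\cat{G}}(C,-)$ preserves direct limits; that is, $C$ is of type $FP_{\infty}$.

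It follows that $\cat{G}$ is locally type $FP_{\infty}$, and since (type $FP_{\infty}$) $\subseteq$ (finitely presented) $=$ (coherent) $\subseteq$ (type $FP_{\infty}$), conditions (2) and (3) hold; (4) then follows from Proposition~\ref{prop-coherent objects wide}. For (5), note $\class{AC}=\rightperp{(FP_{\infty}(\cat{G}))}=\rightperp{(\text{coherent})}=\class{AP}$, so absolutely clean $=$ absolutely pure. Finally (6) is immediate from Proposition~\ref{prop-locally coherent characterizations}(4), since locally coherent categories are locally finitely presented and coherent $=$ type $FP_{\infty}$.

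For the converses I would argue as follows. (3) $\Rightarrow$ (1) is immediate, as a generating set of type $FP_{\infty}$ objects is then a generating set of coherent objects. For (2) $\Rightarrow$ (1): if $\{Y_i\}$ is a direct system of FP-injectives and $F$ is finitely presented, then $F$ is of type $FP_{\infty}$ by hypothesis, so $\Ext^1_{\cat{G}}(F,\varinjlim Y_i)\cong\varinjlim\Ext^1_{\cat{G}}(F,Y_i)=0$; hence FP-injectives are closed under direct limits and Proposition~\ref{prop-locally coherent characterizations}(5) gives local coherence. For (4) $\Rightarrow$ (1): given a type $FP_{\infty}$ object $F$ and a finitely generated $S\subseteq F$, pick an epimorphism $P\twoheadrightarrow S$ with $P$ a finite sum of type $FP_{\infty}$ generators; since the type $FP_{\infty}$ objects form a wide subcategory, $F/S=\cok(P\to F)$ lies in it, and then so does $S=\ker(F\to F/S)$ — so $S$ is finitely presented and $F$ is coherent; thus $\cat{G}$, having a generating set of type $FP_{\infty}$ $=$ coherent objects, is locally coherent. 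For (5) $\Rightarrow$ (1): absolutely clean objects are closed under direct limits (Proposition~\ref{prop-abs-clean-properties}(4)) and by hypothesis equal the FP-injectives, so again Proposition~\ref{prop-locally coherent characterizations}(5) applies. For (6) $\Rightarrow$ (1): writing a finitely presented $F$ as $\varinjlim F_j$ with each $F_j$ of type $FP_{\infty}$, the identity of $F$ factors through some $F_{j_0}$, so $F$ is a retract of $F_{j_0}$; since type $FP_{\infty}$ objects are closed under retracts (Proposition~\ref{prop-thickness of FP-infinity}), $F$ is of type $FP_{\infty}$, hence type $FP_{\infty}$ $=$ finitely presented and we are in case (2). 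The routine verifications — that the iterated special $\class{AP}$-preenvelopes assemble into a genuine functorial resolution, and the bookkeeping in the converses — are the only remaining details; the one genuinely substantial point is the colimit argument in the first paragraph, which crucially uses that FP-injectives are closed under direct limits exactly when $\cat{G}$ is locally coherent.
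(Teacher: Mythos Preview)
Your proof is correct and follows essentially the same approach as the paper: the core step (1)$\Rightarrow$(2) computes $\Ext^n_{\cat{G}}(C,-)$ on a direct system via absolutely pure coresolutions, using that FP-injectives are closed under direct limits in the locally coherent case, and the converses (2),(3),(4),(5),(6)$\Rightarrow$(1) proceed by the same short arguments invoking Proposition~\ref{prop-locally coherent characterizations}(5), Lemma~\ref{lemma-stoviceks lemma}(1), and the factoring-through-the-colimit trick. The only cosmetic differences are that you are more explicit about the functoriality of the coresolutions and that in (4)$\Rightarrow$(1) you reach $S$ via $\cok$ then $\ker$ rather than directly as an image---both of which are fine since wide subcategories are abelian.
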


\begin{proof}
Say that $\cat{G}$ is locally coherent. By Proposition~\ref{prop-locally coherent characterizations}, the finitely presented objects coincide with the coherent objects, and these form a wide subcategory. So (2), (3), and (4) will each follow once we show that the the finitely presented objects are in fact of type $FP_{\infty}$. So let $F$ be finitely presented. We wish to show that the functors $\Ext^n_{\cat{G}}(F,-)$ preserve direct limits for all $n \geq 1$. By Proposition~\ref{prop-locally coherent characterizations}, direct limits of absolutely pure objects are again absolutely pure. So given any direct system $\{X_i\}_{i \in I}$, if we take absolutely pure coresolutions $X_i \hookrightarrow A_{X_i}$, then exactness of direct limits tells us that $\varinjlim X_i \hookrightarrow \varinjlim A_{X_i}$ is again an absolutely pure coresolution. Moreover, taking $\class{S}$ in Lemma~\ref{lemma-stoviceks lemma} to be a set of isomorphism representatives for all finitely presented objects, part~(3) of that Lemma implies that
$\Ext^n_{\cat{G}}(F,A) = 0$ for all $n \geq 1$ and absolutely pure $A$.  In other words, absolutely pure objects are $\Hom_{\cat{G}}(F,-)$-acyclic, and it follows that we can compute $\Ext^n_{\cat{G}}(F,-)$ via absolutely pure coresolutions; see, for example, \cite[Theorem~XX.6.2]{lang}. So now we compute:
$$\varinjlim \Ext^n_{\cat{G}}(F,X_i) \cong \varinjlim H^n[\Hom(F,A_{X_i})] \cong  H^n[\Hom(F, \varinjlim A_{X_i})]  \cong \Ext^n_{\cat{G}}(F,\varinjlim X_i).$$
This means that the canonical map $\xi_n : \varinjlim \Ext^n_{\cat{G}}(F,X_i) \xrightarrow{} \Ext^n_{\cat{G}}(F,\varinjlim X_i)$ is an isomorphism and completes the proof that $F$ is type $FP_{\infty}$.

Next, (3) implies (1) is immediate from definitions. We now show (2) implies (1). So say $\cat{G}$ is locally type $FP_{\infty}$. By Proposition~\ref{prop-locally coherent characterizations}, it is enough to show that direct limits of absolutely pure objects are again absolutely pure. This would follow if it were true that $\Ext^1_\cat{G}(F, -)$ preserved direct limits for all finitely presented $F$. But this is true, since we are assuming that the finitely presented objects coincide with the $FP_{\infty}$ objects.

To finish showing (1), (2), (3), and (4) are all equivalent, we will now show (4) implies (1). We are assuming $\cat{G}$ is locally type $FP_{\infty}$, so it suffices to show that each object of type $FP_{\infty}$ is coherent. Certainly any type $FP_{\infty}$ object $F$ is finitely presented, so let $S \subseteq F$ be a finitely generated subobject. Part~(1) of Lemma~\ref{lemma-stoviceks lemma} shows that we can find an epimorphism $F' \twoheadrightarrow S$ where $F'$ is again of type $FP_{\infty}$. Now the composition $F' \twoheadrightarrow S \hookrightarrow F$ is a morphism between objects of type $FP_{\infty}$, and its image is $S$. The hypothesis implies that $S$ is also of type $FP_{\infty}$.

We now turn to condition (5). Note that (2) implies (5) by definitions. Conversely, if (5) is true, then the absolutely pure objects are closed under direct limits, and so (1) is true from Proposition~\ref{prop-locally coherent characterizations}.

So it is left to tie in condition (6). It is now clear that (1) implies (6). Conversely, suppose (6) holds. We will prove (2) by showing that any finitely presented object is type $FP_{\infty}$. Indeed let $F$ be finitely presented, and write $F = \varinjlim F_i$ where each $F_i$ is of type $FP_{\infty}$. Then $\Hom_{\cat{G}}(F,F) \cong \Hom_{\cat{G}}(F,\varinjlim F_i) \cong \varinjlim \Hom_{\cat{G}}(F, F_i)$.
This implies that the identiy map $1_F$ factors through some $F_i$. This in turn implies that $F$ is a direct summand of that $F_i$, and so $F$ is of type $FP_{\infty}$ by Proposition~\ref{prop-thickness of FP-infinity}.
\end{proof}

\section{The Inj and Abs clean model structures}

Let $\cat{G}$ be a Grothendieck category. This section has three parts. First, in Section~\ref{subsec-Inj model} we show that there is always a cofibrantly generated model structure on $\cha{G}$ whose homotopy category is equivalent to $K(Inj)$, the homotopy category of all complexes of injective objects. Following~\cite{bravo-gillespie-hovey} we call it the \emph{Inj model structure}.
In Section~\ref{subsec-Abs clean model} we consider the question of compact generation, and approach $K(Inj)$ through another model structure. We see that whenever $\cat{G}$ is locally type $FP_{\infty}$, the absolutely clean cotorsion pair of Proposition~\ref{prop-abs-clean-cotorsion-pair} lifts to a finitely generated model structure on $\cha{G}$ that we call the \emph{Abs clean model structure}.
Specializing to the case that $\cat{G}$ is locally noetherian, the Abs clean model structure coincides exactly with the Inj model structure, showing that $K(Inj)$ is compactly generated. When $\cat{G}$ is locally coherent it coincides with Stovicek's model structure from~\cite[Theorem~6.12]{stovicek-purity}, which he used to show that $K(Inj)$ is even compactly generated in the locally coherent case. In Section~\ref{subsec-AC-injectives}, we see that for a general locally type $FP_{\infty}$ category, the homotopy category of the Abs clean model structure is equivalent to the derived category of absolutely clean objects (with respect to its inherited Quillen exact structure). We denote it $\class{D}(\class{AC})$, and conclude it is a compactly generated triangulated category. Moreover, we show that it is equivalent to a full subcategory of $K(Inj)$ containing the DG-injective complexes.

\subsection{The Inj model structure}\label{subsec-Inj model}

Here we let $\cat{G}$ denote any Grothendieck category.  We recall, again, Baer's criterion for injectivity~\cite[Prop.~V.2.9]{stenstrom}. It says that we can test injectivity of an object using just the inclusions $C \hookrightarrow G_i$ where $G_i$ ranges through a generating set $\{G_i\}$ and $C$ ranges through the subobjects $C \subseteq G_i$. In the language of~\cite[Section~6]{hovey}, Baer's criterion translates to say that the canonical injective cotorsion pair $(\cat{G},\class{I})$ is a small cotorsion pair, with the set of all inclusions $C \hookrightarrow G_i$ serving as a set of \emph{generating monomorphisms}. In Hovey's correspondence between abelian model structures and cotorsion pairs, the small cotorsion pairs correspond to cofibrantly generated model structures. We use Hovey's notation and terminology from~\cite[Sections~2.1/7.4]{hovey-model-categories} regarding other aspects of cofibrantly generated model structures. In particular, given a set of maps $I$, we let $I$-inj denote the set of all maps possessing the \emph{right lifting property} with respect to maps in $I$.

We will encounter several injective model structures on $\cha{G}$ in this paper and the next lemma provides a set $I$ of \emph{generating cofibrations} for any of them. It is the set $J$ of \emph{generating trivial cofibrations} which varies in the different model structures.

\begin{lemma}\label{lemma-generating cofibrations}
Let $\cat{G}$ be any Grothendieck category and let $\tilclass{I}$ denote the class of all injective complexes in $\cha{G}$. That is, each $I \in \tilclass{I}$ is an exact complex with each $Z_nI$ injective. Then the injective cotorsion pair $(\class{A},\tilclass{I})$, where $\class{A}$ denotes the class of all complexes, is small in the sense of~\cite{hovey}. Explicitly, for any given generating set $\{G_i\}$, the set of generating monomorphisms can be taken to be
the set $$I = \{\, 0 \hookrightarrow D^n(G_i) \, \} \cup \{\, S^{n-1}(G_i) \hookrightarrow
D^n(G_i) \,\} \cup \{\, S^n(C) \hookrightarrow S^n(G_i) \,
\},$$ where $C \hookrightarrow G_i$ ranges through all inclusions of subobjects $C \subseteq G_i$.
Moreover, $I$-inj is precisely the class of all split epimorphisms with kernel in $\tilclass{I}$.
\end{lemma}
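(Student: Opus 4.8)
The plan is to first establish the ``Moreover'' clause characterizing $I$-inj, and then read off smallness from it. Two preliminary observations: $I$ is genuinely a \emph{set}, since $\cat{G}$ is well-powered and so for each $G_i$ the subobjects $C \subseteq G_i$ form a set; and $\tilclass{I}$ is precisely the class of injective objects of $\cha{G}$, so any short exact sequence of complexes whose right-hand term lies in $\tilclass{I}$ splits. The heart of the argument will be to show that a map with the right lifting property against $I$ has its kernel in $\tilclass{I}$; the hard part is extracting \emph{exactness} of that kernel from the maps $S^{n-1}(G_i) \hookrightarrow D^n(G_i)$ and \emph{injectivity of its cycle objects} from the maps $S^n(C) \hookrightarrow S^n(G_i)$, in each case exploiting that $\{G_i\}$ is a generating set (and, for the second, Baer's criterion).

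First I would prove that $I$-inj is contained in the class of split epimorphisms with kernel in $\tilclass{I}$. Let $f \colon X \to Y$ lie in $I$-inj. Via the natural isomorphism $\Hom_{\cha{G}}(D^n(G_i),Z) \cong \Hom_{\cat{G}}(G_i,Z_n)$, lifting against $0 \hookrightarrow D^n(G_i)$ says exactly that $\Hom_{\cat{G}}(G_i,X_n) \to \Hom_{\cat{G}}(G_i,Y_n)$ is surjective for all $i$ and $n$, which forces $X_n \to Y_n$ to be epi for each $n$ because $\{G_i\}$ generates; hence $f$ is an epimorphism, with some kernel complex $K$. Next, for any ``cycle of $K$ in degree $n-1$'', that is, any $a \colon G_i \to Z_{n-1}K$, the induced chain map $S^{n-1}(G_i) \to X$ and the zero map $D^n(G_i) \to Y$ form a commutative square over $f$ (the composite into $Y$ vanishes since $a$ lands in $\ker f$), and a diagonal filler for $S^{n-1}(G_i) \hookrightarrow D^n(G_i)$ is a map $c \colon G_i \to K_n$ with $d^K_n c = a$; letting $i$ and $a$ vary and invoking the generating property once more gives $B_{n-1}K = Z_{n-1}K$, i.e.\ $K$ is exact. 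Finally, for $C \subseteq G_i$ and $p \colon C \to Z_nK$, the analogous lifting problem against $S^n(C) \hookrightarrow S^n(G_i)$ produces an extension $G_i \to Z_nK$ of $p$, so by Baer's criterion \cite[Prop.~V.2.9]{stenstrom} each $Z_nK$ is injective. Thus $K \in \tilclass{I}$; being an injective object of $\cha{G}$ it is a direct summand of $X$, so $f$ is split with kernel in $\tilclass{I}$. For the reverse inclusion, write a split epi $f$ as $X = sY \oplus K$ with $K = \ker f \in \tilclass{I}$; in a lifting problem for $f$ against a monomorphism $u \colon A \hookrightarrow B$ in $I$, given by $g \colon A \to X$ and $h \colon B \to Y$ with $fg = hu$, a diagonal of the form $sh + \ell$ with $\ell \colon B \to K$ automatically satisfies $f(sh+\ell) = h$, and the remaining condition becomes $\ell u = g'$, where $g' \colon A \to K$ is the corestriction of $g - sfg$. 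So the problem reduces to extending $g'$ along $u$, and since $K$ is a contractible complex of injectives one has $K_n \cong Z_nK \oplus Z_{n-1}K$ with $Z_nK$ injective and $K_n \twoheadrightarrow Z_{n-1}K$ a split epimorphism; the extension then exists trivially for $0 \hookrightarrow D^n(G_i)$, by surjectivity of $\Hom_{\cat{G}}(G_i,K_n) \to \Hom_{\cat{G}}(G_i,Z_{n-1}K)$ for $S^{n-1}(G_i) \hookrightarrow D^n(G_i)$, and by injectivity of $Z_nK$ along $C \hookrightarrow G_i$ for $S^n(C) \hookrightarrow S^n(G_i)$.

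It then remains to deduce that $(\class{A},\tilclass{I})$ is small with $I$ as its set of generating monomorphisms. Each $u \in I$ is a monomorphism whose cokernel is one of $D^n(G_i)$, $S^n(G_i)$, $S^n(G_i/C)$. Any object of $\tilclass{I}$ is injective in $\cha{G}$, hence satisfies $\Ext^1_{\cha{G}}(\cok u, -) = 0$ for all $u$; conversely, if $N$ satisfies $\Ext^1_{\cha{G}}(\cok u, N) = 0$ for every $u \in I$, then for $u \colon A \hookrightarrow B$ the exact sequence $\Hom_{\cha{G}}(B,N) \to \Hom_{\cha{G}}(A,N) \to \Ext^1_{\cha{G}}(\cok u, N) = 0$ shows that $N \to 0$ has the right lifting property against $u$, so $N \to 0 \in I$-inj, and the first part forces $N \cong \ker(N \to 0) \in \tilclass{I}$. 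Hence the cotorsion pair cogenerated by $I$ is $(\leftperp{\tilclass{I}},\tilclass{I}) = (\class{A},\tilclass{I})$; and since $\cha{G}$ is locally presentable, there is a regular cardinal $\kappa$ for which every domain and codomain of a map in $I$ is $\kappa$-presented, so the smallness conditions of \cite[Def.~6.4]{hovey} are satisfied. The one genuinely substantive point, flagged above, is the passage from ``$f$ has the right lifting property against the families $S^{n-1}(G_i) \hookrightarrow D^n(G_i)$ and $S^n(C) \hookrightarrow S^n(G_i)$'' to ``$\ker f$ is a contractible complex of injectives''.
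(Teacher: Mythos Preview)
Your argument is correct and follows essentially the same route as the paper, just carried out directly rather than by citation: the paper invokes \cite[Proposition~3.8]{gillespie-quasi-coherent} for smallness and \cite[Lemma~3.2]{gillespie-quasi-coherent} for the description of $I$-inj, while you unpack precisely those arguments, identifying the role of each of the three families of maps exactly as the paper sketches. The only cosmetic difference is the order---you derive smallness from the characterization of $I$-inj, whereas the paper states them as two separate consequences of the cited results---and your reverse inclusion could be shortened by simply noting that $K \in \tilclass{I}$ is injective in $\cha{G}$, so any map into $K$ extends along any monomorphism.
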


\begin{proof}
It follows from the above Baer's criterion and~\cite[Proposition~3.8]{gillespie-quasi-coherent} that this set will serve as a set of generating monomorphisms for $(\class{A},\tilclass{I})$. (The proof cited is sloppy and has a misstatement. But it is easily fixed by doing the second paragraph \emph{first}, and using the hypothesis $\Ext^1(S^n(G),X)=0$ to immediately deduce $X$ is exact.) Then it follows from~\cite[Lemma~3.2]{gillespie-quasi-coherent} that $I$-inj is how we describe. In short, the maps in $\{\, 0 \xrightarrow{} D^n(G_i) \, \}$ guarantee that everything in $I$-inj is an epimorphism, and then the maps in $\{\, S^{n-1}(G_i) \xrightarrow{}
D^n(G_i) \,\}$ guarantee that the kernel of such an epimorphism is an exact complex, and finally the maps in
$\{\, S^n(C) \xrightarrow{} S^n(G_i) \, \}$ guarantee that each cycle of this exact kernel is injective.
\end{proof}

\begin{theorem}\label{them-Inj model structure}
For any Grothendieck category $\cat{G}$ there is an abelian model structure on $\cha{G}$ that we call the \textbf{Inj model structure}. This is an injective model structure, meaning all objects are cofibrant and the trivially fibrant objects are the injective complexes. The fibrant objects are precisely the complexes of injectives. The model structure is cofibrantly generated. Explicitly, for any given generating set $\{G_i\}$, the generating cofibrations can be taken to be the set $I$ of Lemma~\ref{lemma-generating cofibrations}, while the generating trivial cofibrations can be taken to be the set $J_1 = \{\, D^n(C) \hookrightarrow D^n(G_i) \,\}$. The homotopy category of this model structure is equivalent to $K(Inj)$, the homotopy category of all complexes of injectives, and it is a well-generated triangulated category.
\end{theorem}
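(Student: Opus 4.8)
The plan is to realize this model structure as a Hovey triple built from two complete hereditary cotorsion pairs on $\cha{G}$, using the criterion of~\cite{gillespie-hovey triples} recalled in Section~\ref{subsec-abelian model cats}.

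First I would record the two cotorsion pairs. The pair $(\class{A},\tilclass{I})$, with $\class{A}$ the class of all complexes and $\tilclass{I}$ the class of injective complexes (which are exactly the injective objects of $\cha{G}$), is small with set of generating monomorphisms the set $I$ of Lemma~\ref{lemma-generating cofibrations}, hence complete; and it is hereditary since $\tilclass{I}$ consists of injective objects. For the second pair, let $\cha{I}$ denote the class of complexes of injectives. Using the natural isomorphisms $\Ext^m_{\cha{G}}(D^n(A),Y)\cong\Ext^m_{\cat{G}}(A,Y_n)$ from Example~\ref{example-chain complexes} together with Baer's criterion, one checks that $\cha{I}=\rightperp{\class{S}_1}$ for the set $\class{S}_1=\{\,D^n(C)\mid n\in\Z,\ C\subseteq G_i\,\}$, where $C$ ranges over subobjects of the generators. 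Hence $(\leftperp{(\cha{I})},\cha{I})$ is a cotorsion pair cogenerated by a set, so complete, and it is hereditary because $\cha{I}$ is closed under cokernels of monomorphisms (if $0\to X\to Y\to Z\to 0$ has $X_n,Y_n$ injective, the degree-$n$ sequence splits, so $Z_n$ is a summand of $Y_n$). A computation in the style of~\cite[Lemma~3.2]{gillespie-quasi-coherent} then shows this pair is small, with generating monomorphisms the set $J_1=\{\,D^n(C)\hookrightarrow D^n(G_i)\,\}$ and $J_1$-inj the epimorphisms whose kernel is a complex of injectives.

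The heart of the matter is the equal-cores condition. We have $\tilclass{I}\subseteq\cha{I}$ trivially, and the core of $(\class{A},\tilclass{I})$ is $\tilclass{I}$, so I must prove $\leftperp{(\cha{I})}\cap\cha{I}=\tilclass{I}$; recall that $\tilclass{I}$ is exactly the class of contractible complexes of injectives. For ``$\supseteq$'': a contractible complex of injectives $X$ is isomorphic to $\bigoplus_n D^{n+1}(B_nX)$, so $\Ext^1_{\cha{G}}(X,Y)\cong\prod_n\Ext^1_{\cat{G}}(B_nX,Y_{n+1})=0$ whenever $Y\in\cha{I}$. For ``$\subseteq$'': given $X\in\cha{I}$ with $\Ext^1_{\cha{G}}(X,-)$ vanishing on $\cha{I}$, the canonical monomorphism $X\hookrightarrow E:=\mathrm{Cone}(\mathrm{id}_X)$ has target a contractible complex of injectives, so $E\in\tilclass{I}$, and cokernel a shift of $X$, again in $\cha{I}$; the extension class lies in $\Ext^1_{\cha{G}}(X[1],X)\cong\Ext^1_{\cha{G}}(X,X[-1])=0$, so the sequence splits and $X$ is a retract of $E\in\tilclass{I}$, whence $X\in\tilclass{I}$. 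This core computation, really its ``$\subseteq$'' half, is the step I expect to be the main obstacle; the rest is assembly.

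With both pairs in hand, the criterion of~\cite{gillespie-hovey triples} produces a unique thick class $\class{W}$ making $(\class{A},\class{W},\cha{I})$ a Hovey triple with $\class{A}\cap\class{W}=\leftperp{(\cha{I})}$ and $\class{W}\cap\cha{I}=\tilclass{I}$; translating, every object is cofibrant, the fibrant objects are the complexes of injectives, and the trivially fibrant objects the injective complexes, so this is an injective model structure. Since both cotorsion pairs are small, the model structure is cofibrantly generated, and Hovey's dictionary~\cite[Section~6]{hovey} identifies the generating cofibrations with $I$ and the generating trivial cofibrations with $J_1$. For the homotopy category, the general theory of abelian model structures identifies $\mathrm{Ho}$ with the category of bifibrant objects, namely $\cha{I}$, modulo the relation $f\sim g\iff g-f$ factors through an object of the core $\class{A}\cap\class{W}\cap\cha{I}=\tilclass{I}$; since a chain map between complexes of injectives factors through a contractible complex of injectives if and only if it is null-homotopic, this quotient is exactly $K(Inj)$. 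Finally, $\cha{G}$ is locally presentable and the model structure is cofibrantly generated, hence combinatorial, so its homotopy category is well-generated by the general theory of combinatorial model categories.
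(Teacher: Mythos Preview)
Your proof is correct, and the overall architecture---building the model structure from the two cotorsion pairs $(\class{A},\tilclass{I})$ and $(\leftperp{\dwclass{I}},\dwclass{I})$, identifying $J_1$ as generating monomorphisms for the second, and then reading off cofibrant generation, the homotopy category, and well-generation---matches the paper's.

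The one genuine difference is in how you assemble the Hovey triple. You invoke the equal-cores criterion of~\cite{gillespie-hovey triples}, verifying that both pairs are hereditary and that $\leftperp{\dwclass{I}}\cap\dwclass{I}=\tilclass{I}$ via a cone-splitting argument, so that a thick class $\class{W}$ is produced automatically. The paper instead proves directly that $\class{W}_1:=\leftperp{\dwclass{I}}$ is thick: it uses the characterization that $X\in\class{W}_1$ if and only if $\homcomplex(X,I)$ is exact for every $I\in\dwclass{I}$, so thickness follows from a two-out-of-three and retract argument on the Hom-complex. Then, since injective complexes are contractible, $\tilclass{I}\subseteq\class{W}_1$, and~\cite[Prop.~3.3]{bravo-gillespie-hovey} gives $\class{W}_1\cap\dwclass{I}=\tilclass{I}$; the Hovey triple $(\class{A},\class{W}_1,\dwclass{I})$ is then assembled directly via~\cite[Lemma~6.7]{hovey}. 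The paper's route is slightly more economical here since it avoids checking hereditariness and yields an explicit description of the trivial objects. Your route, on the other hand, does not require identifying $\class{W}$ in advance; it is exactly the technique the paper itself adopts later for the Abs clean model structure (Theorem~\ref{them-Abs clean model struc}), where the thick class $\class{V}_1$ has no simple direct description.
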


\begin{proof}
As in~\cite{gillespie-degreewise-model-strucs}, let $\dwclass{I}$ denote the class of all complexes which are ``degreewise'' injective. That is, $\dwclass{I}$ is the class of all complexes of injectives.
It follows from Baer's criterion and~\cite[Proposition~4.4]{gillespie-degreewise-model-strucs} that the pair $(\class{W}_1,\dwclass{I})$, where $\class{W}_1=\leftperp{\dwclass{I}}$, is a small cotorsion pair with $J_1$ serving as the generating monomorphisms. We note that $\class{W}_1$ contains the generating set $\{D^n(G_i)\}$, and that $(\class{W}_1,\dwclass{I})$ must be a functorially complete cotorsion pair by~\cite[Theorem~6.5]{hovey}. Once we show $\class{W}_1$ is thick and that $\class{W}_1 \cap \dwclass{I} = \tilclass{I}$, then~\cite[Lemma~6.7]{hovey} guarantees that the two cotorsion pairs $(\class{A},\tilclass{I})$ and $(\class{W}_1,\dwclass{I})$ determine a cofibrantly generated model structure on $\cha{G}$ with $I$ being a set of generating cofibrations and $J_1$ being a set of generating trivial cofibrations. But the class $\class{W}_1$ is thick since in this case a complex $X \in \leftperp{\dwclass{I}}$ if and only if $\homcomplex(X,I)$ is exact for all $I \in \dwclass{I}$. So the retracts and two out of three argument from~\cite[Theorem~4.1]{bravo-gillespie-hovey} holds in the same way, and $\class{W}_1$ contains the contractible complexes. Since injective complexes are contractible we have $\tilclass{I} \subseteq \class{W}_1$ and so from~\cite[Proposition~3.3]{bravo-gillespie-hovey} we conclude $\class{W}_1 \cap \dwclass{I} = \tilclass{I}$. From the fundamental theorem of model categories we know that the homotopy category of this model structure is equivalent to $\dwclass{I}/\sim$ where $\sim$ denotes the formal homotopy relation. However, it follows from~\cite[Corollary~4.8]{gillespie-exact model structures} that $f \sim g$ if and only if $g-f$ factors through an injective. Since injective complexes are contractible this implies that $f \sim g$ if and only if $f$ and $g$ are chain homotopic in the usual sense. So the homotopy category is just $K(Inj)$.

Since we have a cofibrantly generated model structure on a locally presentable (pointed) category, a main result from~\cite{rosicky-brown representability combinatorial model srucs} assures us that it is well generated in the sense of~\cite{neeman-well generated}. Also see~\cite[Section~7.3]{hovey-model-categories}.
\end{proof}

\subsection{The Abs clean model structure}\label{subsec-Abs clean model}

We now let $\cat{G}$ denote any locally type $FP_{\infty}$ category, and consider the question of compact generation. Krause showed in~\cite{krause-stable derived cat of a Noetherian scheme} that $K(Inj)$ is compactly generated whenever $\cat{G}$ is locally noetherian and this has been extended by Stovicek in~\cite{stovicek-purity} to the case of $\cat{G}$ locally coherent. This is all related to the fact that the \emph{Abs clean model structure} that we construct in this section, is always a finitely generated model structure.

Referring to Proposition~\ref{prop-abs-clean-cotorsion-pair}, we have the hereditary cotorsion pair $(\leftperp{\class{AC}},\class{AC})$, where $\class{AC}$ is the class of absolutely clean objects. Recall that it is small and that the set of generating monomorphisms can be taken to be the set of all monomorphisms $F' \hookrightarrow F''$ that fit into a short exact sequence $0 \xrightarrow{} F' \xrightarrow{} F'' \xrightarrow{} F \xrightarrow{} 0$ with $F'$, $F''$, and $F$ each in $FP_{\infty}(\cat{G})$. Here $FP_{\infty}(\cat{G})$ is a set of isomorphism representatives for the class of all objects of type $FP_{\infty}$. We use this notation in the following lemma and theorem.

\begin{lemma}\label{lemma-generating cofibrations for Abs clean}
Let $\cat{G}$ be a locally type $FP_{\infty}$ category and let $\tilclass{AC}$ denote the class of all absolutely clean complexes in $\cha{G}$. That is, each $A \in \tilclass{AC}$ is an exact complex with each $Z_nA$ absolutely clean. Then $(\leftperp{\tilclass{AC}},\tilclass{AC})$ is a cotorsion pair, and small in the sense of~\cite{hovey}. Explicitly, given a generating set $\{G_i\} \subseteq FP_{\infty}(\cat{G})$, the generating monomorphisms can be taken to be the set $$I' = \{\, 0 \hookrightarrow D^n(G_i) \, \} \cup \{\, S^{n-1}(G_i) \hookrightarrow
D^n(G_i) \,\} \cup \{\, S^n(F') \hookrightarrow S^n(F'') \,
\},$$
where $F' \hookrightarrow F''$ ranges through all monomorphisms that fit into a short exact sequence  $0 \xrightarrow{} F' \xrightarrow{} F'' \xrightarrow{} F \xrightarrow{} 0$ with $F'$, $F''$, and $F$ each in $FP_{\infty}(\cat{G})$.
Moreover, $I'$-inj is precisely the class of all epimorphisms with kernel in $\tilclass{AC}$.
\end{lemma}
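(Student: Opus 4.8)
The plan is to mimic the proof of Lemma~\ref{lemma-generating cofibrations}, transporting the ``test object'' input from Baer's criterion (inclusions $C \hookrightarrow G_i$) to the generating monomorphisms for the absolutely clean cotorsion pair supplied by Proposition~\ref{prop-abs-clean-cotorsion-pair}. First I would note that, just as in Example~\ref{example-chain complexes}, since $\cat{G}$ is locally type $FP_{\infty}$ so is $\cha{G}$, and the class $\tilclass{AC}$ of absolutely clean complexes is precisely the class of absolutely clean objects of $\cha{G}$ (by the degreewise description of acyclic complexes with absolutely clean cycles, as recalled in the introduction). Hence $(\leftperp{\tilclass{AC}},\tilclass{AC})$ is the absolutely clean cotorsion pair of $\cha{G}$, which by Proposition~\ref{prop-abs-clean-cotorsion-pair} is small, with generating monomorphisms the set of all $\mathcal{F}' \hookrightarrow \mathcal{F}''$ with $\mathcal{F}', \mathcal{F}'', \mathcal{F}''/\mathcal{F}'$ each in $FP_{\infty}(\cha{G})$. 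The first real task is to replace this (large, complex-level) generating set by the more explicit set $I'$ built from a generating set $\{G_i\} \subseteq FP_{\infty}(\cat{G})$.

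To do this I would argue that $I'$-inj equals the class of epimorphisms with kernel in $\tilclass{AC}$, exactly as in Lemma~\ref{lemma-generating cofibrations}, and then invoke~\cite[Lemma~3.2]{gillespie-quasi-coherent} (or the argument pattern of~\cite[Proposition~3.8]{gillespie-quasi-coherent}) in reverse to conclude $I'$ is a generating set of monomorphisms for the cotorsion pair $(\leftperp{\tilclass{AC}},\tilclass{AC})$. Concretely: the maps $0 \hookrightarrow D^n(G_i)$ force any $f \in I'$-inj to be an epimorphism (using that $\{D^n(G_i)\}$ generates $\cha{G}$); the maps $S^{n-1}(G_i) \hookrightarrow D^n(G_i)$ force the kernel $K$ of such an epimorphism to be exact, using the natural isomorphism $\Ext^1_{\cha{G}}(D^n(G_i)/S^{n-1}(G_i), K) \cong \Ext^1_{\cha{G}}(S^n(G_i), K)$ together with the fact that $\Ext^1_{\cha{G}}(S^n(G), -)$ vanishing on all generators of that form detects exactness of a complex; and finally the maps $S^n(F') \hookrightarrow S^n(F'')$, as $F' \hookrightarrow F''$ ranges over the short exact sequences with all three terms in $FP_{\infty}(\cat{G})$, force each cycle $Z_n K$ to lie in $\rightperp{FP_{\infty}(\cat{G})} = \class{AC}$. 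This last step uses the adjunction-type isomorphism $\Hom_{\cha{G}}(S^n(F), K) \cong \Hom_{\cat{G}}(F, Z_n K)$ and a diagram-chase / retract argument identical to the ``homotopy lemma'' step at the end of the proof of Lemma~\ref{lemma-stoviceks lemma}, showing injectivity of $Z_n K$ with respect to the generating monos of the absolutely clean cotorsion pair on $\cat{G}$, which by Proposition~\ref{prop-abs-clean-cotorsion-pair} and smallness is equivalent to $Z_n K \in \class{AC}$.

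Having characterized $I'$-inj, smallness of $(\leftperp{\tilclass{AC}},\tilclass{AC})$ with generating set $I'$ follows: $\leftperp{(I'\text{-inj})} = \leftperp{\tilclass{AC}}$ by a standard orthogonality argument (an object is left orthogonal to all epimorphisms with kernel in $\tilclass{AC}$ iff it is left orthogonal to $\tilclass{AC}$, since one can build such epimorphisms out of arbitrary objects of $\tilclass{AC}$ via, e.g., $D^0$-type constructions), and each map in $I'$ is a monomorphism between $\kappa$-presentable objects for a suitable regular cardinal $\kappa$ (here one uses that the $G_i$ and the $FP_{\infty}$ objects $F', F'', F$ are finitely presented, hence $\aleph_1$-presentable, and complexes built from them likewise). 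The conclusion that $I'$-inj is \emph{precisely} the class of epimorphisms with kernel in $\tilclass{AC}$ is then the final assertion of the lemma.

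The main obstacle I expect is the middle step: verifying cleanly that the maps $S^n(F') \hookrightarrow S^n(F'')$ in $I'$ genuinely suffice to force \emph{all} cycles $Z_n K$ into $\class{AC}$ — i.e. pushing the ``test by generating monos'' principle through the $S^n$ construction and the cycle functor. This requires care with which short exact sequences $0 \to F' \to F'' \to F \to 0$ one needs (one must know, from Proposition~\ref{prop-abs-clean-cotorsion-pair}, that it is enough to test injectivity against exactly the monos with quotient in $FP_{\infty}(\cat{G})$, not all monos between $FP_{\infty}$ objects) and with the lifting-versus-splitting translation. Everything else is a routine transcription of the proof of Lemma~\ref{lemma-generating cofibrations}, replacing ``injective'' by ``absolutely clean'' and ``Baer's criterion'' by ``Proposition~\ref{prop-abs-clean-cotorsion-pair}''.
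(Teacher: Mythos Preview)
Your second and third paragraphs are correct and constitute essentially the same argument as the paper: both apply the machinery of~\cite[Proposition~3.8]{gillespie-quasi-coherent} (the pattern of Lemma~\ref{lemma-generating cofibrations}) starting from the absolutely clean cotorsion pair $(\leftperp{\class{AC}},\class{AC})$ of Proposition~\ref{prop-abs-clean-cotorsion-pair} rather than the injective one, and then read off the description of $I'$-inj exactly as you do.

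One caution about your first paragraph: the identification of $\tilclass{AC}$ with the \emph{categorically} absolutely clean objects of $\cha{G}$ is not established in the paper's generality---the Remark following the lemma notes this is only proved for $R$-modules in~\cite{bravo-gillespie}. So your route of first applying Proposition~\ref{prop-abs-clean-cotorsion-pair} to $\cha{G}$ itself, obtaining a complex-level generating set, and then replacing it by $I'$, rests on an unproved claim. Fortunately your actual argument in the second paragraph does not use this identification at all: you verify $I'$-inj directly and recover the cotorsion pair from that, which is precisely the paper's method. You should drop or reframe the first paragraph as motivation only.
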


\begin{remark}
By definition, a complex $A \in \tilclass{AC}$ is an exact complex with each $Z_nA$ absolutely clean. We will simply call them \textbf{absolutely clean complexes}.  It is explicitly shown in~\cite[Proposition~2.6]{bravo-gillespie}, that at least for modules over a ring $R$, $\tilclass{AC}$ coincides with the class of \emph{categorically} absolutely clean objects in $\ch$.
So note that Lemma~\ref{lemma-generating cofibrations for Abs clean} is the absolutely clean analog of Lemma~\ref{lemma-generating cofibrations}. It will play the same roll in that it will provide generating cofibrations for more than one ``absolutely clean'' model structure on $\cha{G}$.
\end{remark}

\begin{proof}
Just like Lemma~\ref{lemma-generating cofibrations}, it follows from~\cite[Proposition~3.8]{gillespie-quasi-coherent}. We instead just start with the cotorsion pair $(\leftperp{\class{AC}},\class{AC})$ of Proposition~\ref{prop-abs-clean-cotorsion-pair}.
\end{proof}

\begin{theorem}\label{them-Abs clean model struc}
Let $\cat{G}$ be a locally type $FP_{\infty}$ category. Then there is an hereditary abelian model structure on $\cha{G}$ that we call the \textbf{Abs clean model structure}. The class of fibrant objects, denoted $\dwclass{AC}$, is the class of all complexes of absolutely clean objects. The class $\tilclass{AC}$ of absolutely clean complexes is precisely the class of trivially fibrant objects. The model structure is finitely generated and so the homotopy category is compactly generated. Explicitly, given a generating set $\{G_i\} \subseteq FP_{\infty}(\cat{G})$, the generating cofibrations can be taken to be the set $I'$ of Lemma~\ref{lemma-generating cofibrations for Abs clean}, while
the generating trivial cofibrations can be taken to be the set $J'_1 = \{\, D^n(F') \hookrightarrow D^n(F'') \,\}$.
Here, again $F' \hookrightarrow F''$ ranges through all monomorphisms that fit into a short exact sequence  $0 \xrightarrow{} F' \xrightarrow{} F'' \xrightarrow{} F \xrightarrow{} 0$ with $F'$, $F''$, and $F$ each in $FP_{\infty}(\cat{G})$.
\end{theorem}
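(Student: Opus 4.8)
The plan is to build the Abs clean model structure on $\cha{G}$ by producing two compatible cotorsion pairs and invoking the Hovey correspondence, following the pattern already used for the Inj model structure in Theorem~\ref{them-Inj model structure}. First I would set up the ``fibrant'' cotorsion pair. As in~\cite{gillespie-degreewise-model-strucs}, let $\dwclass{AC}$ denote the class of all complexes of absolutely clean objects (degreewise absolutely clean) and put $\class{W}'_1 = \leftperp{\dwclass{AC}}$. Since $(\leftperp{\class{AC}},\class{AC})$ is a small hereditary cotorsion pair in $\cat{G}$ by Proposition~\ref{prop-abs-clean-cotorsion-pair}, I would apply~\cite[Proposition~4.4]{gillespie-degreewise-model-strucs} (exactly as in the Inj case, now with $\{G_i\}\subseteq FP_\infty(\cat{G})$) to conclude that $(\class{W}'_1,\dwclass{AC})$ is a small cotorsion pair whose generating trivial cofibrations are the set $J'_1 = \{\, D^n(F')\hookrightarrow D^n(F'')\,\}$ described in the statement. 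Because this generating set is a set of monomorphisms between complexes $D^n(F')$, $D^n(F'')$ with finitely presented (indeed type $FP_\infty$) components, and the generating cofibrations $I'$ of Lemma~\ref{lemma-generating cofibrations for Abs clean} also have finitely presented domains and codomains, the resulting model structure is finitely generated, whence its homotopy category is compactly generated (see~\cite[Section~7.4]{hovey-model-categories}). The other cotorsion pair is $(\leftperp{\tilclass{AC}},\tilclass{AC})$, which is small by Lemma~\ref{lemma-generating cofibrations for Abs clean}, with generating cofibrations $I'$.

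The heart of the argument is then to check the hypotheses of~\cite[Lemma~6.7]{hovey} (or equivalently the criteria recalled in Section~\ref{subsec-abelian model cats}): that $\class{W}'_1$ is thick and that $\class{W}'_1 \cap \dwclass{AC} = \tilclass{AC}$, so that the two small cotorsion pairs $(\leftperp{\tilclass{AC}},\tilclass{AC})$ and $(\class{W}'_1,\dwclass{AC})$ assemble into a cofibrantly generated Hovey triple with $I'$ the generating cofibrations and $J'_1$ the generating trivial cofibrations. For thickness, I would argue as in~\cite[Theorem~4.1]{bravo-gillespie-hovey}: a complex $X$ lies in $\class{W}'_1 = \leftperp{\dwclass{AC}}$ if and only if $\homcomplex(X,A)$ is exact for every $A \in \dwclass{AC}$ (this uses that $\dwclass{AC}$ is built from a hereditary cotorsion pair and is closed under the relevant shifts/products, together with the standard adjunction computing $\Ext^1_{\cha{G}}(X, -)$ in terms of $\homcomplex$); from this Hom-exactness characterization, closure under retracts, the two-out-of-three property, and containment of all contractible complexes follow formally, exactly as in~\cite[Theorem~4.1]{bravo-gillespie-hovey}. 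In particular all contractible, hence all injective (indeed all absolutely clean) complexes lie in $\class{W}'_1$, so $\tilclass{AC} \subseteq \class{W}'_1$, and since also $\tilclass{AC} \subseteq \dwclass{AC}$ trivially, we get $\tilclass{AC} \subseteq \class{W}'_1 \cap \dwclass{AC}$. The reverse inclusion $\class{W}'_1 \cap \dwclass{AC} \subseteq \tilclass{AC}$ should follow from~\cite[Proposition~3.3]{bravo-gillespie-hovey} applied to the absolutely clean cotorsion pair: a degreewise absolutely clean complex $X$ which is also in $\leftperp{\dwclass{AC}}$ must be exact with absolutely clean cycles, i.e. $X \in \tilclass{AC}$. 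Granting these two facts, Lemma~\ref{lemma-half related}(1) (or directly~\cite[Lemma~6.7]{hovey}) delivers the Hovey triple and identifies the cofibrant, fibrant, and trivially fibrant classes as claimed.

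Hereditariness of the model structure I would deduce from the fact that both associated cotorsion pairs are hereditary: $(\leftperp{\tilclass{AC}},\tilclass{AC})$ is hereditary because $\tilclass{AC}$ is coresolving in $\cha{G}$ (its right-hand members form a coresolving class, inherited from $\class{AC}$ being coresolving in $\cat{G}$ by Proposition~\ref{prop-abs-clean-properties}(1) together with the behavior of cycles), and $(\class{W}'_1,\dwclass{AC})$ is hereditary since $\dwclass{AC}$ is visibly closed under cokernels of monomorphisms (degreewise) and the $\homcomplex$-characterization of $\class{W}'_1$ is stable under suspension. Finally, the identification of the fibrant objects as $\dwclass{AC}$ and of the trivially fibrant objects as $\tilclass{AC}$ is immediate from Hovey's dictionary once the Hovey triple is in place: $\class{R}' = \dwclass{AC}$ and $\class{W}'_1 \cap \class{R}' = \tilclass{AC}$.

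I expect the main obstacle to be the clean bookkeeping around the ``trivial'' class $\class{W}'_1$: verifying the $\homcomplex$-exactness characterization of $\leftperp{\dwclass{AC}}$ in the precise form needed, and then pinning down $\class{W}'_1 \cap \dwclass{AC} = \tilclass{AC}$ via~\cite[Proposition~3.3]{bravo-gillespie-hovey}. The cited references were written for specific hereditary cotorsion pairs (flats, or the module setting), so the real work is confirming that their proofs go through verbatim with $\class{AC}$ in place of the injectives — which is plausible precisely because $\class{AC}$ is coresolving, closed under products, sums, direct limits and transfinite extensions (Proposition~\ref{prop-abs-clean-properties}) and arises from a small hereditary cotorsion pair (Proposition~\ref{prop-abs-clean-cotorsion-pair}), mirroring all the structural inputs those arguments used for the injectives. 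Everything else — that $I'$ and $J'_1$ have finitely presented domains and codomains, hence finite generation, hence compact generation of the homotopy category — is then routine.
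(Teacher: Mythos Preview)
There is a genuine gap. Your argument mirrors the proof of the Inj model structure too closely: there the fibrant objects are complexes of \emph{injectives}, so every extension $0\to I\to Y\to X\to 0$ with $I\in\dwclass{I}$ is degreewise split, and one really does have $X\in\leftperp{\dwclass{I}}$ iff $\homcomplex(X,I)$ is exact for all $I\in\dwclass{I}$. For $\dwclass{AC}$ this fails. Take $M\notin\leftperp{\class{AC}}$ and consider $D^0(M)$: since $D^0(M)$ is contractible, $\homcomplex(D^0(M),A)$ is exact for every complex $A$, yet $\Ext^1_{\cha{G}}(D^0(M),A)\cong\Ext^1_{\cat{G}}(M,A_0)\neq 0$ for suitable $A_0\in\class{AC}$. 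So your characterization of $\class{W}'_1=\leftperp{\dwclass{AC}}$ is wrong, and with it the thickness argument. Relatedly, the inclusion $\tilclass{AC}\subseteq\leftperp{\dwclass{AC}}$ you assert is false: by~\cite[Proposition~3.2]{gillespie-degreewise-model-strucs} any $X\in\leftperp{\dwclass{AC}}$ must have $X_n\in\leftperp{\class{AC}}$, whereas a complex in $\tilclass{AC}$ has components in $\class{AC}$, not in $\leftperp{\class{AC}}\cap\class{AC}$. In short, this is \emph{not} an injective model structure---the cofibrant objects are $\leftperp{\tilclass{AC}}$, not all complexes---so $\leftperp{\dwclass{AC}}$ is only the class of \emph{trivially cofibrant} objects, not the thick class of trivial objects.

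The paper proceeds differently. It shows that both cotorsion pairs $(\leftperp{\dwclass{AC}},\dwclass{AC})$ and $(\leftperp{\tilclass{AC}},\tilclass{AC})$ are complete and hereditary, and then computes their \emph{cores}: using~\cite[Proposition~3.2]{gillespie-degreewise-model-strucs} one checks that $\leftperp{\dwclass{AC}}\cap\dwclass{AC}$ and $\leftperp{\tilclass{AC}}\cap\tilclass{AC}$ both equal the class of contractible complexes with components in $\leftperp{\class{AC}}\cap\class{AC}$. With equal cores and $\tilclass{AC}\subseteq\dwclass{AC}$, the main theorem of~\cite{gillespie-hovey triples} produces a unique thick class $\class{V}_1$ making $(\leftperp{\tilclass{AC}},\class{V}_1,\dwclass{AC})$ a Hovey triple with $\class{V}_1\cap\dwclass{AC}=\tilclass{AC}$ and $\leftperp{\tilclass{AC}}\cap\class{V}_1=\leftperp{\dwclass{AC}}$. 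This $\class{V}_1$ strictly contains $\leftperp{\dwclass{AC}}$ in general. Once the Hovey triple is in hand, your remarks about $I'$, $J'_1$, finite generation, and compact generation are correct.
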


\begin{proof}
As in the proof of the Inj model structure we again use the notation of~\cite{gillespie-degreewise-model-strucs}. This time $\dwclass{AC}$ denotes the class of all complexes which are ``degreewise'' absolutely clean. That is, $\dwclass{AC}$ is the class of all complexes of absolutely clean objects. Combining Proposition~\ref{prop-abs-clean-cotorsion-pair} with~\cite[Proposition~4.4]{gillespie-degreewise-model-strucs}, we immediately get a small cotorsion pair $(\leftperp{\dwclass{AC}},\dwclass{AC})$ with the described set $J'_1$ being exactly the generating monomorphisms. It is easy to see that this cotorsion pair is hereditary since $(\leftperp{\class{AC}},\class{AC})$ is hereditary. By~\cite[Proposition~3.2]{gillespie-degreewise-model-strucs}, we see that $\leftperp{\dwclass{AC}}$ consists precisely of the complexes $X$ such that each $X_n \in \leftperp{\class{AC}}$, and such that any chain map $X \xrightarrow{} A$ with $A \in \dwclass{AC}$, is null homotopic. From this, one can argue that $\leftperp{\dwclass{AC}} \cap \dwclass{AC}$ coincides with the class of contractible complexes with components in $\leftperp{\class{AC}} \cap \class{AC}$.

The other cotorsion pair is $(\leftperp{\tilclass{AC}},\tilclass{AC})$ from Lemma~\ref{lemma-generating cofibrations for Abs clean}.  Again, we can argue that this cotorsion pair is hereditary and that $\leftperp{\tilclass{AC}} \cap \tilclass{AC}$ coincides with the class of contractible complexes with components in $\leftperp{\class{AC}} \cap \class{AC}$. (Note that $\leftperp{\tilclass{AC}} = \dgclass{(\leftperp{\class{AC}})}$ in the notation of~\cite{gillespie}.)

Since $(\leftperp{\dwclass{AC}},\dwclass{AC})$ and $(\leftperp{\tilclass{AC}},\tilclass{AC})$ are each complete hereditary cotorsion pairs satisfying $\leftperp{\dwclass{AC}} \cap \dwclass{AC} = \leftperp{\tilclass{AC}} \cap \tilclass{AC}$, the existence of a unique hereditary abelian model structure as described is now automatic from~\cite{gillespie-hovey triples}. That is, there exists a unique thick class $\class{V}_1$ such that $(\leftperp{\tilclass{AC}}, \class{V}_1, \dwclass{AC})$ is an hereditary Hovey triple.  In particular, $\dwclass{AC}$ is the class of fibrant objects and  $\class{V}_1 \cap \dwclass{AC} = \tilclass{AC}$ is the class of trivially fibrant objects. By~\cite[Lemma~6.7]{hovey}, the model structure is cofibrantly generated with $I'$ serving as a set of generating cofibrations and $J'_1$ serving as a set of generating trivial cofibrations. But since the domains and codomains of maps in $I'$ and $J'_1$ are all finitely presented, we see that the model structure is in fact \emph{finitely generated} in the sense of~\cite[Section~7.4]{hovey}. Now, Hovey showed in~\cite[Corollary~7.4.4]{hovey} that the cokernels from all maps in $I'$ form a set of compact weak generators for the homotopy category. In other words, the homotopy category is compactly generated.
\end{proof}

We note that in the case that $\cat{G}$ is locally noetherian, the Abs clean model structure coincides with the Inj model structure of Theorem~\ref{them-Inj model structure}. This gives the following corollary which recovers a result of Krause from~\cite{krause-stable derived cat of a Noetherian scheme}.

\begin{corollary}\label{cor-compact generation of K(Inj)}
Let $\cat{G}$ be locally noetherian. Then $K(Inj)$, the homotopy category of all injective complexes, is compactly generated.
\end{corollary}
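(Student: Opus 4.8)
The plan is to deduce the corollary by showing that, when $\cat{G}$ is locally noetherian, the Abs clean model structure of Theorem~\ref{them-Abs clean model struc} coincides with the Inj model structure of Theorem~\ref{them-Inj model structure}, and then reading off compact generation from the former and the identification of the homotopy category with $K(Inj)$ from the latter. The starting point is condition (5) of Theorem~\ref{them-locally noetherian characterizations}: since $\cat{G}$ is locally noetherian it is locally type $FP_{\infty}$ and the absolutely clean objects are precisely the injective objects, i.e. $\class{AC} = \class{I}$. Passing to chain complexes this immediately gives $\dwclass{AC} = \dwclass{I}$, the class of all complexes of injectives, and $\tilclass{AC} = \tilclass{I}$, the class of all injective complexes.

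Next I would compare the two defining cotorsion pairs of the Abs clean model structure, namely $(\leftperp{\dwclass{AC}},\dwclass{AC})$ and $(\leftperp{\tilclass{AC}},\tilclass{AC})$, with those of the Inj model structure. After the substitutions above the first becomes $(\leftperp{\dwclass{I}},\dwclass{I})$, which is exactly the cotorsion pair $(\class{W}_1,\dwclass{I})$ used in the proof of Theorem~\ref{them-Inj model structure}, by the very definition $\class{W}_1 = \leftperp{\dwclass{I}}$. For the second, I would recall the standard fact that the injective objects of the chain complex category $\cha{G}$ are precisely the contractible complexes of injective objects, i.e. the members of $\tilclass{I}$; hence $\Ext^1_{\cha{G}}(X,I) = 0$ for every complex $X$ and every $I \in \tilclass{I}$, so $\leftperp{\tilclass{I}} = \class{A}$, the class of all complexes. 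Thus the second cotorsion pair becomes $(\class{A},\tilclass{I})$, the other cotorsion pair of the Inj model structure. Since an abelian model structure is determined by its pair of associated cotorsion pairs (equivalently, by the uniqueness of the thick class in the Hovey-triple correspondence invoked in both theorems), the Abs clean and Inj model structures coincide in the locally noetherian case.

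Finally, Theorem~\ref{them-Abs clean model struc} asserts that the Abs clean model structure is finitely generated, so its homotopy category is compactly generated; and Theorem~\ref{them-Inj model structure} identifies the homotopy category of the (now identical) Inj model structure with $K(Inj)$. Combining these two statements gives that $K(Inj)$ is compactly generated, which is the assertion of the corollary. The only point that needs any care is the identification $\leftperp{\tilclass{I}} = \class{A}$, i.e. that "injective complex" as used here genuinely means "injective object of $\cha{G}$"; once that is in hand the rest is a direct substitution into results already proved. (Alternatively, one could avoid the model-structure identification and argue directly that in this case the fibrant--cofibrant objects of the Abs clean model structure are exactly the complexes of injectives and that its homotopy relation reduces to ordinary chain homotopy, so that its homotopy category is literally $K(Inj)$; but identifying the two model structures is cleaner.)
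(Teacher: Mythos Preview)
Your proposal is correct and follows exactly the approach the paper takes: the paper simply remarks, immediately before the corollary, that in the locally noetherian case the Abs clean model structure coincides with the Inj model structure, and then states the corollary without further argument. You have supplied the details the paper leaves implicit---namely, invoking Theorem~\ref{them-locally noetherian characterizations}(5) to get $\class{AC}=\class{I}$, and checking that this forces the two pairs of defining cotorsion pairs to agree---so your write-up is a faithful expansion of the paper's one-line justification.
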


\begin{remark}
Remarkably, Stovicek has extended the above result to the locally coherent case. We note that in this case, the Abs clean model structure coincides with Stovicek's model structure from~\cite[Theorem~6.12]{stovicek-purity}. We refer the reader to~\cite{stovicek-purity} for full details on the compact generation of $K(Inj)$ in the locally coherent case.
\end{remark}

\subsection{The derived category of absolutely clean objects and the AC-injective model structure}\label{subsec-AC-injectives}
Again, we are considering the general case of an arbitrary locally type $FP_{\infty}$ category. Note that the full additive subcategory $\class{AC}$, of absolutely clean objects, is closed under extensions and direct summands. So it naturally inherits the structure of a weakly idempotent complete exact category (WIC exact category); for example, see~\cite[Lemma~5.1]{gillespie-exact model structures}. The short exact sequences are just the usual short exact sequences, but with all three terms in $\class{AC}$. By the \emph{derived category of absolutely clean objects}, denoted $\class{D}(\class{AC})$, we mean the derived category with respect to this exact structure, in the sense of~\cite{neeman-exact category} and~\cite{keller-derived cats}. Naively, it is the triangulated category obtained from $\cha{AC}$, the category of chain complexes of absolutely clean objects, by killing the exact complexes. Note that an exact complex with respect to the exact structure on $\class{AC}$ is precisely a complex in $\tilclass{AC}$. Theorem~\ref{them-Abs clean model struc} immediately implies the following.

\begin{corollary}\label{cor-compact generation of abs clean derived cat}
Let $\cat{G}$ be any locally type $FP_{\infty}$ category. Then $\class{D}(\class{AC})$, the derived category of absolutely clean objects, is a compactly generated triangulated category.
\end{corollary}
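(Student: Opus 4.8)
The plan is to deduce the statement directly from Theorem~\ref{them-Abs clean model struc} by identifying $\class{D}(\class{AC})$ with the homotopy category of the Abs clean model structure on $\cha{G}$. Recall that by definition $\class{D}(\class{AC}) = K(\class{AC})/\tilclass{AC}$, the Verdier quotient of the chain homotopy category of complexes of absolutely clean objects by its thick subcategory of acyclic complexes; here ``acyclic'' means acyclic with respect to the exact structure inherited by $\class{AC}$, and by definition of that exact structure such complexes are exactly the members of $\tilclass{AC}$. On the other hand, Theorem~\ref{them-Abs clean model struc} tells us that the Abs clean model structure is finitely generated, so by~\cite[Corollary~7.4.4]{hovey} its homotopy category is compactly generated, with the cokernels of the maps in $I'$ serving as a set of compact generators. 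Hence the corollary will follow as soon as we produce a triangle equivalence between this homotopy category and $\class{D}(\class{AC})$, since compact generation is preserved by triangle equivalences.

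For that equivalence, the starting point is that the fibrant objects of the Abs clean model structure form exactly $\dwclass{AC}$, the class $\cha{AC}$ of complexes of absolutely clean objects, and the trivially fibrant objects form exactly $\tilclass{AC}$. Since the model structure admits a fibrant replacement functor, the fundamental theorem of model categories identifies its homotopy category with the localization of $\cha{AC}$ at the class of those weak equivalences that lie in $\cha{AC}$. So the crux is to show that a chain map $f \colon A \to B$ with $A,B \in \cha{AC}$ is a weak equivalence if and only if its mapping cone lies in $\tilclass{AC}$. Factoring $f$ through its mapping cylinder writes $f$ as a chain homotopy equivalence composed with a monomorphism whose cokernel is the mapping cone of $f$; and in any abelian model structure a monomorphism is a weak equivalence precisely when its cokernel is a trivial object. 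Because $\class{AC}$ is closed under finite direct sums (Proposition~\ref{prop-abs-clean-properties}(4)), the mapping cone again lies in $\dwclass{AC}$, so it is a trivial object exactly when it is trivially fibrant, that is, exactly when it lies in $\tilclass{AC}$.

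This identifies the weak equivalences within $\cha{AC}$ with the maps whose mapping cone is in $\tilclass{AC}$, which is precisely the class of quasi-isomorphisms relative to the exact structure on $\class{AC}$. Every contractible complex of absolutely clean objects is exact with cycles that are direct summands of its components, hence absolutely clean, so such complexes lie in $\tilclass{AC}$; therefore the localization of $\cha{AC}$ at these weak equivalences factors through $K(\class{AC})$ and coincides with $K(\class{AC})/\tilclass{AC} = \class{D}(\class{AC})$. Composing the two equivalences shows $\class{D}(\class{AC})$ is triangle equivalent to the homotopy category of the Abs clean model structure, which we have seen is compactly generated. I expect the only genuinely delicate point to be this identification of the homotopy category: since $\class{AC}$ need not be closed under kernels of epimorphisms, an exact complex with components in $\class{AC}$ need not have its cycles in $\class{AC}$, so one must keep careful track of the fact that the acyclic complexes defining $\class{D}(\class{AC})$ really are the members of $\tilclass{AC}$ — which is also exactly the class $\class{V}_1 \cap \dwclass{AC}$ of trivially fibrant objects provided by Theorem~\ref{them-Abs clean model struc}.
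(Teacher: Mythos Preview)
Your proof is correct, and the underlying idea is the same as the paper's: identify $\class{D}(\class{AC})$ with the homotopy category of the Abs clean model structure by restricting attention to the fibrant objects $\dwclass{AC}=\cha{AC}$ and observing that the trivial objects there are exactly $\tilclass{AC}$.

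The execution differs. The paper invokes two packaged results from~\cite{gillespie-exact model structures}: Proposition~5.2 there says the Hovey triple $(\leftperp{\tilclass{AC}},\class{V}_1,\dwclass{AC})$ restricts to an exact model structure $(\leftperp{\tilclass{AC}}\cap\dwclass{AC},\,\class{V}_1\cap\dwclass{AC},\,\dwclass{AC})$ on $\cha{AC}$, whose trivial class is $\class{V}_1\cap\dwclass{AC}=\tilclass{AC}$ and hence whose homotopy category is $\class{D}(\class{AC})$; Corollary~5.4 there says this restricted homotopy category is equivalent to the original one. You instead argue directly, using the mapping cylinder factorization to characterize the weak equivalences inside $\cha{AC}$ as the maps with cone in $\tilclass{AC}$, and then pass to the Verdier quotient $K(\class{AC})/\tilclass{AC}$. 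Your route is more self-contained (it does not require the reader to consult the exact-model-structure machinery), while the paper's is shorter because that machinery has already been set up elsewhere. One small point you leave implicit and might state: the chain homotopy equivalence $\text{Cyl}(f)\to B$ is a weak equivalence because its kernel $\text{cone}(1_A)$ is contractible with components in $\class{AC}$, hence lies in $\tilclass{AC}\subseteq\class{V}_1$; you essentially note this fact later when passing from $\cha{AC}$ to $K(\class{AC})$, but it is also what makes the two-out-of-three step go through.
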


\begin{proof}
It was shown in~\cite{gillespie-exact model structures} that a WIC exact category has enough structure to consider ``abelian'' model structures, which in this generality are called ``exact'' model structures.
From~\cite[Proposition~5.2]{gillespie-exact model structures}, the category $\cha{AC}$ inherits an exact model structure from the Hovey triple $(\leftperp{\tilclass{AC}}, \class{V}_1, \dwclass{AC})$ of Theorem~\ref{them-Abs clean model struc}. Precisely, it is the Hovey triple  $(\leftperp{\tilclass{AC}} \cap \dwclass{AC}, \class{V}_1 \cap \dwclass{AC}, \dwclass{AC})$, and these are cotorsion pairs in $\cha{AC}$ with its naturally inherited exact structure. Since the class of trivial objects is $\class{V}_1 \cap \dwclass{AC} = \tilclass{AC}$, this exact model structure has homotopy category equivalent to $\class{D}(\class{AC})$. But by~\cite[Corollary~5.4]{gillespie-exact model structures}, the homotopy category of this restricted exact model structure is equivalent to the original one, proving the corollary.
\end{proof}

We have just seen that the homotopy category of the Abs clean model structure, which is equivalent to $\class{D}(\class{AC})$, is always compactly generated. In the locally coherent case, $\class{D}(\class{AC})$ becomes the derived category of absolutely pure objects, and Stovicek shows in~\cite{stovicek-purity} that this category is equivalent to $K(Inj)$. Next we prove that, in general, $\class{D}(\class{AC})$ is equivalent to a full subcategory of $K(Inj)$ containing the DG-injective complexes. Recall that a complex of injectives $I$ is called \textbf{DG-injective} if it has the property that all chain maps $E \xrightarrow{} I$, with $E$ an exact complex, are null homotopic.

\begin{definition}\label{def-AC-injective complexes}
Let $\cat{G}$ be a locally type $FP_{\infty}$ category. Call a chain complex $I$ of injectives \textbf{AC-injective} if it has the property that all chain maps $A \xrightarrow{} I$, with $A \in \tilclass{AC}$, are null homotopic. Recall that $\tilclass{AC}$ denotes the class of all exact complexes with absolutely clean cycles.
\end{definition}

Note that any DG-injective complex is automatically AC-injective.

\begin{theorem}\label{them-injective model for abs clean}
Let $\cat{G}$ be a locally type $FP_{\infty}$ category. Then there is a cofibrantly generated abelian model structure on $\cha{G}$, that we call the \textbf{AC-injective model structure},  as follows:
\begin{enumerate}
\item The model structure is injective, meaning all objects are fibrant.
\item The class $\class{F}$ of fibrant objects is the class of AC-injective complexes.
\item The class $\class{V}_1$ of trivial objects is $\leftperp{\class{F}}$.
\item The class $\class{V}_1$ of trivial objects coincides with the class of trivial objects in the Abs clean model structure. Therefore, their homotopy categories coincide.
\end{enumerate}
The homotopy category of the AC-injective model structure is equivalent to the chain homotopy category $K(\textnormal{AC-}Inj)$, of all AC-injective complexes.
\end{theorem}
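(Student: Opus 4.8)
The plan is to realize the AC-injective model structure as the Hovey triple $(\class{A},\class{V}_1,\class{F})$, where $\class{A}$ is the class of all complexes, $\class{V}_1$ is the thick class of trivial objects of the Abs clean model structure from Theorem~\ref{them-Abs clean model struc}, and $\class{F}$ is the class of AC-injective complexes. Its two associated cotorsion pairs should be the injective cotorsion pair $(\class{A},\tilclass{I})$ of Lemma~\ref{lemma-generating cofibrations} --- which is small, hence complete --- and an \emph{AC-injective cotorsion pair} $(\class{V}_1,\class{F})$. Once both are known to be complete, the model structure follows from Lemma~\ref{lemma-half related}(1) applied with $(\class{Q},\tilclass{R})=(\class{A},\tilclass{I})$, $(\tilclass{Q},\class{R})=(\class{V}_1,\class{F})$ and thick class $\class{W}=\class{V}_1$: the hypothesis $\tilclass{Q}=\class{Q}\cap\class{W}$ is the tautology $\class{V}_1=\class{A}\cap\class{V}_1$, and $\tilclass{R}\subseteq\class{W}$ reads $\tilclass{I}\subseteq\class{V}_1$, which holds since an injective complex is exact with injective --- hence, by Proposition~\ref{prop-abs-clean-properties}, absolutely clean --- cycles, so it lies in $\tilclass{AC}\subseteq\class{V}_1$. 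The lemma then yields that $(\class{A},\class{V}_1,\class{F})$ is a Hovey triple with core $\class{A}\cap\class{V}_1\cap\class{F}=\tilclass{I}$, which is exactly assertions (1)--(4): $\class{A}$ is the cofibrant class so all objects are cofibrant, the fibrant class is $\class{F}$, and the trivial class $\class{V}_1$ is by construction the trivial class of the Abs clean structure, so the two homotopy categories coincide.

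The substance of the proof is the AC-injective cotorsion pair. First I would identify $\rightperp{\class{V}_1}$. From the Abs clean Hovey triple one checks that $\class{V}_1$ is exactly the class of complexes $W$ fitting into a short exact sequence $0\to W\to R'\to Q'\to 0$ with $R'\in\tilclass{AC}$ and $Q'\in\leftperp{\dwclass{AC}}$; combining this with the long exact $\Ext$-sequence, with $\tilclass{AC}\subseteq\class{V}_1$ and $\leftperp{\dwclass{AC}}\subseteq\class{V}_1$, and with the heredity of $(\leftperp{\dwclass{AC}},\dwclass{AC})$, gives $\rightperp{\class{V}_1}=\rightperp{\tilclass{AC}}\cap\dwclass{AC}$. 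If $Y$ lies in this class, then testing $\Ext^1_{\cha{G}}(D^n(B),Y)\cong\Ext^1_{\cat{G}}(B,Y_n)$ against absolutely clean $B$ forces each $Y_n\in\class{AC}\cap\rightperp{\class{AC}}$; and $\class{AC}\cap\rightperp{\class{AC}}$ is precisely the class of injectives, since for such a $Y_n$ a cosyzygy $0\to Y_n\to E\to Y_n'\to 0$ with $E$ injective has $Y_n'\in\class{AC}$ ($\class{AC}$ being coresolving), hence splits. So $Y$ is a complex of injectives, and for such a complex the condition ``$\Ext^1_{\cha{G}}(A,Y)=0$ for all $A\in\tilclass{AC}$'' is equivalent to ``$\homcomplex(A,Y)$ is exact for all $A\in\tilclass{AC}$'' --- this is the argument behind the DG-injective cotorsion pair, using degreewise splitting of extensions by a complex of injectives together with the fact that $\tilclass{AC}$ is closed under suspension --- i.e.\ to $Y$ being AC-injective. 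Thus $\class{F}=\rightperp{\class{V}_1}$. Completeness of $(\class{V}_1,\class{F})$, and with it the cotorsion-pair identity $\class{V}_1=\leftperp{\class{F}}$, I would obtain from deconstructibility of $\class{V}_1$ (built via short exact sequences from the deconstructible classes $\tilclass{AC}$ --- recall $\class{AC}$ is deconstructible by Proposition~\ref{prop-abs-clean-deconstructible} --- and $\leftperp{\dwclass{AC}}$), which makes $(\class{V}_1,\class{F})$ a small, complete, hereditary cotorsion pair via the Eklof--Trlifaj machinery as packaged in~\cite{hovey}.

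Cofibrant generation then follows from~\cite[Lemma~6.7]{hovey}, with $I$ from Lemma~\ref{lemma-generating cofibrations} as generating cofibrations and a set of generating monomorphisms for $(\class{V}_1,\class{F})$ as generating trivial cofibrations. Finally, for the homotopy category: since every object is cofibrant and the fibrant objects are exactly $\class{F}$, it is $\class{F}$ modulo the homotopy relation $\sim$, and by~\cite[Corollary~4.8]{gillespie-exact model structures} one has $f\sim g$ iff $g-f$ factors through an object of the core $\tilclass{I}$, i.e.\ through an injective (equivalently contractible) complex --- so $\sim$ is ordinary chain homotopy and the homotopy category is $K(\textnormal{AC-}Inj)$. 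The main obstacle is the second paragraph: proving that $\rightperp{\class{V}_1}$ is exactly the class of AC-injective complexes (which requires both the identity $\class{AC}\cap\rightperp{\class{AC}}=\{\text{injectives}\}$ and the DG-injective-style passage between $\Ext^1$-vanishing and exactness of $\homcomplex$), and establishing that $\class{V}_1$ is deconstructible so that $(\class{V}_1,\class{F})$ is indeed a complete cotorsion pair.
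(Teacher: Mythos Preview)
Your computation $\rightperp{\class{V}_1}=\class{F}$ is correct and pleasant: using the description of $\class{V}_1$ from \cite{gillespie-hovey triples} together with heredity of $(\leftperp{\dwclass{AC}},\dwclass{AC})$ gives $\rightperp{\class{V}_1}=\rightperp{\tilclass{AC}}\cap\dwclass{AC}$, and your argument that this equals the AC-injective complexes is fine. The use of Lemma~\ref{lemma-half related} at the end is also clean.

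The gap is the sentence ``deconstructibility of $\class{V}_1$ (built via short exact sequences from the deconstructible classes $\tilclass{AC}$ and $\leftperp{\dwclass{AC}}$)''. Being expressible as kernels (or cokernels) in short exact sequences whose other two terms range over deconstructible classes does \emph{not} in general yield a deconstructible class, and no result of this form is available in the references you invoke. What you actually need is $\leftperp{\class{F}}\subseteq\class{V}_1$; unwinding, this reduces (after pushing an arbitrary $X\in\leftperp{\class{F}}$ into $\dwclass{AC}$ via enough injectives for $(\leftperp{\dwclass{AC}},\dwclass{AC})$) to the statement $\leftperp{\class{F}}\cap\dwclass{AC}=\tilclass{AC}$. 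That equality is the genuine content of part~(4), and your proposal does not supply it.

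The paper proceeds in the opposite order: it \emph{defines} $\class{V}_1:=\leftperp{\class{F}}$ and obtains completeness immediately from $\class{F}=\rightperp{\class{S}}$ for an explicit set $\class{S}=\{D^n(G_i/C)\}\cup\{A_\alpha\}$ (Baer's criterion together with a deconstructing set $\{A_\alpha\}$ for $\tilclass{AC}$) via \cite{saorin-stovicek}. Parts (1)--(3) then follow exactly as in Theorem~\ref{them-Inj model structure}. The hard step is (4), namely $\leftperp{\class{F}}\cap\dwclass{AC}=\tilclass{AC}$, and for this the paper passes to the exact category $\cha{AC}$ of Grothendieck type (via \cite{stovicek-exact models}): there one has an injective cotorsion pair $(\tilclass{AC},\rightperp{\tilclass{AC}})$, and the paper shows $\rightperp{\tilclass{AC}}=\class{F}$ inside $\cha{AC}$; comparing with the restriction of $(\leftperp{\class{F}},\class{F})$ to $\cha{AC}$ forces the left-hand sides to agree. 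Your route can be completed, but only by supplying this same lemma; the deconstructibility shortcut is not one.
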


\begin{proof}
Let $\{G_i\}$ be a generating set. Letting $C \hookrightarrow G_i$ range through all possible inclusions, set $\class{S} = \{D^n(G_i/C)\} \cup \{A_{\alpha}\}$. Here, $\{A_{\alpha}\}$ is a set of complexes in $\tilclass{AC}$ which generates all others as transfinite extensions. See Proposition~\ref{prop-abs-clean-deconstructible} or~\cite[Lemma~4.3 and~Prop.~4.11]{gillespie-quasi-coherent} or~\cite[Theorem~4.2]{stovicek} for the existence of such a set $\{A_{\alpha}\}$. Let $\class{F}$ be the alleged class of fibrant objects in the statement of the theorem. Using Eklof's lemma, one can deduce that $\rightperp{\class{S}} = \class{F}$. Therefore, by~\cite[Corollary~2.14(2)]{saorin-stovicek}, we have that $(\leftperp{\class{F}},\class{F})$ is a complete cotorsion pair. (It is automatically small too, but we need not describe the generating monomorphisms for our purposes here.) Set $\class{V}_1 = \leftperp{\class{F}}$. Since $\class{F}$ consists of complexes of injectives we see that $\class{V}_1$ is the class of all complexes $W$ such that $\homcomplex(W,I)$ is exact for all $I \in \class{F}$. Similar to Theorem~\ref{them-Inj model structure}, we can argue that $\class{V}_1$ is thick and contains all injective complexes. Hence $(\class{V}_1,\class{F})$ is an injective cotorsion pair by~\cite[Proposition~3.3]{bravo-gillespie-hovey}. That is, (1), (2), and (3) all hold by Hovey's correspondence.

We now prove (4). We use~\cite{gillespie-hovey triples} which says that the thick class $\class{W}$ in any Hovey triple $(\class{Q},\class{W},\class{R})$ is unique. That is, there can only be one thick class $\class{V}_1$ making $(\class{Q},\class{V}_1,\class{R})$ is a Hovey triple.  So in the current case, with $\class{V}_1 = \leftperp{\class{F}}$, we need to show $\leftperp{\tilclass{AC}} \cap \class{V}_1 = \leftperp{\dwclass{AC}}$ and $\class{V}_1 \cap \dwclass{W} = \tilclass{AC}$.

We first show $\class{V}_1 \cap \dwclass{W} = \tilclass{AC}$, by following the method of Stovicek from~\cite[Proposition~6.11]{stovicek-purity}. Our above Propositions~3.9 and~3.10 say that the class $\class{AC}$ is \emph{deconstructible} in the sense of~\cite{stovicek-exact models}. So by~\cite[Theorem~3.16]{stovicek-exact models} it inherits the structure of an exact category of Grothendieck type. In particular, it has enough injectives, and it is easy to see that these are precisely the usual injectives from the ambient category $\cat{G}$. Moreover, by~\cite[Lemma~7.9 and~Theorem~7.11]{stovicek-exact models} we get that $\cha{AC}$, with its inherited degreewise exact structure, has an ``injective'' model structure. It is represented by the cotorsion pair $(\tilclass{AC}, \rightperp{\tilclass{AC}})$, and we emphasize that this is a cotorsion pair \emph{in} the exact category $\cha{AC}$.
We claim that $\rightperp{\tilclass{AC}} = \class{F}$. First, if $X \in \class{F}$, then since $X$ is a complex of injectives, any Yoneda Ext group $\Ext^1_{\cha{AC}}(A,X)$ coincides with the subgroup of all ``degreewise split'' extensions. Since any chain map $A \xrightarrow{} X$ with $A \in \tilclass{AC}$ must be null homotopic, this implies that $\Ext^1_{\cha{AC}}(A,X) = 0$ for all $A \in \tilclass{AC}$. So $\class{F} \subseteq \rightperp{\tilclass{AC}}$.
For the reverse containment, suppose $X \in \cha{AC}$ is in $\rightperp{\tilclass{AC}}$. Then for any absolutely clean object $A \in \class{AC}$, we have $0 = \Ext^1_{\cha{AC}}(D^n(A),X) \cong \Ext^1_{\class{AC}}(A,X_n)$. This means that each $X_n$ is injective in the exact category $\class{AC}$, which as pointed out above, means that each $X_n$ is $\cat{G}$-injective. It now follows that $X \in \class{F}$.

On the other hand, we have the cotorsion pair $(\class{V}_1,\class{F})$, in $\cha{G}$. A straightforward checking shows that it restricts to a cotorsion pair $(\class{V}_1 \cap \dwclass{AC}, \class{F})$ \emph{in} the exact category $\cha{AC}$. To summarize, we have two cotorsion pairs in $\cha{AC}$. They are $(\tilclass{AC}, \class{F})$ and $(\class{V}_1 \cap \dwclass{AC}, \class{F})$. Since their right sides are the same, so must be their left sides. That is, $\class{V}_1 \cap \dwclass{W} = \tilclass{AC}$.

Now $\leftperp{\tilclass{AC}} \cap \class{V}_1 = \leftperp{\dwclass{AC}}$ is automatically true by Lemma~\ref{lemma-half related}(2). This completes the proof.
\end{proof}

\section{A model for Krause's stable derived category and recollement}\label{sec-recollements}

Becker showed in~\cite{becker} that Krause's recollement $S(R) \xrightarrow{} K(Inj) \xrightarrow{} \class{D}(R)$, from~\cite{krause-stable derived cat of a Noetherian scheme}, holds for any ring $R$, even without the noetherian hypothesis. Here $S(R)$ is the full subcategory of $K(Inj)$ consisting of all exact complexes. Krause's original work in~\cite{krause-stable derived cat of a Noetherian scheme} was in the setting of separated noetherian schemes and locally noetherian categories. In practice, such categories quite often come with a set of generators of finite projective dimension. As the author indicated at~\cite{gillespie-ASTA talk}, this hypothesis is connected to the problem of obtaining the recollement, and that is the main theme of this section. We would like to know when a recollement holds and when all three categories in the recollement are compactly generated.

\subsection{Locally finite dimensional categories}\label{subsec-generators of finite projective dimension}
We first will consider Grothendieck categories coming with a set of generators of finite projective dimension. The author first learned of this hypothesis by reading Hovey's~\cite{hovey sheaves} where he used it to construct certain model structures on complexes of sheaves. Later, the current author made systematic use of this hypothesis to lift cotorsion pairs in Grothendieck categories to model structures on the associated chain complex category~\cite{gillespie-degreewise-model-strucs}.

Recall that Grothendieck categories may not have enough projectives. However, $\Ext^n_{\cat{G}}(A,B)$ is still always defined and can be computed using injective resolutions. We say that an object $A$ has \emph{finite projective dimension} if for any object $B$ there is an integer $n$ for which $\Ext^i_n(A,B) = 0$ for all $i>n$. For convenience, we will say a Grothendieck category $\cat{G}$ is \textbf{locally finite dimensional} if it possesses a generating set $\{G_i\}$ for which each $G_i$ has finite projective dimension. If furthermore, each $G_i$ is of type $FP_{\infty}$ we will say $\cat{G}$ is \textbf{locally  finite dimensionally type $\boldsymbol{FP_{\infty}}$}. If the $G_i$ are coherent, it is \textbf{locally finite dimensionally coherent}; if they are noetherian, \textbf{locally finite dimensionally noetherian}.
Building on the list of examples from Section~\ref{subsec-Examples of FP-infinity categories}, we have the following motivating examples.

\begin{example}
Following up on Example~\ref{example-modules}, the generating set $\{{}_RR\}$ shows the category $R$-Mod to be locally finite dimensionally type $FP_{\infty}$. It is locally finite dimensionally coherent (resp. noetherian) precisely when the ring $R$ is left coherent (resp. noetherian). Note that this all generalizes in an obvious way to any Grothendieck category with a set of finitely generated projective generators.
\end{example}

\begin{example}
Following up on Example~\ref{example-chain complexes}, we see that $\cha{G}$ is locally finite dimensionally type $FP_{\infty}$ whenever $\cat{G}$ is such.
\end{example}

\begin{example}
Again, let $(X,\class{O}_X)$ be a ringed space where the underlying space $X$ is a finite dimensional compact manifold. Then $\class{O}_X$-Mod is a locally finite dimensional Grothendieck category.
This holds more generally when $X$ is a finite dimensional compact manifold that is countable at infinity. This example is taken from~\cite[Prop.~3.3]{hovey sheaves}.
\end{example}

\begin{example}
Following up on Example~\ref{example-sheaves}, let $(X,\class{O}_X)$ be a quasi-noetherian ringed space; for example $X$ could be a concentrated scheme. We have seen that the category $\class{O}_X$-Mod of all sheaves of $\class{O}_X$-modules is a locally type $FP_{\infty}$ category. If $X$ is a finite dimensional noetherian scheme, then $\class{O}_X$-Mod is a locally finite dimensionally noetherian category. This follows from Grothendieck's vanishing theorem~\cite[Theorem~2.7]{hartshorne}, since an open subspace of a finite dimensional noetherian space is again a finite dimensional noetherian space. Again, see~\cite[Prop.~3.3]{hovey sheaves}.
\end{example}

\begin{example}
Let Qco($X$) denote the category of quasi-coherent sheaves on $X$, where $X$ is a separated noetherian scheme with a family of ample line bundles. In this case, locally free sheaves of finite rank are generators of finite projective dimension. See~\cite[Prop.~2.3]{hovey sheaves} and also~\cite[Example~4.8]{krause-stable derived cat of a Noetherian scheme}.
Since Qco($X$) is a locally noetherian category we see that it is a locally finitely dimensionally noetherian category. One can check directly that for a locally free sheaf $F$ of finite rank, $\Hom_{\text{Qco(X)}}(F,-)$ preserves direct limits, since direct limits are taken in the category of presheaves when the underlying space $X$ is noetherian~\cite[Ex.~II.1.11]{hartshorne}
\end{example}

\begin{example}
Let Qco($X$) denote the category of quasi-coherent sheaves on $X$, where $X$ is a quasi-projective scheme with coherent structure sheaf. Then as Stovicek points out in~\cite[pp.~31]{stovicek-purity}, the
category Qco($X$) is a locally finite dimensionally coherent category.
\end{example}

\subsection{The exact Inj model structure}\label{subsec-exact Inj model structure}

Here we generalize the exact Inj model structure from~\cite{bravo-gillespie-hovey} to the setting of any locally finite dimensional Grothendieck category. It will show that the stable derived category $S(\cat{G})$ is at least always well generated in this case. Becker's method extends to immediately obtain Krause's recollement in this setting. 

\begin{theorem}\label{them-exact Inj model structure}
Let $\cat{G}$ be a locally finite dimensional Grothendieck category.  Then there is an abelian model structure on $\cha{G}$ that we call the \textbf{exact Inj model structure}. This is an injective model structure, meaning all objects are cofibrant and the trivially fibrant objects are the injective complexes. The fibrant objects are precisely the exact complexes of injectives. The model structure is cofibrantly generated. Explicitly, given a generating set $\{G_i\}$ with each $G_i$ of finite projective dimension, the generating cofibrations can be taken to be the set $I$ of Lemma~\ref{lemma-generating cofibrations}. The generating trivial cofibrations can be taken to be the set $$J_2 = \{\, D^n(C) \hookrightarrow D^n(G_i) \,\} \cup \{\, S^{n-1}(G_i) \hookrightarrow D^n(G_i) \,\},$$
where again $C \hookrightarrow G_i$ ranges through all inclusions of subobjects $C \subseteq G_i$. We note that the class $\class{W}_2$ of trivial objects contains all contractible complexes and that the homotopy category of this model structure is equivalent to $S(\cat{G})$, the stable derived category of Krause.
\end{theorem}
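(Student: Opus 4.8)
The plan is to imitate the construction of the Inj model structure (Theorem~\ref{them-Inj model structure}), building the exact Inj model structure from two complete cotorsion pairs on $\cha{G}$ and invoking~\cite[Lemma~6.7]{hovey}. The first is the injective cotorsion pair $(\class{A},\tilclass{I})$ of Lemma~\ref{lemma-generating cofibrations}, where $\class{A}$ is the class of all complexes and $\tilclass{I}$ the class of injective complexes, generated by the set $I$; this part uses nothing about finite dimensionality. The second will be $(\class{W}_2,\exclass{I})$, where $\exclass{I}$ is the class of exact complexes of injectives and $\class{W}_2=\leftperp{\exclass{I}}$. It is in producing this second pair that local finite dimensionality is essential.

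First I would construct the second cotorsion pair. The injective cotorsion pair $(\cat{G},\class{I})$ is complete and cogenerated by a set by Baer's criterion, and I would lift it to $\cha{G}$ using the degreewise machinery of~\cite{gillespie-degreewise-model-strucs} (compare the exact Inj model structure of~\cite{bravo-gillespie-hovey}). The point of the finite projective dimension hypothesis is precisely to force the complexes $S^n(G_i)$ into $\leftperp{\exclass{I}}$: for $I\in\exclass{I}$ one checks that $\Ext^1_{\cha{G}}(S^n(G_i),I)\cong\Ext^1_{\cat{G}}(G_i,Z_nI)$, and since $Z_nI\cong I_{n+1}/Z_{n+1}I$ with $I_{n+1}$ injective, dimension shifting identifies $\Ext^1_{\cat{G}}(G_i,Z_nI)$ with $\Ext^{k}_{\cat{G}}(G_i,Z_{n+k-1}I)$ for every $k\geq1$, and this vanishes because $G_i$ has finite projective dimension. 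Granting this, \cite{gillespie-degreewise-model-strucs} yields that $(\class{W}_2,\exclass{I})$ is a small cotorsion pair whose generating monomorphisms may be taken to be $J_2$ (equivalently, whose generating cokernels are $\{D^n(G_i/C)\}\cup\{S^n(G_i)\}$).

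Next I would check that $\class{W}_2$ is thick and that $\class{W}_2\cap\exclass{I}=\tilclass{I}$, exactly as in the Inj case. Any $I\in\exclass{I}$ is a complex of injectives, so every short exact sequence of complexes ending in $X$ and beginning with $I$ is degreewise split, whence $\Ext^1_{\cha{G}}(X,I)$ is a homology group of $\homcomplex(X,I)$; as $\exclass{I}$ is closed under suspensions, $X\in\class{W}_2$ if and only if $\homcomplex(X,I)$ is exact for every $I\in\exclass{I}$. Thickness of $\class{W}_2$ is then immediate from the long exact homology sequence of $\homcomplex(-,I)$ together with closure under retracts, and every contractible complex lies in $\class{W}_2$, which is the asserted property. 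For the equality: an $I\in\exclass{I}$ lying in $\class{W}_2$ has $\mathrm{id}_I$ null homotopic, hence is contractible, hence an injective complex; conversely injective complexes are contractible, so lie in $\class{W}_2$ (one may instead invoke~\cite[Proposition~3.3]{bravo-gillespie-hovey}). With $(\class{A},\tilclass{I})$ and $(\class{W}_2,\exclass{I})$ satisfying the hypotheses of~\cite[Lemma~6.7]{hovey}, we obtain a cofibrantly generated abelian model structure on $\cha{G}$ with generating cofibrations $I$, generating trivial cofibrations $J_2$, all objects cofibrant, trivially fibrant objects $\tilclass{I}$, and fibrant objects $\exclass{I}$. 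For the homotopy category, the fundamental theorem identifies it with $\exclass{I}/\!\sim$, and by~\cite[Corollary~4.8]{gillespie-exact model structures} $f\sim g$ if and only if $g-f$ factors through an injective complex; since injective complexes are contractible, $\sim$ is ordinary chain homotopy, so the homotopy category is the chain homotopy category of exact complexes of injectives, namely $S(\cat{G})$.

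The main obstacle is the construction of the second cotorsion pair $(\class{W}_2,\exclass{I})$ and the verification that $J_2$ generates it; this is the only step that genuinely uses local finite dimensionality, and it is precisely why the maps $S^{n-1}(G_i)\hookrightarrow D^n(G_i)$ must be adjoined to the generating trivial cofibrations, in contrast with the Inj model structure. Everything after that is a routine transcription of the proof of Theorem~\ref{them-Inj model structure}.
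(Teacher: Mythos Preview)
Your proposal is correct and follows essentially the same route as the paper: cite~\cite[Proposition~4.6]{gillespie-degreewise-model-strucs} (which packages the dimension-shifting argument you spell out) together with Baer's criterion to obtain the small cotorsion pair $(\class{W}_2,\exclass{I})$ with generating monomorphisms $J_2$, and then repeat verbatim the proof of Theorem~\ref{them-Inj model structure} with $\class{W}_1$, $\dwclass{I}$, $K(Inj)$ replaced by $\class{W}_2$, $\exclass{I}$, $S(\cat{G})$. Your explicit unpacking of where finite projective dimension enters (forcing $S^n(G_i)\in\leftperp{\exclass{I}}$ via $\Ext^1_{\cha{G}}(S^n(G_i),I)\cong\Ext^1_{\cat{G}}(G_i,Z_nI)$ and dimension shifting) is exactly the content hidden behind the citation.
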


\begin{proof}
As in~\cite{gillespie-degreewise-model-strucs}, let $\exclass{I}$ denote the class of all exact complexes of injectives.
It follows from Baer's criterion and~\cite[Proposition~4.6]{gillespie-degreewise-model-strucs} that the pair $(\class{W}_2,\exclass{I})$, where $\class{W}_2 = \leftperp{\exclass{I}}$, is a small cotorsion pair with $J_2$ serving as the generating monomorphisms. (This is where the finite projective dimension hypothesis on the generators $G_i$ is used.) We note that $\class{W}_2$ contains the generating set $\{D^n(G_i)\}$, and that $(\class{W}_2,\exclass{I})$ must also be a functorially complete cotorsion pair by~\cite[Theorem~6.5]{hovey}. The remaining statements are proved in the exact same way that we proved Theorem~\ref{them-Inj model structure}. Simply replace $\class{W}_1$, $\dwclass{I}$, and $K(Inj)$ in that proof with $\class{W}_2$, $\exclass{I}$, and $S(\cat{G})$.
\end{proof}

Becker's method from~\cite{becker} will now apply to show that the recollement of Krause holds. In fact, let $(\class{E}, \dgclass{I})$ denote the cotorsion pair where $\class{E}$ is the class of exact complexes and $\dgclass{I}$ is the class of DG-injective complexes. Then the three cotorsion pairs $$(\class{W}_1,\dwclass{I}) \ , \ \ \ \ (\class{W}_2,\exclass{I}) \ , \ \ \ \ (\class{E}, \dgclass{I})$$
satisfy the hypotheses of~\cite[Theorem~4.6]{gillespie-recollements} and immediately give the following corollary.

\begin{corollary}\label{cor-recollement of krause}
Let $\cat{G}$ be a locally finite dimensional category. Then the canonical functors $S(\cat{G}) \xrightarrow{} K(Inj) \xrightarrow{} \class{D}(\cat{G})$ induce a recollement.
\end{corollary}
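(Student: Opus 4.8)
The plan is to invoke the general machinery from~\cite[Theorem~4.6]{gillespie-recollements}, which produces a recollement of triangulated categories out of three suitably interrelated complete hereditary cotorsion pairs on a single abelian (or exact) category. The three cotorsion pairs we feed into it are the ones already assembled in the excerpt: $(\class{W}_1,\dwclass{I})$ from the Inj model structure of Theorem~\ref{them-Inj model structure}, $(\class{W}_2,\exclass{I})$ from the exact Inj model structure of Theorem~\ref{them-exact Inj model structure}, and the classical DG-injective cotorsion pair $(\class{E},\dgclass{I})$ on $\cha{G}$, where $\class{E}$ is the class of exact complexes. All three are complete; completeness of the first two is part of Theorems~\ref{them-Inj model structure} and~\ref{them-exact Inj model structure}, while completeness of $(\class{E},\dgclass{I})$ is standard for a Grothendieck category (the DG-injective model structure, i.e.\ the usual injective model structure on $\cha{G}$ computing $\class{D}(\cat{G})$).

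First I would record the inclusions of the right-hand classes, $\dgclass{I}\subseteq\dwclass{I}$ and $\exclass{I}\subseteq\dwclass{I}$, together with $\dgclass{I}\cap\exclass{I}=\tilclass{I}$ (an exact DG-injective complex is precisely an injective complex, by~\cite[Proposition~3.3]{bravo-gillespie-hovey}), and dually the inclusions $\class{W}_1\subseteq\class{W}_2$ and $\class{W}_1\subseteq\class{E}$ on the left-hand classes. These, plus the fact that all three pairs are hereditary (each $\class{W}_i$ is thick and contains the contractible complexes, and $\class{E}$, $\dgclass{I}$ form an hereditary pair), are exactly the compatibility hypotheses of~\cite[Theorem~4.6]{gillespie-recollements}. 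The cited theorem then yields that the associated homotopy categories sit in a recollement; here those homotopy categories are, respectively, $S(\cat{G})$ (from $(\class{W}_2,\exclass{I})$, whose fibrant--cofibrant objects are the exact complexes of injectives, and whose homotopy relation is chain homotopy since injective complexes are contractible, exactly as in the proof of Theorem~\ref{them-Inj model structure}), $K(Inj)$ (from $(\class{W}_1,\dwclass{I})$, by Theorem~\ref{them-Inj model structure}), and $\class{D}(\cat{G})$ (from $(\class{E},\dgclass{I})$). The canonical functors realizing $F$ and $G$ are the inclusion $S(\cat{G})\hookrightarrow K(Inj)$ and the Verdier quotient $K(Inj)\to\class{D}(\cat{G})$ sending a complex of injectives to itself viewed in the derived category; that $GX=0$ iff $X$ is (isomorphic to the image of) an exact complex of injectives is precisely the identification $\ker G = S(\cat{G})$.

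The main obstacle is not really conceptual but bookkeeping: one must check that the specific cotorsion pairs in play genuinely satisfy every hypothesis of~\cite[Theorem~4.6]{gillespie-recollements} — in particular the containments among the $\class{W}_i$ and among the injective classes, heredity of all three pairs, and the crucial intersection identity $\class{W}_2\cap\dgclass{I}=\tilclass{I}$ (equivalently $\exclass{I}\cap\class{E}^{\perp}$ computed correctly) that guarantees the glued localization and colocalization sequences assemble into a recollement rather than merely two separate (co)localization sequences. Since $\class{W}_1 = \leftperp{\dwclass{I}}$ is characterized by exactness of $\homcomplex(X,I)$ for all complexes of injectives $I$, and $\class{W}_2=\leftperp{\exclass{I}}$ similarly, these inclusions and intersections all reduce to elementary manipulations with $\homcomplex$-complexes and the contractibility of injective complexes, so the verification is routine; I would simply cite~\cite[Theorem~4.6]{gillespie-recollements} after stating that its hypotheses hold for the displayed triple of cotorsion pairs.
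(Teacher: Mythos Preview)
Your proposal is correct and follows essentially the same approach as the paper: the paper's proof is nothing more than the observation that the three cotorsion pairs $(\class{W}_1,\dwclass{I})$, $(\class{W}_2,\exclass{I})$, and $(\class{E},\dgclass{I})$ satisfy the hypotheses of~\cite[Theorem~4.6]{gillespie-recollements}, yielding the recollement immediately. Your write-up simply spells out in more detail the routine verifications (inclusions, intersections, heredity) that the paper leaves implicit.
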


\subsection{The exact Abs clean model structure}\label{subsec-exact Abs clean model structure}

In the spirit of Section~\ref{subsec-Abs clean model} we would like a version of the exact Inj model structure which is always compactly generated. We are led to the following.

\begin{theorem}\label{them-exact Abs clean model structure}
Let $\cat{G}$ be a locally finite dimensionally $FP_{\infty}$ category.  Then there is an hereditary abelian model structure on $\cha{G}$ that we call the \textbf{exact Abs clean model structure}.
The class of fibrant objects, denoted $\exclass{AC}$, is the class of all exact complexes of absolutely clean objects. The class of trivially fibrant objects is the class $\tilclass{AC}$ of all absolutely clean complexes; the exact complexes with absolutely clean cycles. The model structure is finitely generated and so the homotopy category is compactly generated.
Explicitly, given a generating set $\{G_i\} \subseteq FP_{\infty}(\cat{G})$ with each $G_i$ of finite projective dimension, the generating cofibrations can be taken to be the set $I'$ of Lemma~\ref{lemma-generating cofibrations for Abs clean}.
The generating trivial cofibrations can be taken to be the set $$J'_2 = \{\, D^n(F') \hookrightarrow D^n(F'') \,\} \cup \{\, S^{n-1}(G_i) \hookrightarrow D^n(G_i) \,\},$$
where again $F' \hookrightarrow F''$ ranges through all monomorphisms that fit into a short exact sequence  $0 \xrightarrow{} F' \xrightarrow{} F'' \xrightarrow{} F \xrightarrow{} 0$ with $F'$, $F''$, and $F$ each in $FP_{\infty}(\cat{G})$.
\end{theorem}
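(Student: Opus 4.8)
The plan is to imitate the proof of Theorem~\ref{them-exact Inj model structure}, but with the absolutely clean cotorsion pair of Proposition~\ref{prop-abs-clean-cotorsion-pair} in place of the injective one, and then to splice the resulting ``big'' cotorsion pair together with the ``small'' cotorsion pair $(\leftperp{\tilclass{AC}},\tilclass{AC})$ of Lemma~\ref{lemma-generating cofibrations for Abs clean} by invoking~\cite{gillespie-hovey triples}.

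First I would apply~\cite[Proposition~4.6]{gillespie-degreewise-model-strucs} to the hereditary cotorsion pair $(\leftperp{\class{AC}},\class{AC})$ of Proposition~\ref{prop-abs-clean-cotorsion-pair}. The key observation making this legitimate is that every object of type $FP_{\infty}$ lies in $\leftperp{\class{AC}}$ — this is precisely the defining property of absolutely clean objects — so the chosen generators $\{G_i\}\subseteq FP_{\infty}(\cat{G})$ lie in $\leftperp{\class{AC}}$, and by hypothesis each also has finite projective dimension. This is exactly the input~\cite[Proposition~4.6]{gillespie-degreewise-model-strucs} asks for, and it produces a small cotorsion pair $(\class{W}_2',\exclass{AC})$ in $\cha{G}$, where $\class{W}_2'=\leftperp{\exclass{AC}}$ and $J'_2$ is a set of generating monomorphisms. (This is the sole place the finite projective dimension hypothesis enters, and it is why the extra maps $\{S^{n-1}(G_i)\hookrightarrow D^n(G_i)\}$ appear in $J'_2$: they are what force exactness.) Since $\class{W}_2'$ contains each $D^n(G_i)$, this cotorsion pair is functorially complete by~\cite[Theorem~6.5]{hovey}, and it is hereditary because $(\leftperp{\class{AC}},\class{AC})$ is. One then checks, exactly as in the proof of Theorem~\ref{them-Abs clean model struc} via~\cite[Proposition~3.2]{gillespie-degreewise-model-strucs}, that the core $\leftperp{\exclass{AC}}\cap\exclass{AC}$ is the class of contractible complexes with components in $\leftperp{\class{AC}}\cap\class{AC}$ — the same core already computed for $(\leftperp{\tilclass{AC}},\tilclass{AC})$ in the proof of Theorem~\ref{them-Abs clean model struc}.

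Now I have two complete hereditary cotorsion pairs $(\leftperp{\tilclass{AC}},\tilclass{AC})$ and $(\leftperp{\exclass{AC}},\exclass{AC})$ with equal cores, and $\tilclass{AC}\subseteq\exclass{AC}$ (an exact complex with cycles in $\class{AC}$ is degreewise in $\class{AC}$, since $\class{AC}$ is closed under extensions). Hence~\cite{gillespie-hovey triples} furnishes a unique thick class $\class{W}_2'$ for which $(\leftperp{\tilclass{AC}},\class{W}_2',\exclass{AC})$ is a Hovey triple with $\leftperp{\tilclass{AC}}\cap\class{W}_2'=\leftperp{\exclass{AC}}$ and $\class{W}_2'\cap\exclass{AC}=\tilclass{AC}$. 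This is an hereditary abelian model structure whose fibrant objects are $\exclass{AC}$ and whose trivially fibrant objects are $\tilclass{AC}$, as claimed. By~\cite[Lemma~6.7]{hovey} it is cofibrantly generated, with generating cofibrations $I'$ (the cotorsion pair $(\leftperp{\tilclass{AC}},\tilclass{AC})$ being shared with the Abs clean model structure of Theorem~\ref{them-Abs clean model struc}) and generating trivial cofibrations $J'_2$. Since the domains and codomains of all maps in $I'$ and $J'_2$ are finitely presented objects of $\cha{G}$, the model structure is finitely generated in the sense of~\cite[Section~7.4]{hovey}, so by~\cite[Corollary~7.4.4]{hovey} the cokernels of the maps in $I'$ form a set of compact weak generators, and the homotopy category is compactly generated.

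The main obstacle, and essentially the only nonroutine step, is verifying that the two inputs above line up: namely (i) that~\cite[Proposition~4.6]{gillespie-degreewise-model-strucs} genuinely applies to the absolutely clean cotorsion pair under just the stated hypothesis — which rests on the inclusion $FP_{\infty}(\cat{G})\subseteq\leftperp{\class{AC}}$ together with finite projective dimension of the $G_i$ — and (ii) the identification of the core $\leftperp{\exclass{AC}}\cap\exclass{AC}$ with the contractible complexes having components in $\leftperp{\class{AC}}\cap\class{AC}$, needed to match cores before applying~\cite{gillespie-hovey triples}. Everything else is a transcription of the arguments already carried out in Theorems~\ref{them-Inj model structure},~\ref{them-Abs clean model struc}, and~\ref{them-exact Inj model structure}.
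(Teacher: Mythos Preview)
Your proposal is correct and follows essentially the same route as the paper: apply~\cite[Proposition~4.6]{gillespie-degreewise-model-strucs} to the absolutely clean cotorsion pair (using the finite projective dimension of the generators), then imitate Theorem~\ref{them-Abs clean model struc} by matching cores and invoking~\cite{gillespie-hovey triples} to obtain the Hovey triple $(\leftperp{\tilclass{AC}},\class{V}_2,\exclass{AC})$, with finite generation and compact generation following exactly as before. One minor slip: you reuse the symbol $\class{W}_2'$ first for $\leftperp{\exclass{AC}}$ and then for the thick class of trivial objects; these are distinct (the paper calls the latter $\class{V}_2$), so rename one of them.
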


\begin{proof}
We again apply~\cite[Proposition~4.6]{gillespie-degreewise-model-strucs} but to the small cotorsion pair $(\leftperp{\class{AC}},\class{AC})$ of Proposition~\ref{prop-abs-clean-cotorsion-pair}. This gives us a small cotorsion pair $(\leftperp{\exclass{AC}},\exclass{AC})$ with the described set $J'_2$ serving as the generating monomorphisms. (Again, the cited proposition uses the finite projective dimension hypothesis on the generators.) The rest of the theorem follows by imitating the proof of Theorem~\ref{them-Abs clean model struc} to wind up with a Hovey triple $(\leftperp{\tilclass{AC}}, \class{V}_2, \exclass{AC})$.
\end{proof}

For the following corollary, let $\class{D}(\class{AC})$ denote the homotopy category of the Abs clean model structure of Theorem~\ref{them-Abs clean model struc}, and let $S(\class{AC})$ denote the homotopy category of the exact Abs clean model structure of Theorem~\ref{them-exact Abs clean model structure}.

\begin{corollary}\label{cor-FP-infinity recollement of krause}
Let $\cat{G}$ be a locally finite dimensionally type $FP_{\infty}$ category. Then there is a recollement $S(\class{AC}) \xrightarrow{} \class{D}(\class{AC}) \xrightarrow{} \class{D}(\cat{G})$, and all three are compactly generated triangulated categories.
\end{corollary}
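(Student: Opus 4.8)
The approach is to realize the recollement as the homotopy‑category shadow of three compatible abelian model structures on $\cha{G}$, in direct analogy with how Corollary~\ref{cor-recollement of krause} was obtained from the cotorsion pairs $(\class{W}_1,\dwclass{I})$, $(\class{W}_2,\exclass{I})$, $(\class{E},\dgclass{I})$ via~\cite[Theorem~4.6]{gillespie-recollements}; one essentially replaces ``$Inj$'' by ``$\class{AC}$'' throughout.

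Two of the three model structures are already in hand. The Abs clean model structure of Theorem~\ref{them-Abs clean model struc} is the finitely generated Hovey triple $(\leftperp{\tilclass{AC}},\class{V}_1,\dwclass{AC})$ with homotopy category $\class{D}(\class{AC})$ and associated cotorsion pair $(\leftperp{\dwclass{AC}},\dwclass{AC})$. The exact Abs clean model structure of Theorem~\ref{them-exact Abs clean model structure} is the finitely generated Hovey triple $(\leftperp{\tilclass{AC}},\class{V}_2,\exclass{AC})$ with homotopy category $S(\class{AC})$ and associated cotorsion pair $(\leftperp{\exclass{AC}},\exclass{AC})$. For the third, apply the standard construction of~\cite{gillespie} to the absolutely clean cotorsion pair $(\leftperp{\class{AC}},\class{AC})$ of Proposition~\ref{prop-abs-clean-cotorsion-pair}: this yields the cotorsion pairs $(\widetilde{\leftperp{\class{AC}}},\dgclass{AC})$ and $(\leftperp{\tilclass{AC}},\tilclass{AC})$ on $\cha{G}$, hence the Hovey triple $(\leftperp{\tilclass{AC}},\class{E},\dgclass{AC})$, where $\class{E}$ is the class of exact complexes and $\dgclass{AC}$ that of DG‑absolutely‑clean complexes. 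Since its trivial class is $\class{E}$, its weak equivalences are the quasi‑isomorphisms and its homotopy category is $\class{D}(\cat{G})$; its associated cotorsion pair is $(\widetilde{\leftperp{\class{AC}}},\dgclass{AC})$. Note that all three Hovey triples have the common cofibrant class $\leftperp{\tilclass{AC}}$.

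I would then check that the three cotorsion pairs $(\leftperp{\dwclass{AC}},\dwclass{AC})$, $(\leftperp{\exclass{AC}},\exclass{AC})$, $(\widetilde{\leftperp{\class{AC}}},\dgclass{AC})$ are interrelated exactly as the injective triple in Corollary~\ref{cor-recollement of krause}: on the right one has $\exclass{AC}\subseteq\dwclass{AC}\supseteq\dgclass{AC}$ with $\exclass{AC}\cap\dgclass{AC}=\tilclass{AC}$, and correspondingly $\leftperp{\dwclass{AC}}\subseteq\leftperp{\exclass{AC}}$, $\leftperp{\dwclass{AC}}\subseteq\widetilde{\leftperp{\class{AC}}}$, and $\leftperp{\exclass{AC}}\cap\widetilde{\leftperp{\class{AC}}}=\leftperp{\dwclass{AC}}$. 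Granting this, \cite[Theorem~4.6]{gillespie-recollements} delivers the recollement $S(\class{AC})\xrightarrow{}\class{D}(\class{AC})\xrightarrow{}\class{D}(\cat{G})$, in which (up to homotopy) the left functor is the inclusion of the exact complexes of absolutely clean objects and the right functor is the canonical comparison with the derived category, with kernel $S(\class{AC})$.

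Finally, compact generation: $\class{D}(\class{AC})$ is compactly generated by Corollary~\ref{cor-compact generation of abs clean derived cat}, and $S(\class{AC})$ because the exact Abs clean model structure is finitely generated. For $\class{D}(\cat{G})$ one uses the standing hypothesis: each $G_i$ is of type $FP_{\infty}$, so $\Ext^{n}_{\cat{G}}(G_i,-)$ commutes with all direct limits, in particular with coproducts, for every $n$, and $G_i$ has finite projective dimension, so the morphism groups $\Hom_{\class{D}(\cat{G})}(S^0(G_i),X[n])$ are controlled by finitely many such $\Ext$ groups; hence $S^0(G_i)$ is a compact object of $\class{D}(\cat{G})$, and as $\{G_i\}$ generates $\cat{G}$ the set $\{S^{n}(G_i)\}$ is a compact generating set. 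The main obstacle is the construction in paragraph two, namely pinning down the correct DG‑absolutely‑clean model for $\class{D}(\cat{G})$ and verifying that its cotorsion pair dovetails with the other two exactly as in the injective case; this rests on the general relations of~\cite{gillespie} between a cotorsion pair in $\cat{G}$ and the two it induces in $\cha{G}$, together with the identities $\dgclass{AC}\cap\class{E}=\tilclass{AC}$ and $\exclass{AC}\cap\dgclass{AC}=\tilclass{AC}$. Once this is set up, the remainder is a line‑by‑line transcription of the proof of Corollary~\ref{cor-recollement of krause}, the finite‑projective‑dimension half of the hypothesis being precisely what makes the exact Abs clean model structure — hence the term $S(\class{AC})$ — exist.
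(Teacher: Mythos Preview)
Your approach is sound in outline but differs from the paper's in the choice of the third model structure and the recollement machinery invoked. The paper does not build a DG-absolutely-clean model via~\cite{gillespie}; instead it appeals directly to~\cite[Theorem~4.7]{gillespie-degreewise-model-strucs}, which from the two Hovey triples $\class{M}_1=(\leftperp{\tilclass{AC}},\class{V}_1,\dwclass{AC})$ and $\class{M}_2=(\leftperp{\tilclass{AC}},\class{V}_2,\exclass{AC})$ automatically produces a third Hovey triple $\class{M}_3=(\leftperp{\exclass{AC}},\class{E},\dwclass{AC})$. Note the different cofibrant class: the paper's $\class{M}_3$ has fibrant objects $\dwclass{AC}$ (not $\dgclass{AC}$) and cofibrant objects $\leftperp{\exclass{AC}}$ (not $\leftperp{\tilclass{AC}}$). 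This is advantageous because $J'_2$ and $J'_1$ then serve as generating cofibrations and generating trivial cofibrations for $\class{M}_3$, and since these have finitely presented domains and codomains, compact generation of $\class{D}(\cat{G})$ falls out immediately from the finitely generated model structure---no separate argument with $S^n(G_i)$ is needed. The recollement itself is obtained from~\cite[Theorem~3.4]{gillespie-mock projectives} rather than~\cite[Theorem~4.6]{gillespie-recollements}.

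That last point is where you should be cautious: the instances of~\cite[Theorem~4.6]{gillespie-recollements} used elsewhere in the paper (Corollaries~\ref{cor-recollement of krause} and~\ref{cor-AC-injective recollement of Krause}) all involve \emph{injective} cotorsion pairs, whereas your three pairs $(\leftperp{\dwclass{AC}},\dwclass{AC})$, $(\leftperp{\exclass{AC}},\exclass{AC})$, $(\widetilde{\leftperp{\class{AC}}},\dgclass{AC})$ are not injective. Your setup---three Hovey triples sharing a common cofibrant class---is exactly the situation handled by~\cite[Theorem~3.4]{gillespie-mock projectives}, so the fix is simply to cite that instead. With that adjustment your route is a legitimate alternative; what the paper's choice buys is that it bypasses the verification you flag as ``the main obstacle'' (the identities involving $\dgclass{AC}$) and gets compact generation of $\class{D}(\cat{G})$ for free.
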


\begin{proof}
The Abs clean model structure of Theorem~\ref{them-Abs clean model struc} is represented by the Hovey triple $\class{M}_1 = (\leftperp{\tilclass{AC}}, \class{V}_1, \dwclass{AC})$. The exact Abs clean model structure of Theorem~\ref{them-exact Abs clean model structure} is represented by the Hovey triple $\class{M}_2 = (\leftperp{\tilclass{AC}}, \class{V}_2, \exclass{AC})$.
We now appeal to~\cite[Theorem~4.7]{gillespie-degreewise-model-strucs}. This theorem immediately provides a Hovey triple $\class{M}_3 = (\leftperp{\exclass{AC}}, \class{E}, \dwclass{AC})$ where $\class{E}$ is the class of exact complexes. So its homotopy category is the usual derived category $\class{D}(\cat{G})$.  Moreover, $J'_2$ is the set of generating cofibrations for $\class{M}_3$ while $J'_1$ is the set of generating trivial cofibrations for $\class{M}_3$.  Since the domains and codomains of all maps in $J'_1$ and $J'_2$ are finitely presented, this means that $\class{D}(\cat{G})$ is also compactly generated. The existence of a recollement is immediate from~\cite[Theorem~3.4]{gillespie-mock projectives}.
\end{proof}

\subsection{The exact AC-injective model structure}
Still, we are letting $\cat{G}$ be a locally finite dimensionally type $FP_{\infty}$ category.
We have just seen that the homotopy category of the exact Abs clean model structure is compactly generated. Whenever $\cat{G}$ is locally coherent, Ho($\class{M}$) will be equivalent to $S(\cat{G})$, the homotopy category of all exact complexes of injectives. In the general case, we have in the spirit of Theorem~\ref{them-injective model for abs clean}, the following injective model for the exact Abs clean model structure.

\begin{theorem}\label{them-injective model for exact abs clean}
Let $\cat{G}$ be a locally finite dimensionally type $FP_{\infty}$ category. Then there is a cofibrantly generated abelian model structure on $\cha{G}$, that we call the \textbf{exact AC-injective model structure},  as follows:
\begin{enumerate}
\item The model structure is injective, meaning all objects are fibrant.
\item The class $ex\class{F}$ of fibrant objects is the class of all exact AC-injective complexes. (See Definition~\ref{def-AC-injective complexes}.)
\item The class $\class{V}_2$ of trivial objects is $\leftperp{ex\class{F}}$.
\item The class $\class{V}_2$ of trivial objects coincides with the class of trivial objects in the exact Abs clean model structure. Therefore, their homotopy categories coincide.
\end{enumerate}
The homotopy category of the exact AC-injective model structure is equivalent to $S(\textnormal{AC-}Inj)$, the chain homotopy category of all exact AC-injective complexes.
\end{theorem}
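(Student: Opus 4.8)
The plan is to follow the proof of Theorem~\ref{them-injective model for abs clean} essentially line by line, inserting the finite projective dimension hypothesis at exactly the place where the proof of Theorem~\ref{them-exact Abs clean model structure} used it. First I would produce the cotorsion pair $(\leftperp{ex\class{F}}, ex\class{F})$ by cogenerating it from a set. Fix a generating set $\{G_i\} \subseteq FP_{\infty}(\cat{G})$ with each $G_i$ of finite projective dimension, let $C$ range over all subobjects $C \subseteq G_i$, and let $\{A_{\alpha}\}$ be a set of complexes generating all of $\tilclass{AC}$ as transfinite extensions (Proposition~\ref{prop-abs-clean-deconstructible}). I would take $\class{S} = \{ D^n(G_i/C) \} \cup \{ S^n(G_i) \} \cup \{ A_{\alpha} \}$ and argue, using Eklof's lemma, that $\rightperp{\class{S}} = ex\class{F}$: orthogonality to the disks $D^n(G_i/C)$ forces each component $Y_n$ to be injective (Baer's criterion, via $\Ext^1_{\cha{G}}(D^n(A),Y) \cong \Ext^1_{\cat{G}}(A, Y_n)$); orthogonality to the spheres $S^n(G_i)$ then forces $Y$ to be exact --- this is the one step that needs the finite projective dimension of the $G_i$, exactly as in~\cite[Proposition~4.6]{gillespie-degreewise-model-strucs}; and orthogonality to $\{A_{\alpha}\}$, boosted by Eklof's lemma, forces every chain map $A \xrightarrow{} Y$ with $A \in \tilclass{AC}$ to be null homotopic, i.e.\ $Y$ to be AC-injective (Definition~\ref{def-AC-injective complexes}). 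By~\cite[Corollary~2.14(2)]{saorin-stovicek} it then follows that $(\leftperp{ex\class{F}}, ex\class{F})$ is a complete (and automatically small) cotorsion pair, and I would set $\class{V}_2 = \leftperp{ex\class{F}}$.

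Next, since every object of $ex\class{F}$ is a complex of injectives, $\class{V}_2$ is exactly the class of complexes $W$ for which $\homcomplex(W, I)$ is exact for all $I \in ex\class{F}$; the retract and two-out-of-three arguments used for $\class{W}_1$ in the proof of Theorem~\ref{them-Inj model structure} then show that $\class{V}_2$ is thick and contains every contractible complex, hence every injective complex. By~\cite[Proposition~3.3]{bravo-gillespie-hovey}, $(\class{V}_2, ex\class{F})$ is therefore an injective cotorsion pair, which yields (1), (2) and (3) through Hovey's correspondence. For (4) I would invoke the uniqueness of the thick class in a Hovey triple~\cite{gillespie-hovey triples}: to identify $\class{V}_2 = \leftperp{ex\class{F}}$ with the class of trivial objects of the exact Abs clean model structure (Theorem~\ref{them-exact Abs clean model structure}), it is enough to check that $(\leftperp{\tilclass{AC}}, \class{V}_2, \exclass{AC})$ is a Hovey triple, i.e.\ that $\class{V}_2 \cap \exclass{AC} = \tilclass{AC}$ and $\leftperp{\tilclass{AC}} \cap \class{V}_2 = \leftperp{\exclass{AC}}$. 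Granting the first identity, the second follows from Lemma~\ref{lemma-half related}(2), applied to the complete cotorsion pairs $(\leftperp{\tilclass{AC}}, \tilclass{AC})$ (Lemma~\ref{lemma-generating cofibrations for Abs clean}) and $(\leftperp{\exclass{AC}}, \exclass{AC})$ (Theorem~\ref{them-exact Abs clean model structure}) together with the thick class $\class{V}_2$, since $ex\class{F} \subseteq \exclass{AC}$ gives $\leftperp{\exclass{AC}} \subseteq \class{V}_2$. Finally, as in Theorems~\ref{them-Inj model structure} and~\ref{them-injective model for abs clean}, the formal homotopy relation of this injective model structure is ``factoring through an injective complex'', and since injective complexes are contractible this is ordinary chain homotopy; hence the homotopy category is the chain homotopy category of all exact AC-injective complexes, $S(\textnormal{AC-}Inj)$, and it agrees with the homotopy category of the exact Abs clean model structure.

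The hard part will be the identity $\class{V}_2 \cap \exclass{AC} = \tilclass{AC}$, and in particular the inclusion $\class{V}_2 \cap \exclass{AC} \subseteq \tilclass{AC}$, since $\class{V}_2 = \leftperp{ex\class{F}}$ is strictly larger than the class $\leftperp{\class{F}}$ that played this role in Theorem~\ref{them-injective model for abs clean}. I would attack it by adapting Stovicek's argument from~\cite[Proposition~6.11]{stovicek-purity}: by Propositions~\ref{prop-abs-clean-properties} and~\ref{prop-abs-clean-deconstructible} the class $\class{AC}$ is deconstructible, so by~\cite{stovicek-exact models} the category $\cha{AC}$ with its degreewise exact structure is of Grothendieck type, and in it orthogonality of a complex of injectives to $\tilclass{AC}$ coincides with being AC-injective; I would then restrict the cotorsion pair $(\class{V}_2, ex\class{F})$ to $\cha{AC}$ and compare it with the corresponding ``injective'' cotorsion pair there, whose left-hand class is $\tilclass{AC}$, which forces $\class{V}_2 \cap \exclass{AC} = \tilclass{AC}$. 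Relative to Theorem~\ref{them-injective model for abs clean} the genuinely new difficulty is that the $\cat{G}$-exactness constraint must be carried through this comparison; and the very fact that exactness can be cogenerated at all --- so that $\rightperp{\class{S}} = ex\class{F}$ in the first paragraph --- is precisely what the finite projective dimension hypothesis buys us via~\cite[Proposition~4.6]{gillespie-degreewise-model-strucs}, so the two delicate points are really the same point seen twice.
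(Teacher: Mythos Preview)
Your treatment of parts (1)--(3) is essentially identical to the paper's: the same cogenerating set $\class{S}$, the same appeal to Eklof's lemma and~\cite[Corollary~2.14(2)]{saorin-stovicek}, the same thickness argument for $\class{V}_2$, and the same invocation of~\cite[Proposition~3.3]{bravo-gillespie-hovey}.

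The gap is in your approach to (4), specifically in the key identity $\class{V}_2 \cap \exclass{AC} = \tilclass{AC}$. You propose to rerun the {\v{S}}\v{t}ov{\'{\i}}{\v{c}}ek restriction argument from Theorem~\ref{them-injective model for abs clean}: restrict $(\class{V}_2, ex\class{F})$ to $\cha{AC}$ and compare with the injective cotorsion pair there. But that injective cotorsion pair in $\cha{AC}$ is $(\tilclass{AC}, \class{F})$, with right-hand side \emph{all} AC-injectives, not $ex\class{F}$. The restricted pair $(\class{V}_2 \cap \dwclass{AC}, ex\class{F})$ has a strictly smaller right side, so matching right sides fails and you cannot conclude the left sides agree. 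There is no obvious ``exact'' injective cotorsion pair $(\tilclass{AC}, ex\class{F})$ in $\cha{AC}$ to compare with, because $\rightperp{\tilclass{AC}}$ in $\cha{AC}$ is $\class{F}$, not $ex\class{F}$; and the objects $S^n(G_i)$ that would detect exactness need not lie in $\cha{AC}$ at all. You flag ``carrying exactness through the comparison'' as the new difficulty, but the proposal does not actually overcome it.

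The paper sidesteps this entirely by bootstrapping from the already-proved Theorem~\ref{them-injective model for abs clean}. First it applies Lemma~\ref{lemma-half related}(2) to the pairs $(\leftperp{\class{F}}, \class{F})$ and $(\leftperp{ex\class{F}}, ex\class{F})$ with the thick class $\class{E}$ of exact complexes: since $ex\class{F} = \class{E} \cap \class{F}$ and $\leftperp{\class{F}} \subseteq \class{E}$ (because $\dgclass{I} \subseteq \class{F}$), one obtains the Hovey triple $(\leftperp{ex\class{F}}, \class{E}, \class{F})$, hence $\class{V}_2 \cap \class{E} = \leftperp{\class{F}} = \class{V}_1$. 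Then
\[
\class{V}_2 \cap \exclass{AC} \;=\; \class{V}_2 \cap (\class{E} \cap \dwclass{AC}) \;=\; (\class{V}_2 \cap \class{E}) \cap \dwclass{AC} \;=\; \class{V}_1 \cap \dwclass{AC} \;=\; \tilclass{AC},
\]
the last equality being exactly part (4) of Theorem~\ref{them-injective model for abs clean}. A second application of Lemma~\ref{lemma-half related}(2) then finishes, as in your outline. So the crucial move you are missing is the intermediate Hovey triple $(\class{V}_2, \class{E}, \class{F})$, which converts the exact problem into the non-exact one already solved.
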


\begin{proof}
Let $\{G_i\}$ be a generating set with each $G_i$ of finite projective dimension. Letting $C \hookrightarrow G_i$ range through all possible inclusions, set $\class{S} = \{D^n(G_i/C)\} \cup \{A_{\alpha}\} \cup \{S^n(G_i)\}$. As in the proof of Theorem~\ref{them-injective model for abs clean}, $\{A_{\alpha}\}$ is a set of complexes in $\tilclass{AC}$ which generates all others as transfinite extensions. Using~\cite[Lemma~4.5]{gillespie-degreewise-model-strucs} and Eklof's lemma, one can deduce that $\rightperp{\class{S}} \subseteq ex\class{F}$. On the other hand, since each $G_i$ has finite projective dimension we can argue that $\rightperp{\class{S}} \supseteq ex\class{F}$. Hence   $\rightperp{\class{S}} = ex\class{F}$.
So by~\cite[Corollary~2.14(2)]{saorin-stovicek} we get that $(\leftperp{ex\class{F}},ex\class{F})$ is a complete cotorsion pair. (Again it is automatically small too.) Setting $\class{V}_2 = \leftperp{ex\class{F}}$, we argue as in Theorem~\ref{them-injective model for abs clean} that we get an abelian model structure with (1), (2), and (3) all holding.

We now prove (4). Let $\class{E}$ denote the class of all exact complexes. Also, let $\class{F}$ be as in Theorem~\ref{them-injective model for abs clean} where we have already shown $(\leftperp{\tilclass{AC}}, \leftperp{\class{F}}, \dwclass{AC})$ to be a Hovey triple. Since $ex\class{F} = \class{E} \cap \class{F}$ and $\leftperp{\class{F}} \subseteq \class{E}$, Lemma~\ref{lemma-half related}(2) tells us $(\leftperp{ex\class{F}}, \class{E}, \class{F})$ is also a Hovey triple.
Using this, we compute
$$\leftperp{ex\class{F}} \cap \exclass{AC} = \leftperp{ex\class{F}} \cap (\class{E} \cap \dwclass{AC}) =  (\leftperp{ex\class{F}} \cap \class{E}) \cap \dwclass{AC} = \leftperp{\class{F}} \cap \dwclass{AC} = \tilclass{AC}.$$
So since $\tilclass{AC} = \leftperp{ex\class{F}} \cap \exclass{AC}$ and $\leftperp{\exclass{AC}} \subseteq \leftperp{ex\class{F}}$, yet another application of Lemma~\ref{lemma-half related}(2) tells us that $(\leftperp{\tilclass{AC}}, \leftperp{ex\class{F}}, \exclass{AC})$ is also a Hovey triple! Since the thick class in a Hovey triple is unique~\cite{gillespie-hovey triples}, this proves that $\class{V}_2 = \leftperp{ex\class{F}}$ is the class of trivial objects in the exact Abs clean model structure of Theorem~\ref{them-exact Abs clean model structure}.
\end{proof}

In summary, we have shown that there are three injective cotorsion pairs $$(\class{V}_1, \class{F}) \ , \ \ \ \ (\class{V}_2,ex\class{F}) \ , \ \ \ \ (\class{E}, \dgclass{I}).$$
They satisfy the hypotheses of~\cite[Theorem~4.6]{gillespie-recollements} and so immediately give the following corollary.

\begin{corollary}\label{cor-AC-injective recollement of Krause}
Let $\cat{G}$ be a locally finite dimensionally type $FP_{\infty}$ category. Then the canonical functors $S(\textnormal{AC-}Inj) \xrightarrow{} K(\textnormal{AC-}Inj) \xrightarrow{} \class{D}(\cat{G})$ induce a recollement and all three are compactly generated triangulated categories.
\end{corollary}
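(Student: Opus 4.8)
The plan is to obtain this corollary in essentially one stroke, by feeding three already-constructed injective cotorsion pairs into the recollement machinery of~\cite[Theorem~4.6]{gillespie-recollements}, exactly as Corollary~\ref{cor-recollement of krause} was obtained from the triple $(\class{W}_1,\dwclass{I})$, $(\class{W}_2,\exclass{I})$, $(\class{E},\dgclass{I})$. First I would collect the three relevant injective cotorsion pairs in $\cha{G}$: the pair $(\class{V}_1,\class{F})$ underlying the AC-injective model structure of Theorem~\ref{them-injective model for abs clean}, whose homotopy category is $K(\textnormal{AC-}Inj)$; the pair $(\class{V}_2,ex\class{F})$ underlying the exact AC-injective model structure of Theorem~\ref{them-injective model for exact abs clean}, whose homotopy category is $S(\textnormal{AC-}Inj)$; and the classical pair $(\class{E},\dgclass{I})$, with $\class{E}$ the exact complexes and $\dgclass{I}$ the DG-injective complexes, whose homotopy category is $\class{D}(\cat{G})$.

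Next I would verify that these three pairs interlock as required by~\cite[Theorem~4.6]{gillespie-recollements}. The needed inclusions are all at hand: every DG-injective complex is AC-injective, so $\dgclass{I}\subseteq\class{F}$; the exact AC-injective complexes are exactly $\class{E}\cap\class{F}$, so $ex\class{F}\subseteq\class{F}$; and the trivial objects of the AC-injective model structure are exact, giving $\class{V}_1\subseteq\class{E}$, and in fact $\class{V}_1=\class{V}_2\cap\class{E}$. This last equality is not new work: it is precisely what was extracted in the proof of Theorem~\ref{them-injective model for exact abs clean}, where $(\leftperp{ex\class{F}},\class{E},\class{F})$ was shown via Lemma~\ref{lemma-half related}(2) to be a Hovey triple. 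Granting these, \cite[Theorem~4.6]{gillespie-recollements} produces a recollement whose middle term is the homotopy category of the AC-injective model structure, whose left-hand term is the homotopy category of the exact AC-injective model structure, and whose right-hand term is $\class{D}(\cat{G})$; identifying these with $K(\textnormal{AC-}Inj)$, $S(\textnormal{AC-}Inj)$, and $\class{D}(\cat{G})$ via Theorems~\ref{them-injective model for abs clean} and~\ref{them-injective model for exact abs clean}, and checking the induced functors are the obvious ones, yields the stated recollement.

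Finally, for compact generation I would quote the earlier corollaries rather than reprove anything. By part~(4) of Theorem~\ref{them-injective model for abs clean} the trivial class of the AC-injective model structure agrees with that of the Abs clean model structure, so $K(\textnormal{AC-}Inj)\simeq\class{D}(\class{AC})$, which is compactly generated by Corollary~\ref{cor-compact generation of abs clean derived cat}; by part~(4) of Theorem~\ref{them-injective model for exact abs clean} similarly $S(\textnormal{AC-}Inj)\simeq S(\class{AC})$, compactly generated by Corollary~\ref{cor-FP-infinity recollement of krause}; and $\class{D}(\cat{G})$ is compactly generated, again by Corollary~\ref{cor-FP-infinity recollement of krause}. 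I expect the only genuine work to lie in the middle step---confirming that the three injective cotorsion pairs meet the precise hypotheses of~\cite[Theorem~4.6]{gillespie-recollements} and that the three resulting exact functors really are the canonical inclusion $S(\textnormal{AC-}Inj)\hookrightarrow K(\textnormal{AC-}Inj)$ and quotient $K(\textnormal{AC-}Inj)\to\class{D}(\cat{G})$---since everything else is either a verbatim invocation of a cited theorem or was already established in Sections~\ref{subsec-AC-injectives} and~\ref{subsec-exact Abs clean model structure}.
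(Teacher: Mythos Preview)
Your proposal is correct and follows essentially the same approach as the paper. The paper's proof consists of a single sentence immediately preceding the corollary: it lists the three injective cotorsion pairs $(\class{V}_1,\class{F})$, $(\class{V}_2,ex\class{F})$, $(\class{E},\dgclass{I})$, asserts that they satisfy the hypotheses of~\cite[Theorem~4.6]{gillespie-recollements}, and declares the corollary immediate; you spell out the interlocking inclusions and the compact-generation citations explicitly, but the route is identical.
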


\section{The (exact) AC-acyclic model structures}\label{sec-AC-acyclic models}

Let $\cat{G}$ be a locally type $FP_{\infty}$ category. We now shift our focus to Gorenstein homological algebra in $\cat{G}$. Following~\cite{bravo-gillespie-hovey}, we say that a complex $I$ of injectives is \textbf{AC-acyclic} if $\Hom_{\cat{G}}(A,I)$  is an exact complex for all absolutely clean objects $A$. If $I$ itself is also exact, we call $I$ an \textbf{exact AC-acyclic} complex.
Note that in the case that $\cat{G}$ is locally noetherian, an exact AC-acyclic complex is precisely what is often called a \textbf{totally acyclic complex of injectives}. We now put a cofibrantly generated model structure on $\cha{G}$ whose homotopy category is equivalent to the chain homotopy category of all AC-acyclic complexes. In the case that $\cat{G}$ is locally finite dimensionally type $FP_{\infty}$, we put a cofibrantly generated model structure on $\cha{G}$ whose homotopy category is equivalent to the chain homotopy category of all exact AC-acyclic complexes.

\begin{theorem}\label{them-AC-acyclic model}
Let $\cat{G}$ be a locally type $FP_{\infty}$ category. Then there is a cofibrantly generated abelian model structure on $\cha{G}$ that we call the \textbf{AC-acyclic model structure}. It is an injective model structure, meaning all objects are cofibrant and the trivially fibrant objects are the injective complexes. The fibrant objects are precisely the AC-acyclic complexes of injectives. In case $\cat{G}$ is a locally finite dimensionally type $FP_{\infty}$ category, we have a similar model structure, called the \textbf{exact AC-acyclic model structure}, whose fibrant objects instead are the exact AC-acyclic complexes of injectives. The homotopy category of the AC-acyclic model structure (resp. exact AC-acyclic model structure) is equivalent to the chain homotopy category of all (resp. exact) AC-acyclic complexes of injectives.
\end{theorem}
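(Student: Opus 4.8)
The plan is to follow the proofs of Theorems~\ref{them-injective model for abs clean} and~\ref{them-injective model for exact abs clean}. For each of the two model structures we produce an \emph{injective cotorsion pair} $(\class{V},\class{F})$ on $\cha{G}$ whose right class $\class{F}$ is exactly the asserted class of fibrant objects, invoke \cite[Proposition~3.3]{bravo-gillespie-hovey} to obtain the abelian model structure, and then identify its homotopy category. The crux is to exhibit a \emph{set} $\class{S}$ of objects of $\cha{G}$ with $\rightperp{\class{S}}=\class{F}$, so that \cite[Corollary~2.14(2)]{saorin-stovicek} applies.

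For the AC-acyclic model structure, fix a generating set $\{G_i\}$ of $\cat{G}$ and, using Proposition~\ref{prop-abs-clean-deconstructible}, a set $\class{S}_0$ of absolutely clean objects with the property that every absolutely clean object is a transfinite extension of members of $\class{S}_0$. Put
$$\class{S}=\{\,D^n(G_i/C)\,\}\cup\{\,S^n(A)\,\},$$
where $n$ ranges over $\Z$, $C$ over the subobjects $C\subseteq G_i$ (a set, by well-poweredness), and $A$ over $\class{S}_0$; this is a set. The claim is that $\rightperp{\class{S}}$ equals the class $\class{F}$ of AC-acyclic complexes of injectives. By the isomorphism $\Ext^1_{\cha{G}}(D^n(A),Y)\cong\Ext^1_{\cat{G}}(A,Y_n)$ of Example~\ref{example-chain complexes} together with Baer's criterion, $I$ is orthogonal to all the $D^n(G_i/C)$ precisely when each $I_n$ is injective. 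For such an $I$, applying $\Hom_{\cha{G}}(-,I)$ to $0\to S^{n-1}(A)\to D^n(A)\to S^n(A)\to 0$ and using $\Hom_{\cha{G}}(D^n(A),I)\cong\Hom_{\cat{G}}(A,I_n)$, $\Hom_{\cha{G}}(S^{n-1}(A),I)\cong\Hom_{\cat{G}}(A,Z_{n-1}I)$, and $\Ext^1_{\cha{G}}(D^n(A),I)\cong\Ext^1_{\cat{G}}(A,I_n)=0$ yields $\Ext^1_{\cha{G}}(S^n(A),I)\cong\cok(\Hom_{\cat{G}}(A,I_n)\to\Hom_{\cat{G}}(A,Z_{n-1}I))$, and this cokernel vanishes exactly when the complex $\Hom_{\cat{G}}(A,I)$ is exact at degree $n-1$. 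Since $S^n$ is exact and so takes transfinite extensions to transfinite extensions, Eklof's lemma promotes orthogonality to $\{S^n(A):A\in\class{S}_0\}$ to orthogonality to $\{S^n(A):A\in\class{AC}\}$; hence $I\in\rightperp{\class{S}}$ iff $I$ is a complex of injectives with $\Hom_{\cat{G}}(A,I)$ exact for all $A\in\class{AC}$, i.e.\ $I\in\class{F}$.

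Now \cite[Corollary~2.14(2)]{saorin-stovicek} makes $(\leftperp{\class{F}},\class{F})$ a (small) complete cotorsion pair; set $\class{V}=\leftperp{\class{F}}$. Exactly as in Theorems~\ref{them-Inj model structure} and~\ref{them-injective model for abs clean}, since $\class{F}$ consists of complexes of injectives and is closed under suspension, $\class{V}$ is the class of all $W$ with $\homcomplex(W,I)$ exact for every $I\in\class{F}$; this class is closed under retracts, and the two-out-of-three property follows from the long exact homology sequence of the Hom-complexes, which are short exact because the $I_n$ are injective, so $\class{V}$ is thick; it contains every injective complex since those are contractible. Thus $(\class{V},\class{F})$ is an injective cotorsion pair, and \cite[Proposition~3.3]{bravo-gillespie-hovey} produces the cofibrantly generated abelian model structure in which all objects are cofibrant, the fibrant objects are $\class{F}$, and the trivially fibrant objects are $\class{V}\cap\class{F}=\tilclass{I}$. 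The core is $\tilclass{I}$, so by \cite[Corollary~4.8]{gillespie-exact model structures} two maps are homotopic iff their difference factors through an injective complex; injective complexes are contractible, so this implies chain homotopic, and conversely every nullhomotopic map out of a complex of injectives factors through the mapping cone of its identity, which has injective components and hence (being contractible) is an injective complex. Therefore on the bifibrant objects the homotopy relation is ordinary chain homotopy, and the homotopy category is $K(\class{F})$, the chain homotopy category of all AC-acyclic complexes of injectives.

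Finally, assume $\cat{G}$ is locally finite dimensionally type $FP_{\infty}$ and take a generating set $\{G_i\}$ with each $G_i$ of finite projective dimension. Enlarge $\class{S}$ to $\class{S}'=\class{S}\cup\{\,S^n(G_i)\,\}$. As in Theorem~\ref{them-injective model for exact abs clean}, \cite[Lemma~4.5]{gillespie-degreewise-model-strucs} with Eklof's lemma gives $\rightperp{(\class{S}')}\subseteq\class{F}'$, the class of exact AC-acyclic complexes of injectives, while the finite projective dimension of the $G_i$ gives the reverse containment --- this is the only place that hypothesis enters --- so $\rightperp{(\class{S}')}=\class{F}'$. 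Since $\class{F}'$ is again a class of complexes of injectives, closed under suspension and containing the injective complexes, the argument of the previous two paragraphs applies with $\class{F}$ replaced by $\class{F}'$ and delivers the exact AC-acyclic model structure with homotopy category $K(\class{F}')$. The step I expect to be the main obstacle is the identification $\rightperp{\class{S}}=\class{F}$: one must express ``$\Hom_{\cat{G}}(A,I)$ is exact'' as an $\Ext^1_{\cha{G}}$-vanishing statement and then invoke deconstructibility of $\class{AC}$ to reduce it to a set, which is precisely what makes Eklof's lemma and \cite{saorin-stovicek} usable.
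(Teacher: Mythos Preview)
Your proposal is correct and follows essentially the same route as the paper's proof: the same cogenerating set $\class{S}=\{D^n(G_i/C)\}\cup\{S^n(A_\alpha)\}$ (and $\class{S}'=\class{S}\cup\{S^n(G_i)\}$ in the exact case), the same use of Eklof's lemma and \cite[Corollary~2.14(2)]{saorin-stovicek}, and the same thickness argument referencing the earlier injective model structures. You supply more detail than the paper does---notably the explicit computation identifying $\Ext^1_{\cha{G}}(S^n(A),I)$ with the relevant cokernel, and the cone-of-identity argument for the converse direction of the homotopy identification---but the structure and all key inputs are identical.
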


\begin{proof}
The proof of the AC-acyclic model structure is very similar to the proof of Theorem~\ref{them-injective model for abs clean}. We let $\{G_i\}$ be a generating set. Letting $C \hookrightarrow G_i$ range through all possible inclusions, set $\class{S} = \{D^n(G_i/C)\} \cup \{S^n(A_{\alpha})\}$. Here, $\{A_{\alpha}\}$ denotes a set of absolutely clean objects in $\cat{G}$ which generates all others as transfinite extensions, using Proposition~\ref{prop-abs-clean-deconstructible}. We let $\class{F}$ denote the class of all AC-acyclic complexes of injectives. Using Eklof's lemma, one can deduce that $\rightperp{\class{S}} = \class{F}$. Therefore, by~\cite[Corollary~2.14(2)]{saorin-stovicek}, we have that $(\leftperp{\class{F}},\class{F})$ is a (small) complete cotorsion pair. Setting $\class{W} = \leftperp{\class{F}}$ we see, as in the proof Theorem~\ref{them-injective model for abs clean}, that $\class{W}$ is thick and we get the AC-acyclic model structure as described.

In the case that $\cat{G}$ is locally finite dimensionally type $FP_{\infty}$, we instead take $\class{S}' = \class{S} \cup \{S^n(G_i)\}$. Here we are assuming the $G_i$ are generators with each of finite projective dimensions. Then  $\rightperp{\class{S}'}$ consists of all the AC-acyclic complexes $F$ for which $\Ext^1_{\cha{G}}(S^n(G_i),F) = 0$. The fact that $\{G_i\}$ is a generating set implies such complexes $F$ must be exact~\cite[Lemma~4.5]{gillespie-degreewise-model-strucs}.  The fact that each $G_i$ is of finite projective dimensions implies, by a ``dimension shifting'' argument, that $\rightperp{\class{S}'}$ consists precisely of the exact AC-acyclic complexes. The existence of the exact AC-acyclic model structure, with $\class{F}' = \rightperp{\class{S}'}$ as the class of fibrant objects, now follows.
\end{proof}

\section{The Gorenstein AC-injective model
structure on $\cat{G}$}\label{sec-Gorenstein-inj}

We wish to generalize the Gorenstein AC-injective model structure from~\cite{bravo-gillespie-hovey}, to Grothendieck categories.  In this section, we do this for the locally finite dimensionally type $FP_{\infty}$ categories of Section~\ref{subsec-generators of finite projective dimension}.

\begin{setup}
Throughout this entire section we will let $\cat{G}$ be a locally finite dimensionally type $FP_{\infty}$ category [Section~\ref{subsec-generators of finite projective dimension}]. We fix a corresponding generating set $\{G_i\} \subseteq FP_{\infty}(\cat{G})$ for which each $G_i$ is of finite projective dimension.
\end{setup}

The plan is to show that the Gorenstein AC-injective cotorsion pair is cogenerated by a set, and hence by~\cite[Corollary~2.14]{saorin-stovicek} is functorially complete and small in the sense of~\cite{hovey}. It follows from~\cite[Theorem~B]{gillespie-recollements} that we automatically have a model structure on $\cat{G}$, and it is cofibrantly generated since the cotorsion pair is small. We call this the \textbf{Gorenstein AC-injective model structure on $\cat{G}$} and it is in fact Quillen equivalent to the exact AC-acyclic model structure of Theorem~\ref{them-AC-acyclic model}.

\begin{definition}
An object $M \in \cat{G}$ is called \textbf{Gorenstein AC-injective} if
$M=Z_{0}I$ for some exact AC-acyclic complex $I$ of injectives.   When $\cat{G}$ is locally noetherian this coincides with the usual notion of \textbf{Gorenstein injective}, and when $\cat{G}$ is only known to be locally coherent we call them \textbf{Ding injective}. We let $\class{GI}$ denote the class of all Gorenstein AC-injectives in $\cat{G}$ and set $\class{W}=\leftperp{\class{GI}}$.
\end{definition}

We need some
lemmas relating $\class{GI}$ and $\class{W}$ to the exact AC-acyclic model structure of Theorem~\ref{them-AC-acyclic model}. We first observe the following important fact which is necessary for obtaining completeness of the Gorenstein AC-injective cotorsion pair via cogeneration by a set.

\begin{lemma}\label{lemma-generators}
The generators $\{G_i\}$ are contained in $\class{W}$.
\end{lemma}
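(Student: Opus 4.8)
The plan is to prove the equivalent statement that $\Ext^1_{\cat{G}}(G_i,M)=0$ for every Gorenstein AC-injective $M$, which is precisely the assertion $G_i\in\leftperp{\class{GI}}=\class{W}$. So I would fix $M\in\class{GI}$, write $M=Z_0I$ for an exact AC-acyclic complex of injectives $I=(\cdots\to I_1\to I_0\to I_{-1}\to\cdots)$, and note at the outset that only the exactness of $I$ together with the injectivity of its terms will actually be used here — the AC-acyclic hypothesis plays no role in this particular lemma, which is really just the classical fact that $Z_0$ of an exact complex of injectives is right-orthogonal to any object of finite projective dimension.

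The main step is a dimension-shifting computation. For each integer $k$, exactness of $I$ gives a short exact sequence $0\to Z_{k+1}I\to I_{k+1}\to Z_kI\to 0$; applying $\Hom_{\cat{G}}(G_i,-)$ and using that $I_{k+1}$ is injective, the associated long exact sequence yields isomorphisms $\Ext^m_{\cat{G}}(G_i,Z_kI)\cong\Ext^{m+1}_{\cat{G}}(G_i,Z_{k+1}I)$ for all $m\geq 1$. Splicing these from $k=0$ upward gives, for every $n\geq 0$, an isomorphism
\[
\Ext^1_{\cat{G}}(G_i,M)\;\cong\;\Ext^{n+1}_{\cat{G}}(G_i,Z_nI),
\]
and it then suffices to choose $n$ large enough that the right-hand side vanishes by the finite projective dimension of $G_i$.

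The one point needing a little care — and hence the likely hard part — is that the paper's definition of finite projective dimension only supplies, for each object $B$, a bound $n(B)$ with $\Ext^{\geq n(B)}_{\cat{G}}(G_i,B)=0$, whereas I want a single bound $d$ with $\Ext^{>d}_{\cat{G}}(G_i,-)=0$. I would upgrade to such a uniform bound by a standard product argument: if no such $d$ existed there would be objects $B_j$ and exponents $m_1<m_2<\cdots$ with $\Ext^{m_j}_{\cat{G}}(G_i,B_j)\neq 0$, and applying the hypothesis to $B=\prod_j B_j$ would give a contradiction, since in a Grothendieck category products of injectives are injective and products are exact, so $\Ext^m_{\cat{G}}(G_i,-)$ commutes with arbitrary products in the second variable. (Alternatively, one can just invoke the elementary theory of projective dimension in Grothendieck categories for this.) Granting the uniform bound $d$, taking $n=d$ in the displayed isomorphism gives $\Ext^1_{\cat{G}}(G_i,M)\cong\Ext^{d+1}_{\cat{G}}(G_i,Z_dI)=0$, and since $M$ was an arbitrary Gorenstein AC-injective, this shows $G_i\in\leftperp{\class{GI}}=\class{W}$.
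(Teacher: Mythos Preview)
Your core argument---dimension shift along the exact complex of injectives to identify $\Ext^1_{\cat{G}}(G_i,M)\cong\Ext^{n+1}_{\cat{G}}(G_i,Z_nI)$ and then kill the right-hand side for large $n$---is exactly the paper's proof. You are also right to flag the subtlety that the paper's literal definition of finite projective dimension is pointwise ($\forall B\,\exists n$), whereas the dimension-shifting argument needs a single $n$ working against the varying objects $Z_nI$; the paper simply writes ``for a large enough $n$'' and moves on.

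There is one genuine slip in your justification of the uniform bound. You assert that in a Grothendieck category ``products are exact,'' and deduce from this (together with products of injectives being injective) that $\Ext^m_{\cat{G}}(G_i,-)$ commutes with products. But Grothendieck's axiom AB5 does not imply AB4*: products need not be exact (e.g.\ in $\mathrm{Qco}(\mathbb{P}^1)$), so $\prod_j I_j^{\bullet}$ is not in general an injective resolution of $\prod_j B_j$. Fortunately your conclusion survives by a simpler observation that avoids exactness of products entirely: each $B_j$ is a \emph{retract} of $\prod_k B_k$ via the canonical inclusion and projection, so $\Ext^{m_j}_{\cat{G}}(G_i,B_j)$ is a retract of $\Ext^{m_j}_{\cat{G}}(G_i,\prod_k B_k)$. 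Thus $\Ext^{m_j}_{\cat{G}}(G_i,B_j)\neq 0$ forces $\Ext^{m_j}_{\cat{G}}(G_i,\prod_k B_k)\neq 0$, contradicting the pointwise bound for $B=\prod_k B_k$. With this repair your upgrade to a uniform bound is valid, and the rest of your proof goes through and in fact sharpens the paper's presentation.
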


\begin{proof}
Let $M \in \class{GI}$. We need to show that $\Ext^1(G_i,M) = 0$. But $M = Z_0I$ for some exact AC-acyclic complex of injectives $I$. So by dimension shifting we get $\Ext^1(G_i,Z_0I) \cong \Ext^n(G_i,Z_{n-1}I)$. So our assumption that $\{G_i\}$ is a set of generators of finite projective dimension implies that this Ext group vanishes for a large enough $n$.
\end{proof}

The above lemma is in fact an instance of the following.

\begin{lemma}\label{lem-spheres}
$W \in \class{W}$ if and only
if $S^{0}(W)$ is trivial in the exact AC-acyclic model structure.
\end{lemma}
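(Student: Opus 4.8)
The plan is to unwind both sides of the claimed equivalence using the description of the exact AC-acyclic model structure from Theorem~\ref{them-AC-acyclic model}, together with the standard adjunction between the functor $S^0(-)$ on $\cat{G}$ and the evaluation (cycle) functors on $\cha{G}$. Recall that in that model structure the class $\class{F}'$ of fibrant objects is the class of \emph{exact} AC-acyclic complexes of injectives, and the trivial objects are $\class{W}' = \leftperp{\class{F}'}$. So to say $S^0(W)$ is trivial is to say $\Ext^1_{\cha{G}}(S^0(W), F) = 0$ for every exact AC-acyclic complex of injectives $F$. Thus the whole statement reduces to the assertion
$$\Ext^1_{\cat{G}}(W,M) = 0 \ \text{ for all } M \in \class{GI} \quad \Longleftrightarrow \quad \Ext^1_{\cha{G}}(S^0(W),F) = 0 \ \text{ for all } F \in \class{F}'.$$

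The main tool is the natural isomorphism $\Ext^1_{\cha{G}}(S^0(W), F) \cong \Ext^1_{\cat{G}}(W, Z_0 F)$, valid because $F$ is an exact complex with injective components (so that a degreewise-split — equivalently, any — extension of $S^0(W)$ by $F$ is detected in degree $0$ on the cycle $Z_0 F$); this is the same kind of computation invoked in the proof of Theorem~\ref{them-injective model for abs clean} to identify $\rightperp{\tilclass{AC}}$, and it is essentially~\cite[Lemma~3.1]{gillespie-quasi-coherent}-style bookkeeping. Granting this, the forward direction is immediate: if $W \in \class{W}$ and $F \in \class{F}'$, then $Z_0 F \in \class{GI}$ by the very definition of Gorenstein AC-injective, so $\Ext^1_{\cat{G}}(W, Z_0 F) = 0$, hence $\Ext^1_{\cha{G}}(S^0(W), F) = 0$. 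For the converse, suppose $S^0(W)$ is trivial and let $M \in \class{GI}$; write $M = Z_0 I$ for some exact AC-acyclic complex of injectives $I$. Then $I \in \class{F}'$, so $\Ext^1_{\cha{G}}(S^0(W), I) = 0$, and the isomorphism above gives $\Ext^1_{\cat{G}}(W, M) = \Ext^1_{\cat{G}}(W, Z_0 I) = 0$. Since $M \in \class{GI}$ was arbitrary, $W \in \leftperp{\class{GI}} = \class{W}$.

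I expect the only real point requiring care is the naturality and validity of the displayed $\Ext^1$-isomorphism when $F$ is merely exact with injective terms rather than literally injective as a complex: one must check that every extension $0 \to F \to E \to S^0(W) \to 0$ in $\cha{G}$ is degreewise split (because $F_n$ is injective, the sequence splits in each degree $n \neq 0$ automatically, and in degree $0$ one uses that $W$ appears as a cokernel so the relevant degree-$0$ piece is an extension of $W$ by $Z_0F$), and that the assignment sending such an extension to the induced extension of $W$ by $Z_0 F$ is a well-defined, natural bijection with inverse given by pulling back/pushing out along the exactness data of $F$. This is routine but is the step where a sloppy argument could hide a gap; everything else is formal. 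Finally, Lemma~\ref{lemma-generators} is recovered as the special case $W = G_i$, since each $G_i$ has finite projective dimension and the dimension-shifting argument there is exactly the computation $\Ext^1(G_i, Z_0 I) \cong \Ext^n(G_i, Z_{n-1} I) = 0$ for $n$ large.
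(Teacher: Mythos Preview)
Your proof is correct and follows essentially the same approach as the paper. The paper's own proof is a one-liner: it simply cites the isomorphism $\Ext^{1}_{\cha{G}}(S^{0}W, X) \cong \Ext^{1}_{\cat{G}}(W, Z_{0}X)$ for any exact complex $X$ (from~\cite[Lemma~4.2]{gillespie-degreewise-model-strucs}), which is exactly the key identification you use, and then the equivalence is immediate since the Gorenstein AC-injectives are by definition the $Z_{0}F$ for $F \in \class{F}'$. Your version is just more expansive---you spell out both directions and sketch why the $\Ext$-isomorphism holds in the special case of injective components (which is all that is needed here), whereas the paper takes it as known in the greater generality of arbitrary exact complexes.
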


\begin{proof}
It follows from the fact that for any exact complex $X$, we have $\Ext^{1} (S^{0}W, X) = \Ext^{1} (W, Z_{0}X)$ by~\cite[Lemma~4.2]{gillespie-degreewise-model-strucs}.
\end{proof}

\begin{lemma}\label{lem-cycles-of-W}
Suppose $X$ is a complex with $H_{i}X=0$
for $i<0$ and $X_{i}$ absolutely clean for $i>0$.  Then $X$ is trivial in the
exact AC-acyclic model structure if and only if $Z_{0}X \in \class{W}$.
\end{lemma}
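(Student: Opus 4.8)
\emph{Proof proposal.} The plan is to reduce the statement to Lemma~\ref{lem-spheres} by comparing $X$ with the stalk complex $S^0(Z_0X)$. Write $\class{F}$ for the class of exact AC-acyclic complexes of injectives, i.e.\ the fibrant objects of the exact AC-acyclic model structure; as established in the proof of Theorem~\ref{them-AC-acyclic model}, the associated class of trivial objects is exactly $\leftperp{\class{F}}$, which is thick and, by Eklof's lemma, closed under transfinite extensions. Since $\class{F}$ is closed under shifts and $\class{GI}=\{Z_0I : I\in\class{F}\}$, Lemma~\ref{lem-spheres} says precisely that $Z_0X\in\class{W}$ if and only if $S^0(Z_0X)\in\leftperp{\class{F}}$. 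So it suffices to prove that $X\in\leftperp{\class{F}}$ if and only if $S^0(Z_0X)\in\leftperp{\class{F}}$.

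First I would record two triviality facts. (i) For every $C\in\cat{G}$ and $F\in\class{F}$ we have $\Ext^1_{\cha{G}}(D^n(C),F)\cong\Ext^1_{\cat{G}}(C,F_n)=0$, the components $F_n$ being injective; hence all disk complexes, and therefore all contractible complexes and, by peeling $D^0$ off the top and iterating, all bounded-above exact complexes, lie in $\leftperp{\class{F}}$. (ii) For every absolutely clean $A$ and every $F\in\class{F}$, the sphere complex $S^n(A)$ lies in $\leftperp{\class{F}}$: since $F$ has injective components, every extension of $S^n(A)$ by $F$ is degreewise split, so $\Ext^1_{\cha{G}}(S^n(A),F)$ is a homology group of $\homcomplex(S^0(A),F)=\Hom_{\cat{G}}(A,F)$, and this complex is exact precisely because $F$ is AC-acyclic.

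Now I would run the argument through two short exact sequences. Let $P:=(\cdots\to X_2\to X_1\to Z_0X\to 0)$ be the soft truncation $\tau_{\ge 0}X$, a subcomplex of $X$. The quotient $X/P$ is concentrated in degrees $\le 0$ and agrees with $X$ there except that its degree-$0$ term is $X_0/Z_0X$, which embeds into $X_{-1}$; together with $H_iX=0$ for $i<0$ this shows $X/P$ is a bounded-above exact complex, hence trivial by (i). Two-out-of-three for the thick class $\leftperp{\class{F}}$ applied to $0\to P\to X\to X/P\to 0$ gives $X\in\leftperp{\class{F}}$ if and only if $P\in\leftperp{\class{F}}$. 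Next, $S^0(Z_0X)$ is the degree-$0$ stalk subcomplex of $P$, with quotient $P/S^0(Z_0X)=(\cdots\to X_2\to X_1\to 0)$, a complex of absolutely clean objects concentrated in degrees $\ge 1$; its brutal truncations $\sigma_{\le n}$ exhibit it as a transfinite extension of the sphere complexes $S^n(X_n)$, $n\ge 1$, so it is trivial by (ii) and Eklof's lemma. Two-out-of-three applied to $0\to S^0(Z_0X)\to P\to P/S^0(Z_0X)\to 0$ then gives $P\in\leftperp{\class{F}}$ if and only if $S^0(Z_0X)\in\leftperp{\class{F}}$, and chaining these equivalences with Lemma~\ref{lem-spheres} finishes the proof.

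The main obstacle is fact (ii): it is the one place where the AC-acyclicity of the fibrant complexes is genuinely used, and it is exactly what makes the hypothesis ``$X_i$ absolutely clean for $i>0$'' do its work, since without it $P/S^0(Z_0X)$ need not be built from trivial spheres. Fact (i) and the truncation bookkeeping are routine, relying only on thickness and transfinite-extension-closure of $\leftperp{\class{F}}$.
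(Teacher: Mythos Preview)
Your proof is correct and follows essentially the same approach as the paper's: both filter $X$ via the soft truncation $P=\tau_{\ge 0}X$ and then via the subcomplex $S^0(Z_0X)\subseteq P$, reducing the question to Lemma~\ref{lem-spheres} after showing the two quotients are trivial. The only cosmetic difference is that the paper justifies the triviality of bounded-above exact complexes and of bounded-below complexes of absolutely clean objects by appealing directly to the cogenerating set $\class{S}'$ from the proof of Theorem~\ref{them-AC-acyclic model}, whereas you establish your facts (i) and (ii) from first principles via disk complexes and AC-acyclicity.
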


\begin{proof}
The proof in~\cite[Lemma~5.1]{bravo-gillespie-hovey} for $R$-modules works perfectly fine. However, we sketch an alternate proof. For this, observe that the trivial objects of the exact AC-acyclic model structure are precisely retracts of transfinite extensions of the set $\class{S}'$ in the proof of Theorem~\ref{them-AC-acyclic model}. In particular, this implies any bounded below complex of absolutely clean objects is trivial, and any bounded above exact complex is trivial. Now the given $X$ has a subcomplex $A \subseteq X$, where $A$ is the bounded below complex $\cdots \xrightarrow{} X_2 \xrightarrow{} X_1 \xrightarrow{} Z_0X \xrightarrow{} 0$. Then note that $X/A$ is (isomorphic to) the complex $0 \xrightarrow{} Z_{-1}X \xrightarrow{} X_{-1} \xrightarrow{} X_{-2} \xrightarrow{} \cdots$, which is bounded above and exact and so is trivial in the exact AC-acyclic model structure. Therefore, $X$ is trivial if and only if $A$ is trivial. But we have another subcomplex $S^0(Z_0X) \subseteq A$, whose quotient is a bounded below complex of absolutely clean objects. Thus $A$ (and hence $X$) is trivial if and only if  $S^0(Z_0X)$ is trivial. By Lemma~\ref{lem-spheres}, this happens if and only if $Z_0X \in \class{W}$.
\end{proof}

\begin{theorem}\label{thm-Gor-module}
There is an abelian model structure on $\cat{G}$, the \textbf{Gorenstein AC-injective model structure}, in which every object is cofibrant and the fibrant objects are the Gorenstein AC-injectives.
\end{theorem}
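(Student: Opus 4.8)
The plan is to exhibit the triple $(\cat{G},\class{W},\class{GI})$ as an abelian model structure (a Hovey triple), where $\class{W}=\leftperp{\class{GI}}$ is the class fixed above. Since the cofibrant class will be all of $\cat{G}$ and the fibrant class will be $\class{GI}$, this gives exactly the asserted model structure, and it will be cofibrantly generated once the relevant cotorsion pairs are seen to be small.

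The decisive and hardest step is to show that $(\class{W},\class{GI})$ is cogenerated by a set; we isolate this as Proposition~\ref{prop-cogenerated}. The strategy, following~\cite{bravo-gillespie-hovey}, is that every object of $\class{GI}$ is the $0$-cycle $Z_0I$ of an exact AC-acyclic complex $I$ of injectives, and that the class of such complexes is deconstructible: this combines Theorem~\ref{them-AC-acyclic model}, which realizes the exact AC-acyclic complexes of injectives as $\rightperp{\class{S}'}$ for an explicit set $\class{S}'$ (hence as the fibrant class of a cofibrantly generated model structure), with the deconstructibility of $\class{AC}$ from Proposition~\ref{prop-abs-clean-deconstructible}. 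A transfinite-filtration (Hill-lemma type) argument on such complexes then produces a regular cardinal $\gamma$ and a set $\class{S}$ of $\gamma$-presented Gorenstein AC-injectives with $\rightperp{\class{S}}=\class{GI}$. By~\cite[Corollary~2.14]{saorin-stovicek}, $(\class{W},\class{GI})=(\leftperp{(\rightperp{\class{S}})},\rightperp{\class{S}})$ is then a functorially complete and small cotorsion pair in the sense of~\cite{hovey}; in particular it is a genuine cotorsion pair, so $\class{GI}=\rightperp{\class{W}}$ is closed under retracts.

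Next I would record two formal properties of this cotorsion pair. First, it is hereditary: for $W\in\class{W}$ and $N=Z_0I\in\class{GI}$, the short exact sequences $0\to Z_nI\to I_n\to Z_{n-1}I\to 0$ have injective middle terms, so dimension shifting identifies $\Ext^{i+1}_{\cat{G}}(W,Z_nI)$ with $\Ext^i_{\cat{G}}(W,Z_{n-1}I)$ for $i\geq 1$; since every $Z_jI$ again lies in $\class{GI}$, an induction anchored at $\Ext^1_{\cat{G}}(W,-)|_{\class{GI}}=0$ yields $\Ext^i_{\cat{G}}(W,N)=0$ for all $i\geq 1$. Second, $\class{W}$ is thick: by Lemma~\ref{lem-spheres}, $W\in\class{W}$ if and only if $S^0(W)$ is trivial in the exact AC-acyclic model structure, so closure of $\class{W}$ under retracts and the two-out-of-three property is inherited, via the exact functor $S^0(-)$, from the corresponding properties of the (automatically thick) class of trivial objects of that model structure.

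Finally I would identify the core with the injectives. Every injective $E$ lies in $\class{W}$ trivially, and $E=Z_0D^1(E)$ with $D^1(E)$ a contractible complex of injectives, hence an exact AC-acyclic complex of injectives; so $\class{I}\subseteq\class{W}\cap\class{GI}$. Conversely, let $M\in\class{W}\cap\class{GI}$ and write $M=Z_0I$ with $I$ exact AC-acyclic of injectives, so that $0\to Z_1I\to I_1\to M\to 0$ is exact with $I_1$ injective and $Z_1I\in\class{GI}$; since $M\in\class{W}=\leftperp{\class{GI}}$ we get $\Ext^1_{\cat{G}}(M,Z_1I)=0$, the sequence splits, and $M$ is a retract of the injective $I_1$, hence injective. (Alternatively, Lemma~\ref{lem-cycles-of-W} applied to $I$ forces $I$ to be trivial in the exact AC-acyclic model structure, hence an injective complex, so $Z_0I$ is injective.) Thus $\class{W}\cap\class{GI}=\class{I}$. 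Now $(\class{W},\class{GI})$ and the trivial cotorsion pair $(\cat{G},\class{I})$ are complete hereditary cotorsion pairs with common core $\class{I}$ and with $\class{I}\subseteq\class{GI}$, so by~\cite{gillespie-hovey triples} (equivalently~\cite[Theorem~B]{gillespie-recollements}) there is a unique thick class $\class{W}'$ with $(\cat{G},\class{W}',\class{GI})$ a Hovey triple and $\cat{G}\cap\class{W}'=\class{W}$, $\class{W}'\cap\class{GI}=\class{I}$; the first equality forces $\class{W}'=\class{W}$. This Hovey triple is the Gorenstein AC-injective model structure: all objects are cofibrant, the fibrant objects are exactly $\class{GI}$, and it is cofibrantly generated since both cotorsion pairs are small. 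The sole genuine difficulty is the cogeneration-by-a-set input of the second paragraph; everything after it is routine manipulation of cotorsion pairs.
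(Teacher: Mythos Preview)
Your proposal has two genuine gaps, both centered on the second paragraph.

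First, the sketch of Proposition~\ref{prop-cogenerated} is confused about which class gets deconstructed. You say the exact AC-acyclic complexes of injectives are deconstructible and that a Hill-lemma argument yields ``a set $\class{S}$ of $\gamma$-presented Gorenstein AC-injectives with $\rightperp{\class{S}}=\class{GI}$.'' But to cogenerate $(\class{W},\class{GI})$ one needs $\class{S}\subseteq\class{W}$, not $\class{S}\subseteq\class{GI}$; a set of Gorenstein AC-injectives cannot satisfy $\rightperp{\class{S}}=\class{GI}$ unless $\class{GI}$ were Ext-self-orthogonal. Moreover, the exact AC-acyclic complexes of injectives are the \emph{right} side $\rightperp{\class{S}'}$ of a cotorsion pair in $\cha{G}$, and right sides are not generally deconstructible. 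The paper's actual argument deconstructs the \emph{left} side (the trivial objects of the exact AC-acyclic model) via Lemma~\ref{lem-stovicek}, and then transports this to $\class{W}$ through the equivalence of Lemma~\ref{lem-spheres}: subcomplexes of $S^0N$ are spheres, so a filtration of $S^0N$ by objects of $\class{T}$ descends to a filtration of $N$ by objects of a set $\class{S}\subseteq\class{W}$.

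Second, and more seriously, there is a logical circularity. Even granting a set $\class{S}$ with $\class{W}$ equal to the transfinite extensions of $\class{S}$, Eklof's lemma gives only $\rightperp{\class{S}}=\rightperp{\class{W}}$; you still must prove $\rightperp{\class{W}}\subseteq\class{GI}$. You assert this as a formal consequence (``in particular it is a genuine cotorsion pair, so $\class{GI}=\rightperp{\class{W}}$''), but it is precisely the nontrivial content of the theorem. In the paper this is handled \emph{before} cogeneration: completeness of $(\class{W}',\class{F}')$ in the exact AC-acyclic model yields, for any $M$, a sequence $0\to S^0M\to I\to X\to 0$ with $I$ fibrant and $X$ trivial; applying $Z_0$ and Lemma~\ref{lem-cycles-of-W} gives $0\to M\to Z_0I\to Z_0X\to 0$ with $Z_0I\in\class{GI}$ and $Z_0X\in\class{W}$, so the pair has enough injectives. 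Then for $M\in\rightperp{\class{W}}$ this sequence splits and $M$ is a retract of a Gorenstein AC-injective; the conclusion $M\in\class{GI}$ now requires Lemma~\ref{lemma-retracts} (closure under retracts via Holm's Eilenberg swindle), which you never invoke. Proposition~\ref{prop-cogenerated} in the paper is a \emph{corollary} of the theorem, not an input to it; reversing that order leaves exactly this step unproved.
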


\begin{proof}
As above, we take $\class{GI}$ to be the Gorenstein AC-injective objects,
and define $\class{W}=\leftperp{\class{GI}}$.  Then
Lemma~\ref{lem-spheres} shows that $\class{W}$ is thick and contains
the injectives.  Now for any object $M$, we have a short exact sequence
\[
0 \xrightarrow{} S^{0}M \xrightarrow{} I \xrightarrow{}X \xrightarrow{} 0
\]
in which $I$ is an exact AC-acyclic complex of injectives and $X$ is
trivial in the exact AC-acyclic model category.  By the snake lemma, we get a short exact sequence
\[
0 \xrightarrow{} M \xrightarrow{} Z_{0}I \xrightarrow{} Z_{0}X \xrightarrow{} 0
\]
Of course $Z_{0}I$ is Gorenstein AC-injective by definition, but $Z_{0}X$
is in $\class{W}$ as well by Lemma~\ref{lem-cycles-of-W}, since
$X_{i}$ is injective for all $i\neq 0$ and $H_{i}X=0$ for all $i\neq
1$.  So the purported cotorsion pair $(\class{W},\class{GI})$ has
enough injectives, if it is a
cotorsion pair.

Before showing $(\class{W},\class{GI})$ is indeed a cotorsion pair we note that it will also have enough projectives. The reason is that by Lemma~\ref{lemma-generators}, we have $\{G_i\} \subseteq \class{W}$, and so the argument of Salce applies. That is, given any $M$, find a short exact sequence
\[
0 \xrightarrow{} K \xrightarrow{} G \xrightarrow{} M \xrightarrow{} 0
\]
where $G \in \class{W}$ is a direct sum of copies of objects from $\{G_i\}$. Apply the fact that we have enough injectives to get a short exact sequence
\[
0 \xrightarrow{} K \xrightarrow{} I \xrightarrow{} W \xrightarrow{} 0
\]
where $I \in \class{GI}$ and $W \in \class{W}$. Then take the obvious pushout to finish arguing $(\class{W},\class{GI})$ has enough projectives.

But we still need to show that $\class{GI} \supseteq \rightperp{\class{W}}$,
so that we know $(\class{W},\class{GI})$ is in fact a cotorsion pair.
So suppose $M \in \rightperp{\class{W}}$. We can now find a short exact
sequence
\[
0 \xrightarrow{} M \xrightarrow{} J \xrightarrow{} W \xrightarrow{} 0
\]
where $J$ is Gorenstein AC-injective and $W \in \class{W}$.  By
assumption, this must split, and so $M$ is a retract of $J$. So everything hinges on whether or not the Gorenstein injectives are closed under retracts.
But we have Lemma~\ref{lemma-retracts} below.
\end{proof}

\begin{lemma}\label{lemma-retracts}
Again let $\class{GI}$ denote the class of Gorenstein AC-injectives.
\begin{enumerate}
\item $\class{GI}$ is closed under products.
\item $\class{GI}$ is injectively resolving in the sense of~\cite[Definition~1.1]{holm}.
\item $\class{GI}$ is closed under retracts.
\end{enumerate}
\end{lemma}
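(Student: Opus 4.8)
The plan is to establish the three assertions in the order $(1)$, then $(2)$, then $(3)$, with $(3)$ following formally from $(1)$ together with the ``cokernels of monomorphisms'' part of $(2)$. Two facts about $\class{GI}$ will be used throughout, both immediate from the definition and from the proof of Theorem~\ref{them-AC-acyclic model}. First: if $I$ is an exact AC-acyclic complex of injectives then so is every shift of $I$, so \emph{every} cycle $Z_nI$ of such a complex again lies in $\class{GI}$; in particular, each $M \in \class{GI}$ admits an epimorphism $E \twoheadrightarrow M$ with $E$ injective and $\ker(E \to M) \in \class{GI}$. Second: $\class{GI} \subseteq \rightperp{\class{AC}}$, and in fact $\Ext^i_{\cat{G}}(X,M) = 0$ for every $M \in \class{GI}$, every absolutely clean $X$, and every $i \geq 1$; this follows by splicing the short exact sequences $0 \to Z_{k+1}I \to I_{k+1} \to Z_kI \to 0$ and using that $\Hom_{\cat{G}}(X,I)$ is exact while each $I_{k+1}$ is injective. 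I will also use that $\class{AC}$ contains all injective objects (Proposition~\ref{prop-abs-clean-properties}(1)). Assertion $(1)$ is then immediate: by the proof of Theorem~\ref{them-AC-acyclic model} the class of exact AC-acyclic complexes of injectives is the right orthogonal $\rightperp{\class{S}}$ of a set $\class{S}$, hence closed under products; since products of injectives are injective and $Z_0$, being a kernel, commutes with products, $\prod_j Z_0I_j = Z_0(\prod_j I_j) \in \class{GI}$.

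For $(2)$ I verify the three conditions of \cite[Definition~1.1]{holm}. The class $\class{GI}$ contains the injectives: for $E$ injective, the contractible complex $D$ with $E$ in degrees $1$ and $0$, differential $1_E$, and zeros elsewhere is exact, has injective entries, satisfies $Z_0D = E$, and $\Hom_{\cat{G}}(X,D)$ is contractible (hence exact) for every $X$. For closure under extensions, let $0 \to A \to B \to C \to 0$ be exact with $A = Z_0I^A$, $C = Z_0I^C$, and $I^A,I^C$ exact AC-acyclic complexes of injectives. I build $I^B$ together with a degreewise-split short exact sequence of complexes $0 \to I^A \to I^B \to I^C \to 0$ inducing the given sequence on $Z_0$, by a two-sided horseshoe construction: on the coresolution (nonpositive-degree) halves the classical horseshoe applies; on the resolution halves the only obstructions are, at stage $n$, lifting a surjection $(I^C)_n \twoheadrightarrow Z_{n-1}I^C$ through an epimorphism with kernel $Z_{n-1}I^A$, and these are controlled by $\Ext^1_{\cat{G}}((I^C)_n, Z_{n-1}I^A)$, which vanishes since $(I^C)_n$ is injective, hence absolutely clean, while $Z_{n-1}I^A \in \class{GI} \subseteq \rightperp{\class{AC}}$. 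Now $(I^B)_n = (I^A)_n \oplus (I^C)_n$ is injective; $I^B$ is exact and $\Hom_{\cat{G}}(X,I^B)$ is exact for absolutely clean $X$, both because $I^B$ (resp. $\Hom_{\cat{G}}(X,I^B)$) is the middle term of a degreewise-split short exact sequence of complexes whose other terms $I^A,I^C$ (resp. $\Hom_{\cat{G}}(X,I^A),\Hom_{\cat{G}}(X,I^C)$) are exact; and $Z_0I^B = B$. Hence $B \in \class{GI}$.

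Closure under cokernels of monomorphisms is the most involved step. Let $0 \to A \to B \to C \to 0$ be exact with $A,B \in \class{GI}$. The long exact $\Ext$ sequence and the vanishing of $\Ext^i_{\cat{G}}(X,A)$ and $\Ext^i_{\cat{G}}(X,B)$ for $i \geq 1$ give $\Ext^i_{\cat{G}}(X,C) = 0$ for all $i \geq 1$ and all absolutely clean $X$; hence any injective coresolution $0 \to C \to E^0 \to E^1 \to \cdots$ stays exact under $\Hom_{\cat{G}}(X,-)$, and this is the nonpositive-degree part of the complex $I^C$ to be built. For the positive-degree part, write $B = Z_0I^B$ and take the surjection $(I^B)_1 \twoheadrightarrow B = Z_0I^B$ induced by the differential of $I^B$, whose kernel $Z_1I^B$ lies in $\class{GI}$; composing with $B \twoheadrightarrow C$ gives an epimorphism $(I^B)_1 \twoheadrightarrow C$ from an injective object whose kernel $K$ sits in a short exact sequence $0 \to Z_1I^B \to K \to A \to 0$, hence lies in $\class{GI}$ by the extension property just proved. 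Put $(I^C)_1 = (I^B)_1$ with this differential, so $Z_1I^C = K \in \class{GI}$, and continue inductively using the first displayed fact: given $Z_nI^C \in \class{GI}$, choose an epimorphism $(I^C)_{n+1} \twoheadrightarrow Z_nI^C$ from an injective object with kernel $Z_{n+1}I^C \in \class{GI}$. The resulting unbounded complex $I^C$ has injective entries, is exact, has $Z_0I^C = C$, and is $\Hom_{\cat{G}}(X,-)$-exact at every spot --- in nonpositive degrees because all higher $\Ext^i_{\cat{G}}(X,C)$ vanish, and in positive degrees (and at spot $0$) because each cycle $Z_nI^C$ with $n \geq 1$ lies in $\class{GI} \subseteq \rightperp{\class{AC}}$. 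Thus $C \in \class{GI}$.

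Finally, $(3)$ follows by the Eilenberg swindle (the dual of \cite[Proposition~1.4]{holm}): if $M \oplus N \in \class{GI}$, set $L = \prod_{\mathbb{N}}(M \oplus N) \in \class{GI}$ by $(1)$; regrouping the factors gives $L \cong M \oplus L'$ with $L' \cong L$, so the split short exact sequence $0 \to L' \to L \to M \to 0$ has its subobject and its total object in $\class{GI}$, whence $M \in \class{GI}$ by the cokernel-of-monomorphisms clause of $(2)$. I expect the main obstacle to be that cokernel-of-monomorphisms step, namely assembling, from witnessing complexes for $A$ and $B$, a witnessing complex for $C = B/A$ that is simultaneously exact, degreewise injective, and $\Hom_{\cat{G}}(\text{absolutely clean},-)$-exact; the crucial leverage is that every lifting obstruction that arises is an $\Ext^1_{\cat{G}}$ from an injective (hence absolutely clean) object into a member of $\class{GI} \subseteq \rightperp{\class{AC}}$, and therefore vanishes.
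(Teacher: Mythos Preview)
Your proof is correct and follows essentially the same route as the paper's: the paper proves (1) by noting the exact AC-acyclic complexes of injectives sit on the right of a cotorsion pair (you use the equivalent description as $\rightperp{\class{S}'}$ from Theorem~\ref{them-AC-acyclic model}), defers (2) to the arguments of Yang--Liu--Liang~\cite[Theorem~2.7]{Ding projective}, which are exactly the two-sided horseshoe and ``cover $C$ by $(I^B)_1$'' constructions you write out, and obtains (3) from Holm's Eilenberg swindle~\cite[Proposition~1.4]{holm}, which is precisely your product argument. In short, you have supplied the details the paper outsources, with no substantive deviation.
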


\begin{proof}
For (1), note that since $(\leftperp{\exclass{I}},\exclass{I})$ is a cotorsion pair by Theorem~\ref{them-exact Inj model structure}, we have that $\exclass{I}$ is closed under direct products. It follows that $\class{GI}$ is also closed under direct products.

For (2), we note that Yang, Liu, and Liang show in~\cite[Theorem~2.7]{Ding projective} that the class of all Ding injective $R$-modules is injectively resolving in the sense of~\cite[Definition~1.1]{holm}. It means that the class contains the injectives, is closed under extensions, and is closed under taking cokernels of monomorphisms. Although the proof they give is for $R$-modules, the elegant arguments hold in the same exact way to show $\class{GI}$ is injectively resolving.

Holm shows in~\cite[Proposition~1.4]{holm} that an Eilenberg swindle argument can be used to conclude (3) from both (1) and (2).
\end{proof}

\begin{corollary}\label{prop-cogenerated}
The cotorsion pair $(\cat{W}, \cat{GI})$, where
$\cat{GI}$ is the class of Gorenstein AC-injectives, is cogenerated by
a set and $\cat{W}$ contains the generating set $\{G_i\}$. Thus the Gorenstein AC-injective model structure is
cofibrantly generated.
\end{corollary}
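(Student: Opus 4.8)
The plan is to reduce everything to a single assertion: that $\class{W}=\leftperp{\class{GI}}$ is a \emph{deconstructible} class. Granting this, the proof is formal. Theorem~\ref{thm-Gor-module} together with Lemma~\ref{lemma-retracts} already shows that $(\class{W},\class{GI})$ is a complete cotorsion pair with $\class{W}$ thick and containing the injectives, and Lemma~\ref{lemma-generators} gives $\{G_i\}\subseteq\class{W}$. If moreover $\class{W}=\mathrm{Filt}(\class{C})$ for a set $\class{C}$, then, since $\class{W}$ is closed under direct summands (being the left half of a cotorsion pair), Eklof's lemma yields $\class{C}^{\perp}=\rightperp{\class{W}}=\class{GI}$; that is, the cotorsion pair is cogenerated by the set $\class{C}$. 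By \cite[Corollary~2.14]{saorin-stovicek} it is then functorially complete and small in the sense of \cite{hovey}, and since the ambient injective cotorsion pair $(\cat{G},\class{I})$ is small as well (Baer's criterion), the associated Gorenstein AC-injective model structure is cofibrantly generated. So the content is entirely in the deconstructibility of $\class{W}$.

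To establish that, I would pass through the exact AC-acyclic model structure of Theorem~\ref{them-AC-acyclic model}. Its trivial class, say $\class{W}'$, is $\leftperp{\class{F}'}=\leftperp{(\rightperp{\class{S}'})}$ for the set $\class{S}'=\{D^n(G_i/C)\}\cup\{S^n(A_{\alpha})\}\cup\{S^n(G_i)\}$ exhibited in the proof of that theorem; hence $\class{W}'$ is a deconstructible class in $\cha{G}$, and in particular (Hill lemma) for a suitable regular cardinal $\kappa$ --- large enough that $\cat{G}$ and $\cha{G}$ are locally $\kappa$-presentable and every object of $\class{S}'$ is $\kappa$-presentable --- every $X\in\class{W}'$ has the property that each $\kappa$-presentable subcomplex $Y\subseteq X$ is contained in a $\kappa$-presentable subcomplex $Y'\subseteq X$ with $Y'\in\class{W}'$ and $X/Y'\in\class{W}'$. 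See \cite{stovicek-exact models} for this circle of ideas.

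Now I would transfer this along the sphere functor $S^0\colon\cat{G}\to\cha{G}$. By Lemma~\ref{lem-spheres}, $W\in\class{W}$ iff $S^0W\in\class{W}'$. The functor $S^0$ is exact and fully faithful, its essential image consists exactly of the complexes concentrated in degree $0$ (a class closed under subobjects, quotients and extensions), and it sits inside adjunctions $\cok(\partial_1)\dashv S^0\dashv Z_0$; since $Z_0$ and $S^0$ commute with filtered colimits (filtered colimits are exact in a Grothendieck category), both $S^0$ and $\cok(\partial_1)$ preserve $\kappa$-presentable objects, so $S^0$ preserves and reflects $\kappa$-presentability. Given $W\in\class{W}$ and a $\kappa$-presentable subobject $V\subseteq W$, applying the Hill property of $\class{W}'$ to $S^0V\subseteq S^0W$ produces a $\kappa$-presentable subcomplex $S^0V\subseteq X'\subseteq S^0W$ with $X',\,S^0W/X'\in\class{W}'$; as every subcomplex of $S^0W$ is $S^0$ of a subobject of $W$, we may write $X'=S^0V'$ with $V\subseteq V'\subseteq W$, and then $V'$ is $\kappa$-presentable with $S^0V',\,S^0(W/V')\in\class{W}'$, i.e.\ $V',W/V'\in\class{W}$. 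This is precisely the Hill property for $\class{W}$, so $\class{W}$ is deconstructible, with a set of isomorphism representatives of the $\kappa$-presentable members of $\class{W}$ serving as deconstruction set (a set, by well-poweredness of $\cat{G}$ and essential smallness of the $\kappa$-presentables). Combined with the first paragraph, this finishes the proof, and $\{G_i\}\subseteq\class{W}$ is just Lemma~\ref{lemma-generators}.

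The main obstacle is the transfer in the last step: one must check that $\kappa$ can be chosen uniformly for the set-indexed data defining $\class{S}'$, that $S^0$ genuinely preserves and reflects $\kappa$-presentability, and --- the real point --- that the Hill-lemma refinement descends through $S^0$, which is exactly where the closure of the image of $S^0$ under subobjects and quotients is used. Everything after ``$\class{W}$ is deconstructible'' is bookkeeping with completeness, smallness and Hovey's correspondence.
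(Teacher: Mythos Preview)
Your overall strategy is the same as the paper's: reduce to deconstructibility of the trivial class $\class{W}'=\leftperp{(\rightperp{\class{S}'})}$ of the exact AC-acyclic model structure, then transfer this to $\class{W}$ through the embedding $S^0\colon\cat{G}\to\cha{G}$ using Lemma~\ref{lem-spheres} and the key fact that every subobject of a complex concentrated in degree~$0$ is again concentrated in degree~$0$.

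The paper's execution of the transfer, however, is considerably simpler than yours. Having chosen (via \cite[Proposition~2.9]{stovicek}, packaged as Lemma~\ref{lem-stovicek}) a set $\class{T}$ with $\class{W}'=\mathrm{Filt}(\class{T})$, the paper takes $N\in\class{W}$, fixes a $\class{T}$-filtration $(X_\alpha)$ of $S^0N$, and simply observes that each $X_\alpha\subseteq S^0N$ must be of the form $S^0M_\alpha$; the chain $(M_\alpha)$ then exhibits $N\in\mathrm{Filt}(\class{S})$ for the set $\class{S}=\{M:S^0M\in\class{T}\}$. No choice of $\kappa$, no discussion of $S^0$ preserving presentability, and no second invocation of the Hill lemma are needed. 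Your detour through ``Hill refinement property for $\class{W}'$ $\Rightarrow$ Hill refinement property for $\class{W}$ $\Rightarrow$ deconstructibility of $\class{W}$'' is correct in spirit but overworks the problem, and the last implication needs a word more than you give it: the bare refinement property you state does not by itself handle limit stages of the ensuing transfinite construction, since you do not know that $\class{W}$ is closed under the relevant directed colimits. You need the full Hill lattice (in particular its closure under directed unions), which does transfer through $S^0$---but at that point you are essentially just pushing the paper's one-line filtration argument through an unnecessary intermediate formalism.
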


We use the work of {\v{S}}{\v{t}}ov{\'{\i}}{\v{c}}ek~\cite{stovicek}
on deconstructibility.  The following lemma is not stated
explicitly in~\cite{stovicek}, so we prove it here, but it is implicit
there.

\begin{lemma}\label{lem-stovicek}
Let $\class{S}$ be any set of objects containing a family of generators for $\cat{G}$. Let $(\cat{A},\cat{B})$ be the cotorsion pair cogenerated by $\class{S}$. Then there exists a set $\class{T} \subseteq
\cat{A}$ such that every element of $\cat{A}$ is a transfinite
extension of objects of $\class{T}$.
\end{lemma}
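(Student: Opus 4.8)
The plan is to identify $\cat{A}$ with the class of direct summands of transfinite extensions of objects of $\class{S}$, and then to use the Hill lemma of \cite{stovicek} to ``absorb'' the direct summands, turning such a summand into a genuine transfinite extension of small objects of $\cat{A}$.

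First I would set up the identification. Write $\cat{B} = \rightperp{\class{S}}$, so that $\cat{A} = \leftperp{\cat{B}} = \leftperp{(\rightperp{\class{S}})}$; note $\class{S} \subseteq \cat{A}$, since $\Ext^1_{\cat{G}}(S,B) = 0$ for all $S \in \class{S}$ and $B \in \cat{B}$ by definition of $\cat{B}$. Because $\class{S}$ contains a generating set, $(\cat{A},\cat{B})$ is cogenerated by a set and hence functorially complete by \cite[Corollary~2.14]{saorin-stovicek}, and the standard Eklof--Trlifaj construction of special $\cat{A}$-precovers (cf.\ the proof of \cite[Corollary~2.14]{saorin-stovicek}, or \cite{hovey}) produces, for any $M$, a short exact sequence $0 \to B \to W \to M \to 0$ with $B \in \cat{B}$ and $W$ a transfinite extension of objects of $\class{S}$ (the generators supply a surjection onto $M$ to start, and each successive subquotient is a direct sum of objects of $\class{S}$). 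If $M \in \cat{A}$, then $\Ext^1_{\cat{G}}(M,B) = 0$ forces this sequence to split, so $M$ is a direct summand of $W$. Conversely, by Eklof's lemma every transfinite extension of objects of $\class{S}$ lies in $\cat{A}$, and $\cat{A}$ is closed under direct summands (being the left half of a cotorsion pair); hence $\cat{A}$ is precisely the class of direct summands of transfinite extensions of objects of $\class{S}$.

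Next I would fix a regular cardinal $\kappa$ such that every object of $\class{S}$ is $\kappa$-presented and $\cat{G}$ is locally $\kappa$-presentable, and let $\class{T}$ be a set of isomorphism representatives for the $\kappa$-presented objects that lie in $\cat{A}$; this is a set by \cite[Corollary~1.69]{adamek-rosicky} and well-poweredness of $\cat{G}$, and $\class{S} \subseteq \class{T} \subseteq \cat{A}$. To finish, take $A \in \cat{A}$ and write $W = A \oplus C$, where $W$ carries an $\class{S}$-filtration and $A, C$ are viewed as subobjects of $W$. Applying the Hill lemma \cite{stovicek} to this filtration yields a family $\class{H}$ of subobjects of $W$ that contains the filtration, is closed under arbitrary sums and intersections, has $P/N$ $\class{S}$-filtered whenever $N \subseteq P$ are members of $\class{H}$, and has the property that for every $N \in \class{H}$ and every $\kappa$-generated subobject $X \subseteq W$ there is some $N' \in \class{H}$ with $N + X \subseteq N'$ and $N'/N$ $\kappa$-presented. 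Using these closure properties one constructs by transfinite recursion a continuous chain $0 = L_0 \subseteq L_1 \subseteq \cdots$ of members of $\class{H}$ exhausting $W$ such that $L_\xi = (L_\xi \cap A) \oplus (L_\xi \cap C)$ for all $\xi$ and each $L_{\xi+1}/L_\xi$ is $\kappa$-presented; the key point at a successor step is to enlarge the current $L_\xi$ not only to absorb a new $\kappa$-generated subobject but also, iterating $\omega$ times, to close under the two projections $W \to A$ and $W \to C$, so that the splitting descends to $L_{\xi+1}$. Then $(L_\xi \cap A)_\xi$ is a continuous chain exhausting $A$, and its consecutive quotient $(L_{\xi+1} \cap A)/(L_\xi \cap A)$ is a direct summand of $L_{\xi+1}/L_\xi$; the latter is $\kappa$-presented and, being $\class{S}$-filtered, lies in $\cat{A}$ by Eklof's lemma, so the summand is a $\kappa$-presented object of $\cat{A}$, i.e.\ belongs to $\class{T}$. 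Thus $A$ is a transfinite extension of objects of $\class{T}$, as desired.

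I expect the main obstacle to be the recursive construction of this decomposition-compatible chain inside the Hill family: at each successor stage one must interleave enlargements within $A$ and within $C$ so that the splitting $W = A \oplus C$ restricts to $L_\xi$, keeping every enlargement $\kappa$-small, and one must verify that such a chain necessarily reaches $W$ (using well-poweredness of $\cat{G}$). This is the standard back-and-forth argument underlying, for instance, the deconstructibility of the class of projectives, and it is exactly what the Hill lemma of \cite{stovicek} is built to provide; everything else is routine bookkeeping with Eklof's lemma and with the closure of $\cat{A}$ under summands.
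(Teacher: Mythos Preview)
Your argument is correct and follows the same conceptual arc as the paper: first identify $\cat{A}$ with the class of direct summands of transfinite extensions of objects of $\class{S}$, then show that this class is again deconstructible. The difference is only in how the second step is executed. The paper does not carry out the Hill-lemma back-and-forth you sketch; it simply invokes the fact, proved by {\v{S}}{\v{t}}ov{\'{\i}}{\v{c}}ek in \cite[Proposition~2.9]{stovicek}, that the closure of a deconstructible class under direct summands is again deconstructible. Your second and third paragraphs are essentially a reproof of that proposition in the special case at hand. So your route is more self-contained and makes explicit where the set $\class{T}$ comes from (the $\kappa$-presented objects of $\cat{A}$), while the paper's route is a one-line citation; both rest on the same Hill-lemma machinery from \cite{stovicek}.
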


\begin{proof}
Let $\cat{D}$ be the class
of all transfinite extensions of objects of $\class{S}$.  By definition, this
is a deconstructible class in the sense of~\cite{stovicek}.  By~\cite[Corollary~2.14(2)]{saorin-stovicek}, $\class{A}$ is the class of all direct summands of $\cat{D}$. But
{\v{S}}{\v{t}}ov{\'{\i}}{\v{c}}ek~\cite[Proposition~2.9]{stovicek}
proves that this means that $\cat{A}$ is also deconstructible.  It means
there is a set $\class{T} \subseteq \cat{A}$ such that $\cat{A}$ is the class of all
transfinite extensions of $\cat{T}$.
\end{proof}

\begin{proof}[Proof of Proposition~\ref{prop-cogenerated}]
Going back to the proof of Theorem~\ref{them-AC-acyclic model}, we have an explicit set $S'$,  containing a family of generators for $\cha{G}$, for which $\rightperp{\class{S}'}$ is the class of exact AC-acyclic complexes. Thus Lemma~\ref{lem-stovicek} provides a set $\class{T}$ of complexes which generates via transfinite extensions all of the trivial objects in the exact AC-acyclic model structure.

Now let $\class{S}$ be the
collection of all objects $M$ such that $S^{0}M\in \class{T}$.  Then
$\class{T} \subseteq \cat{W}$ by Lemma~\ref{lem-spheres}. Also, if $N \in
\cat{W}$, then $S^{0}N$ is trivial in the exact AC-acyclic model, by the same lemma. So $S^0N$ must be a transfinite extension of objects of $\class{T}$.  However, each term
$X_{\alpha}$ in this transfinite extension is a subobject of
$S^{0}N$, so must be $S^{0}M_{\alpha}$ for some object $M_{\alpha}$.
It follows that $M$ is a transfinite extension of objects in $\class{S}$.
\end{proof}

\end{document}